\documentclass[11pt,a4paper]{article}
\usepackage{a4wide}
\usepackage{amsmath, amsthm, amssymb, amsfonts, thm-restate}
\usepackage{hyperref}
\usepackage{enumerate}
\usepackage{mdwlist}
\usepackage{xparse}
\usepackage{xspace}
\setlength{\marginparwidth}{2cm} 
\usepackage{cite}
\usepackage{todonotes}
\usepackage{tikz}
\usepackage{bbm}
\usepackage{pgf} 
\usepackage{graphicx}
\usepackage{euscript}
\usetikzlibrary{calc}
\usetikzlibrary{decorations.markings}
\usetikzlibrary{arrows.meta, shapes.misc, positioning,patterns,snakes}
\usepackage{microtype}
\usepackage{multirow}
\usepackage{makecell}
 \definecolor{darkgrey}{gray}{0.35}
\definecolor{grey}{gray}{0.86}
\definecolor{lightgrey}{gray}{0.91}
\definecolor{ballblue}{rgb}{0, 0.5,0.5}
\definecolor{lightballblue}{rgb}{0, 0.8,0.8}
\definecolor{dbblue}{rgb}{0, 0.4,0.4}
\usepackage{caption}
\captionsetup{margin={15pt},parskip=10pt,format=plain}
\usepackage[normalem]{ulem}
\normalem

\newcounter{constant} 
\newcommand{\newconstant}[1]{\refstepcounter{constant}\label{#1}} 
\newcommand{\useconstant}[1]{C_{\ref{#1}}}

\newcommand{\cost}[1]{\mathcal{C}(#1)}

\newtheorem{theorem}{Theorem}[section]
\newtheorem{lemma}[theorem]{Lemma}
\newtheorem{claim}[theorem]{Claim}
\newtheorem{proposition}[theorem]{Proposition}

\newtheorem{corollary}[theorem]{Corollary}

\newtheorem{definition}[theorem]{Definition}
\theoremstyle{definition}
\newtheorem{remark}[theorem]{Remark}
\newtheorem{assumption}[theorem]{Assumption}
\numberwithin{equation}{section}

\newcommand{\N}{\mathbb{N}}
\newcommand{\Z}{\mathbb{Z}}

\newcommand{\R}{\mathbb{R}}

\newcommand{\pr}{\mathbb{P}}
\newcommand{\E}{\mathbb{E}}

\newcommand{\vol}{\textnormal{Vol}}

\newcommand{\eps}{\varepsilon}

\newcommand{\mpar}{\textnormal{\texttt{par}}\xspace}
\newcommand{\lls}{{\,\ll_{\star}\,}}
\newcommand{\ggs}{{\,\gg_{\star}\,}}

\newcommand{\calA}{\mathcal{A}}
\newcommand{\calB}{\mathcal{B}}
\newcommand{\calC}{\mathcal{C}}

\newcommand{\calE}{\mathcal{E}}
\newcommand{\calF}{\mathcal{F}}

\newcommand{\calH}{\mathcal{H}}

\newcommand{\calS}{\mathcal{S}}

\newcommand{\calV}{\mathcal{V}}
\newcommand{\calW}{\mathcal{W}}
\newcommand{\calX}{\mathcal{X}}

\tikzset{cross/.style={cross out, draw=black, minimum size=2*(#1-\pgflinewidth), inner sep=0pt, outer sep=0pt},
cross/.default={2pt}}

\title{Polynomial growth in degree-dependent first passage percolation on spatial random graphs}

\author{J{\'u}lia Komj{\'a}thy\thanks{Delft University of Technology, j.komjathy@tudelft.nl}, John Lapinskas\thanks{University of Bristol, john.lapinskas@bristol.ac.uk}, Johannes Lengler\thanks{ETH Z{\"u}rich, johannes.lengler@inf.ethz.ch}, Ulysse Schaller\thanks{ETH Z{\"u}rich, ulysse.schaller@inf.ethz.ch U.S. was supported by the Swiss National Science Foundation [grant number 200021\_192079].}}

\begin{document}

\maketitle

\begin{abstract}
In this paper we study a version of (non-Markovian) first passage percolation on graphs, where the transmission time between two connected vertices is non-iid, but increases by a \emph{penalty factor} polynomial in their expected degrees. Based on the exponent of the penalty-polynomial, this makes it increasingly harder to transmit to and from high-degree vertices. This choice is motivated by awareness or time-limitations. For the iid part of the transmission times we allow any nonnegative distribution with regularly varying behaviour at $0$.
For the underlying graph models we choose spatial random graphs that have power-law degree distributions, so that the effect of the penalisation becomes visible: (finite and infinite) Geometric Inhomogeneous Random Graphs, and Scale-Free Percolation. In these spatial models, the connection probability between two vertices depends on their spatial distance and on their expected degrees. 
We prove that upon increasing the penalty exponent, the transmission time between two far away vertices $x,y$ sweeps through four universal phases even for a single underlying graph: 
explosive (tight transmission times), polylogarithmic, \emph{polynomial but sublinear} ($|x-y|^{\eta_0+o(1)}$ for an explicit $\eta_0<1$), and \emph{linear} ($\Theta(|x-y|)$) in their Euclidean distance. Further, none of these phases are restricted to phase boundaries, and those are non-trivial in the main model parameters:  the tail of the degree-distribution, a long-range parameter, and the exponent of regular variation of the iid part of the transmission times.
In this paper we present proofs of lower bounds for the latter two phases and the upper bound for the linear phase. 
These complement the matching upper bounds for the polynomial regime in our companion paper. 
\end{abstract}

\section{Introduction}\label{sec:intro}
First passage percolation (FPP) is a natural way to understand geodesics in random metric spaces.
In this paper, we investigate a natural extension of Markovian and non-Markovian first passage percolation, where the transmission time of the first passage percolation process across an edge depends on its direct surroundings in the underlying graph, in particular on the expected degrees of the sending and receiving vertex \cite{komjathy2020stopping}. We set the transmission time through each edge $uv$ as $L_{uv} (W_u W_v)^\mu$, where $L_{uv}$ is a non-negative iid factor with distribution that varies regularly near $0$ with exponent $\beta$, $W_u, W_v$ are constant multiples of the expected degrees of the vertices $u,v$, and $\mu$ is the penalty exponent. We then let the process run according to the standard rules of first passage percolation: transmission time between $x$ and $y$ is the the total sum of transmission time on edges on a path between $x,y$, minimised over all paths between $x$ and $y$.
We abbreviate this process as $1$-FPP for short. For the underlying graph models we choose spatial models with power-law degree distributions, i.e., with highly varying degrees, so that the dependence of the transmissions on the local surroundings causes non-negligible effects. The dependence caused via setting $\mu\ge 0$ is so that the transmission time from and towards high-degree vertices is slowed down by a polynomial of their expected degrees, which makes it harder, but not impossible, to transmit to and from ``superspreaders''.
The 1-FPP model is inspired by degree-dependent bond percolation~\cite{hooyberghs2010biased} and by topology-biased random walks~\cite{bonaventura2014characteristic, ding2018centrality, lee2009centrality,pu2015epidemic,zlatic2010topologically}, in which the transition probabilities from a vertex depend on the degrees of its neighbours.  Those works also assume a polynomial dependence on the degrees. 

This new one-dependent FPP process has several interesting features. It reveals topological features of the underlying graphs hidden from classical versions. Classical FPP tend to strongly depend on the highest degree vertices and shows the `explosion' phenomenon on spatial graphs with highly varying degrees for \emph{all} regularly varying transmission-time distributions, i.e., the process reaches infinitely many vertices in finite time \cite{komjathy2020explosion}. However, with $1$-FPP  explosion can be stopped efficiently by increasing the exponent $\mu$ of the penalty polynomial above a given value~\cite{komjathy2020stopping}, thus, only changing the dynamics of the process to/from outlier vertices but keeping the underlying graph fully intact. We study what happens in the sub-explosive regime. In a sequence of papers we prove that as the penalty exponent increases, the transmission time between two far away vertices $x,y$ in the infinite component sweeps through four universal phases:
\begin{enumerate}
\item [(i)] it converges to a limiting a.s. finite random variable, i.e., it grows \emph{explosively}. 
\item[(ii)] it grows \emph{polylogarithmically} with the Euclidean distance as $\sim (\log |x-y|)^{\Delta_0+o(1)}$ for some $\Delta_0\ge 1$);
\end{enumerate}
Phase (i) was our previous work \cite{komjathy2020stopping}. We treat phase (ii) and the upper bound for phase (iii) in the companion paper \cite{komjathy2022one1}. The focus of this paper is to prove the \emph{lower bounds} to the following phases (iii)-(iv); and the \emph{upper bound to phase (iv)}: 
\begin{enumerate}
\item[(iii)] it grows \emph{polynomially} as $|x-y|^{\eta_0+o(1)}$ for an explicitly given $\eta_0<1$;
\item[(iv)] it grows \emph{linearly} proportionally to $|x-y|$.
\end{enumerate}
See also Table \ref{table:summary}. Possibly the most interesting phase here is (iii): sublinear polynomial transmission times imply polynomial intrinsic ball-growth \emph{faster than the dimension} of the underlying space. This is rare in spatial graph models, and shows that 1-FPP cannot simply be mapped and studied as a simpler process (like graph distances) on a spatial graph with  different parameters. Indeed, in this paper we show that in the polynomial regime \emph{all} long edges around $x,y$ carry polynomial transmission times in the distance they bridge; thus the shortest path between $x,y$ must contain edges of polynomial cost, resulting in the lower bound $|x-y|^{\eta_0-o(1)}$ with an explicit $\eta_0<1$. This is hand-in-hand with the constructive proof we give as a matching upper bound in \cite{komjathy2022one1}, thus our result in phase (iii) is sharp. The $o(1)$ additive factor in the exponent is due to our current quite general assumptions on the iid factor $L$ and on the connection probability, but could be possibly sharpened by imposing stronger assumptions there. 
\begin{table}[t]
\begin{center}
    \begin{tabular}{|c|c|c|}
    \hline
Graph param. & 1-FPP parameters &  Behaviour of 1-FPP transmission times \\
        \hline
        \hline
        \multirowcell{2}{\textbf{Weak decay:}\\ $\tau\in(2,3)$\\$\alpha \in(1,2)$} & 
            $\mu < \frac{3-\tau}{2\beta}$
            &
            \makecell{\textbf{Explosive}:\\ $d_{\calC}(0,x) = \Theta(1)$}\\
            \cline{2-3}
            &
            $\mu > \frac{3-\tau}{2\beta}$
            &
            \makecell{\textbf{Polylogarithmic}:\\ $d_{\calC}(0,x) = (\log|x|)^{\Delta_0+o(1)}, \Delta_0>1$}\\
        \hline\hline
        \multirowcell{4}{\textbf{Strong decay:}\\ $\tau\in(2,3)$\\$\alpha > 2$ } &
            $\mu < \frac{3-\tau}{2\beta}$
            &
            \makecell{\textbf{Explosive}:\\ $d_{\calC}(0,x) = \Theta(1)$ }\\
            \cline{2-3}
        &
            $\mu\in\big(\frac{3-\tau}{2\beta}, \frac{3-\tau}{\beta}\big)$
            &
            \makecell{\textbf{Polylogarithmic}:\\ $d_{\calC}(0,x) = (\log|x|)^{\Delta_0+o(1)}, \Delta_0
            >1$} \\
            \cline{2-3}
        &
            $\mu\in\big(\frac{3-\tau}{\beta}, \frac{3-\tau}{\min\{\beta, d(\alpha-2)\}} + \frac{1}{d}\big) $
            &
            \makecell{\textbf{Strict Polynomial}:\\ $d_{\calC}(0,x) = |x|^{\eta_0\pm o(1)}, \eta_0<1$} \\
            \cline{2-3}
        &
            $\mu > \frac{3-\tau}{\min\{\beta, d(\alpha-2)\}} + \frac{1}{d} $
            &
            \makecell{\textbf{Linear:}\\ $d\ge 2\ : \ \ d_{\calC}(0,x) = \Theta(|x|)$ \\
           $d= 1: \kappa |x| \le  d_{\calC}(0,x) \le |x|^{1+o(1)}$  } \\
              \hline\hline
       \makecell{\textbf{Non-scale-free:}\\ $\tau>3, \alpha>2$} & 
            $\pr(L>0)=1, \ \mu\ge 0$
            &
            \makecell{\textbf{Linear}:\\ $d_{\calC}(0,x) \ge \kappa |x|$}\\
            \hline
    \end{tabular}
\end{center}
     \caption{\normalfont \small{Summary of our results. In 1-FPP, the transmission time on edge $xy$ is $L_{xy} (W_xW_y)^\mu$ where $W_x, W_y$ are constant multiples of the expected degrees of the vertices $x,y$, and $L_{xy}$ is an iid random variable with a regularly varying distribution function near $0$ with exponent $\beta\in(0,\infty]$. The parameter $\tau\in(2,3)$ is the exponent of the power-law degree distribution, the underlying graph has doubly-logarithmic graph distances. The transmission time $d_\calC(0,x)$ between $0$ and a far vertex $x$ passes through four different phases as $\mu$,  the penalty exponent,  increases. When the long-range parameter $\alpha\in(1,2)$, long edges between low-degree vertices cause polylogarithmic transmission times, so increasing $\mu$ stops explosion but it has no further effect. When $\alpha>2$, these edges are absent from the graph. Here there are three sub-explosive phases. In this paper we focus on the polynomial phases: we prove the lower bound for the strict polynomial phase, (with an explicit $\eta_0$ in~\eqref{eq:eta_0}). In particular, all long edges near $0, x$ have polynomial transmission times in the distance they bridge. We also prove both the upper and lower bound for the linear distance regime, and the lower bound much more generally when $\tau>3$. All regimes hold on proper intervals for $\mu$.}}
\label{table:summary}
\end{table}
\vskip0.5em

In phase (iv), we prove that the transmission time between two vertices $x$ and $y$ is between $\kappa_1 |x-y|$ and $\kappa_2 |x-y|$ for two constants $\kappa_1,\kappa_2>0$. The lower bound is valid in all dimensions and the upper bound valid in dimension at least $2$. In dimension $1$ we prove an upper bound of $|x-y|^{1+o(1)}$ in the accompanying paper \cite{komjathy2022one1}. Nevertheless, near-linear distances in dimension $1$ are somewhat surprising, since generally long-range spatial models either do not percolate in dimension $1$ or show shorter distances \cite{biskup2004scaling, trapman2010growth, deijfen2013scale, schulman1983long}.

An important message of our results is that the change in the phases can be obtained by \emph{changing only the dynamics} of the process, (i.e., increasing the penalty exponent $\mu$), while keeping the underlying graph intact. This rich behaviour arises despite the fact that the underlying graphs have doubly-logarithmic graph distances.
\vskip0.5em

\textbf{Growth phases of FPP and graph distances in other models.}\label{paragraph:FPP-other-graphs}
By contrast, in other models studied so far the behaviour of intrinsic distances is less rich, and the strict polynomial \emph{phase (iii) is absent} or restricted only to phase transition boundaries.
Sparse spatial graphs with finite-variance degrees (e.g. percolation, long-range percolation, random geometric graphs etc.) generally show linear graph distances/transmission times (phase (iv)) when long edges are absent \cite{antal1996chemical, auffinger201750, penrose2003random, cox1981some}, or polylogarithmic distances (phase (ii)) when long edges are present \cite{biskup2004scaling, biskup2019sharp, hao2021graph}. 
In spatial models with infinite-variance degrees, classical FPP explodes for all edge transmission-time distributions $L_{xy}$ with regularly varying behaviour near $0$. For the process to be sub-explosive, $L$ has to be at least doubly-exponentially flat near $0$, and then transmission times are at most the same order as the doubly-logarithmic graph distances \cite{van2017explosion, komjathy2020explosion}; in particular, there is no analogue of phases (ii)--(iv). 
The same happens on non-spatial models with infinite variance degrees \cite{adriaans2018weighted, jorritsma2020weighted}.
On sparse \emph{non-spatial} graph models with finite-variance degrees, classical FPP universally shows Malthusian behaviour, i.e., exponential growth \cite{bhamidi2017universality}, (for any distribution $L>0$ almost surely). Transmission times between two uniformly chosen vertices are then logarithmic in the graph size (phase (ii)).
There is some work on one-dependent FPP on non-spatial graphs: the process either explodes \cite{slangen19}, with the same criterion for explosion as for spatial graphs in \cite{komjathy2020stopping}, or becomes Malthusian \cite{Fransson1720143}. So only phases (i) and (ii) can occur. 

We generally see that the strict polynomial phase is missing from models. So far, the only known graph model that exhibits this behaviour is long-range percolation on its phase boundary $\alpha=2$, which is recently proven to show polynomial distances \cite{baumler2023distances}.
Even among degenerate models (i.e., with a complete graph as the underlying graph), long-range first passage percolation \cite{chatterjee2016multiple} is the only other model where a similarly rich set of phases is proven to occur. Long-range FPP uses exponential transmission times that depend on the Euclidean distance of the edge. The Markov property and the fact that there is no fixed underlying graph makes proof techniques in \cite{chatterjee2016multiple} unavailable in the context of 1-FPP on fixed graphs. We refer the reader to the discussion in~\cite{komjathy2022one1} for details, and also for a thorough discussion of other related literature. 

Summarising, one-dependent FPP is the first process that displays a full interpolation between the four phases on a \emph{single non-degenerate graph model}. Moreover, the phase boundaries for one-dependent FPP depend non-trivially on the main model parameters: the degree power-law exponent $\tau$, the parameter $\alpha$ controlling the prevalence of long-range edges, and the behaviour of $L_{xy}$ near $0$ characterised by $\beta$, see Table \ref{table:summary} for our results. We leave the investigation of any phase-boundary cases in the parameter space, i.e., where $\mu$ takes the value that separates two phases of growth, for future work. 

\textbf{Motivation.}
Arguably, $1$-FPP with its variety of regimes reflects spreading processes on real networks better than simpler models (e.g., iid FPP), since the speed of dissemination in social networks can range from extremely fast (e.g., when news or memes spread through social media) to rather slow, geometry-dominated patterns (e.g., as diseases spread through regions and countries), with change only in the dynamics of the process but not (much) in the underlying network. 
 The choice of degree-dependent transmission times is not just motivated by the related work mentioned above, but comes directly from applications. Actual contacts do not scale linearly with network connectivity due to  limited time or awareness~\cite{feldman2017high}, and this type of penalty has been used to model the sublinear impact of superspreaders as a function of contacts~\cite{giuraniuc2006criticality, karsai2006nonequilibrium, miritello2013time}.  
 
\textbf{Two papers, two techniques.}
In our companion paper \cite{komjathy2022one1} we develop methods of a very different flavour that allow us to construct paths for the upper bounds for all sub-explosive phases at once, at the cost of the $o(1)$ factor in the exponent. This construction works in the quenched setting, i.e., with revealed vertex set, then uses a multi-scale argument to construct the polylogarithmic/polynomial path. We think that both proof techniques (the ones there and the ones here) deserve their own exposition, hence we present them as two separate papers.
 
\subsection{Main methodological contributions}\label{sec:outline}

For the lower bounds, we develop an (iterative) renormalisation method that allows us to treat both the strict polynomial as well as the linear regime at once. We take the renormalisation method from Berger~\cite{berger2004lower} as our starting point, who proved a linear lower bound for graph distances in long-range percolation. We have to overcome several issues that occur in 1-FPP but do not occur for graph distances. Firstly, in 1-FPP the edges carry transmission times, (or costs, for short), and costs are neither bounded from below by $1$, nor are independent, because costs out of a vertex carry the same penalty factor coming from the expected degree of the vertex. Secondly, the underlying graphs we consider contain an excessive amount of long edges on all scales, which is due to the presence of very high degree vertices or `hubs', that make graph distances doubly-logarithmic. Thirdly, the proof in \cite{berger2004lower} uses Kingman's subadditive ergodic theorem, while for 1-FPP this is unavailable (especially in the polynomial regime). 

Nevertheless, we are able to set up a renormalisation argument that takes all the above issues into account, in particular, we quantify that long edges \emph{do} exist when $\tau\in(2,3)$ (which is implied by the doubly-logarithmic graph distances), but these edges are typically \emph{very expensive}. Although we follow broadly the proof in~\cite{berger2004lower}, our formulation gives the lower bound directly for all vertices, while~\cite{berger2004lower} only showed it along a sequence of norms for the vertices, and then used Kingman's subadditive ergodic theorem to extend the result to all vertices. We avoid this by improving on the conditions in \cite[Lemma 2]{berger2004lower} for the 1-FPP, which allows us to extend the proof to non-linear regimes, in which Kingman's theorem is not applicable (see Proposition \ref{prop:path_in_good_block} and the proof on page \pageref{proof:linear_polynomial_lower_bound}). See Section \ref{sec:organisation} for a more detailed outline.

\emph{Proof outline for the upper bounds.}\label{proof:idea-upper}
The upper bound in the linear regime in dimension at least two is also proved via renormalisation, here however a single renormalisation step suffices. Namely, for a sufficiently large constant $M$, we introduce an auxiliary graph:  let $G_M=(\calV_M, \calE_M)$ be formed by vertices with weights in the interval $I_M:=[M,2M]$ and edges with edge-cost at most $M^{3\mu}$ among them. The restriction $2M$ on the weights of vertices guarantees that 1-FPP restricted to vertices with weight in $I_M$ have penalty at most $4^{\mu}M^{2\mu}$, and hence a typical edge among vertices in $\calV_M$ is also part of $\calE_M$. The auxiliary graph $G_M$ can be stochastically dominated from below by a random geometric graph, and when $\tau\in(2,3)$, its average degree increases with $M$. 
We then use a boxing argument to renormalise to a site-bond percolation $\omega^\star$, and use a result by Antal and Pisztora~\cite{antal1996chemical} that distances in the infinite component $\calC_\infty^\star$ of $\omega^\star$ scale linearly with the Euclidean distance. We then generalise a local-density result of Deuschel and Pisztora in~\cite{DP-percolation}, saying that the infinite cluster $\calC_\infty^\star$ of $\omega^\star$ comes near every vertex $z\in \Z^d$, to hold also for the infinite component of quite general models of random geometric graph with high edge density (see Definition \ref{def:dense-geometric}), and thus also for $G_M$, see Corollary \ref{cor:dense-subgraph}. These results may be of independent interest. 
The local density result allows us to connect the starting vertices $0,x$ to a nearby vertex in $G_M$ at low cost, without having to deal with dependencies between the selection of the path and the edge-costs.

When $\tau >3$, the edge-density in $G_M$ \emph{decreases} with $M$, which is why infinite geometric inhomogeneous random graphs (IGIRG) on $\R^d$ and scale-free percolation (SFP) on $\Z^d$ with $\tau >3$ are both non-robust under percolation, while with $\tau\in(2,3)$ they are robust. The upper bound thus does not hold for the $\tau>3$ case. 
A proof of linear distances for $\tau>3, \alpha>2$ assuming high edge-density could go along a similar reasoning as above, namely renormalisation. To prove it for arbitrary supercritical edge-density is nontrivial, since the path has to avoid high-degree vertices (because of the penalties in 1-FPP), which are exactly the vertices keeping the infinite component together. We leave this for future work.

For finite models in a box, we additionally need to ensure that the constructed paths stay inside the box of the graph. For this we prove that there exist linear distance paths between any sites $u$ and $v$ in the infinite component which deviate ``little'' from the straight line segment $S_{u_n,v_n}$ (this is implicitly present in \cite{antal1996chemical}, and we prove it for general models of random geometric graphs with high-density, see Lemma \ref{lem:dense-geometric-linear-paths}). Hence, for random vertices $u_n,v_n$ in the giant component of a geometric inhomogeneous random graph (GIRG), there is a.a.s.\ a cheap path $\pi_{u_n,v_n}$ from $u_n$ to $v_n$ in the corresponding IGIRG with small deviation. Since $u_n$ and $v_n$ are random, they are unlikely to be close to the boundary of the box, and hence $\pi_{u_n,v_n}$ is completely contained in the GIRG.

\subsection{Graph Models}\label{sec:graph_model}

We will consider undirected, simple graphs with vertex set $\calV \subseteq \R^d$. We use standard graph notation, which we summarise along with other common terminology in Section~\ref{sec:notation}. 

We consider three random graph models: \emph{Scale-Free Percolation} (SFP), \emph{Infinite Geometric Inhomogeneous Random Graphs} (IGIRG)\footnote{They have also been called EGIRG, where E stands for extended~\cite{komjathy2020explosion}.}, and (finite) \emph{Geometric Inhomogeneous Random Graphs} (GIRG). Since the latter model contains \emph{Hyperbolic Random Graphs} (HypRG) as special case, our results also hold for HypRG. The main difference between the models is the vertex set $\mathcal V$. For SFP, we use $\mathcal V := \mathbb Z^d$, where $d \in \mathbb{N}$. For IGIRG, $\mathcal V$ is given by a Poisson point process on $\mathbb R^d$ of intensity one with respect to the Lebesgue measure. The formal definition is:

\begin{definition}[SFP, IGIRG, GIRG]\label{def:girg}
Let $d\in \N$, $\tau >2$, $\alpha\in(1,\infty)$, and $\overline{c}>\underline{c}>0$. Let $\ell:[1,\infty)\rightarrow(0,\infty)$ be a slowly varying function, and let
 $h:\R^d\times[1,\infty)\times[1,\infty)\rightarrow[0,1]$ be a function satisfying
\begin{align}\label{eq:connection_prob}
	\underline{c}\cdot\min\left\{1,
	\dfrac{w_1w_2}{|x|^d}\right\}^{\alpha}
	\le h(x,w_1,w_2)\le \overline{c}\cdot\min\left\{1,
	\dfrac{w_1w_2}{|x|^d}\right\}^{\alpha}.
\end{align}
We call $d$ the \emph{dimension}, $\tau$ the \emph{power-law exponent}, $\alpha$ the \emph{long-range parameter}, and $h$ the \emph{connection probability}.

For SFP, set $\mathcal V := \mathbb Z^d$, for IGIRG, let $\mathcal V$ be given by a Poisson point process on $\mathbb R^d$ of intensity one.\footnote{If we take an IGIRG and rescale the underlying space $\mathbb R^d$ by a factor $\lambda$, then we obtain a random graph which satisfies all conditions of IGIRGs except that the density of the Poisson point process is $\lambda^{-d}$ instead of one. Thus it is no restriction to assume density one.} For each $x\in\mathcal V$, we draw a \emph{weight} $W_x$ independently from a probability distribution on $[1, \infty)$ satisfying
\begin{equation}\label{eq:power_law}
  F_W(w)=\mathbb{P}( W\le w)= 1-\frac{\ell(w)}{w^{\tau-1}}. 
\end{equation} 
We denote the weight vector $(W_x)_{x\in \mathcal V}$ by $\calW$.  Conditioned on $\mathcal V$ and $\mathcal W$, every edge $xy$ is present independently with probability $h(x-y,W_x,W_y)$. 

A finite GIRG is obtained by restricting an IGIRG to a cube $Q_n$ of volume $n$ centred at $0$.
\end{definition}

For finite GIRG models we are interested in the behaviour as $n\to \infty$. Definition \ref{def:girg} leads to a slightly less general model than those e.g.\ in~\cite{bringmann2019geometric} and~\cite{komjathy2020stopping}. There, the original definition had a different scaling of the geometric space vs connection probabilities. However, the resulting graphs are identical in distribution after rescaling, see~\cite{komjathy2020stopping} for a comparison. Finally,~\cite{bringmann2019geometric} considered the torus topology on the cube, identifying ``left'' and ``right'' boundaries, but this does not make a difference for our results. Next we define $1$-dependent FPP on these graphs.

\begin{definition}[1-dependent first passage percolation (1-FPP)]\label{def:1-FPP}
Let $G(\mathcal V, \mathcal E)$ be a graph where to each vertex $v\in \mathcal V$ there is an associated  vertex-weight $W_v$.  For each edge $xy\in \mathcal E$, draw an iid random variable $L_{xy}$ from distribution $L$, and set the \emph{transmission cost} of an edge $xy$ as 
\begin{equation}\label{eq:cost}
\cost{xy}:=L_{xy}(W_xW_y)^{\mu}    
\end{equation} for a fixed parameter $\mu>0$ called the \emph{penalty strength}. Setting $\cost{xy}$ defines a cost-distance $d_{\mathcal C}$ between the vertices (see  \eqref{eq:cost_distance} below), that we call $1$-dependent first passage percolation. We define the \emph{cost-ball} $B_r^{\mathcal C}(x)$ of radius $r\ge 0$ around a vertex~$x$ to be the set of all vertices to which there is a path of cost at most $r$ from $x$.
\end{definition}
We emphasize that in the case of SFP, IGIRG, GIRG, we use the \emph{same} vertex weights $(W_v)_{v\in \calV}$ for generating the graph as well as for determining the cost of the edges, while in general the definition would allow for different choices as well. 
We usually assume that the cumulative distribution function (cdf) $F_L:[0,\infty)\rightarrow[0,1]$ of $L$ satisfies the following (exceptions of this assumption will be made explicit when applicable):
\begin{assumption}\label{assu:L}
There exist constants $t_0,\,c_1,\,c_2,\,\beta>0$ such that
\begin{align}\label{eq:F_L-condition}
	c_1t^{\beta}\le F_L(t)\le c_2t^{\beta}\mbox{ for all }t\in[0,t_0].
\end{align}
\end{assumption}
 Without much effort, Assumption~\ref{assu:L} could be relaxed to $\lim_{x\to0} \log F_L(x)/\log x=\beta$, but having the stronger form in \eqref{eq:F_L-condition} increases the readability of the paper. For the same reason, we also exclude extensions to $\alpha = \infty$ in Definition \ref{def:girg} and $\beta = \infty$ in \eqref{eq:F_L-condition} from the main body of the paper, and discuss those separately in Section~\ref{sec:threshold}.

We will call the set of parameters 
\begin{equation}\label{eq:parameters}
\mpar := \{d, \tau, \alpha, \mu, \beta, \underline{c}, \overline{c}, c_1, c_2, t_0\}
\end{equation}
the \emph{model parameters}.  We generally take the standpoint that we consider $\mu$ to be the easiest parameter to change, e.g.\ by adjusting behaviour of individuals corresponding to high-degree vertices, increasing $\mu$ means gradually slowing down the spreading process around high-degree vertices. We also phrase our results from this point of view.

\subsection{Results}\label{sec:results}
We now present two phases (polynomial but sublinear, and linear) of the transmission times between two far away nodes. Without loss of generality we fix one of the endpoints as~$0\in \R^d$. For IGIRG and GIRG, this means that we condition on the vertex set $\calV$ containing $0$, i.e., we consider the resulting Palm distribution. Recall the power-law exponent $\tau$ of  vertex-weights from \eqref{eq:power_law}, the long-range parameter $\alpha$ from the connection probability in \eqref{eq:connection_prob}, and $\beta$ of the transmission times from Assumption \ref{assu:L}.
We assume $\tau\in(2,3)$, this ensures that the degree distribution has finite mean but infinite variance, and that graph-distance balls grow doubly-logarithmically \cite{bringmann2016average}.
In the companion paper~\cite{komjathy2022one1} and in \cite{komjathy2020stopping} we show that cost-distances are at most polylogarithmic if $\alpha <2$ or if $\mu < (3-\tau)/\beta$. Thus here we only consider the complementary case (neglecting phase boundaries) that $\alpha > 2$ and $\mu > (3-\tau)/\beta$. We first define the two values that will separate the phases of growth as $\mu$ changes:
\begin{align}\label{eq:mu_pol_log}
    \mu_{\log}:=\frac{3-\tau}{\beta}, \quad \mu_{\mathrm{pol}}:=
    \frac1d+\frac{3-\tau}{\min\{\beta, d(\alpha-2)\}}=:
    \max\{\mu_{\mathrm{\log}} + \tfrac1d, \mu_{\mathrm{pol},\alpha} \}
\end{align}
with $\mu_{\mathrm{pol},\alpha}:=\tfrac{1}{d}+\tfrac{3-\tau}{d(\alpha-2)}=\tfrac{\alpha-(\tau-1)}{d(\alpha-2)}$. These values are thus all positive when $\tau\in(2,3)$ and $\beta>0$, and finite under Assumption \ref{assu:L} since $\beta>0$. When $\alpha>2$ then $\mu_{\mathrm{pol},\alpha}$ is above $1/d$ as well. The values are also well-defined for $\tau=3$, see Theorem \ref{thm:linear_polynomial_lower_bound} below about results with $\tau\ge 3$.
Then we define the polynomial growth exponent. For $\alpha>2, \tau\in(2,3)$, for all $\mu > \mu_{\log}$, let
\begin{align}\label{eq:eta_0}
    \eta_0 = \eta_0(d, \alpha,\tau,\beta,\mu) := \begin{cases}
	1 & \mbox{ if $\mu>\mu_{\mathrm{pol}}$,}\\
	\min\left\{d(\mu-\mu_{\log}), \mu/\mu_{\mathrm{pol},\alpha}\right\} & \mbox{ if $\mu\le\mu_{\mathrm{pol}}$,}
	\end{cases}
\end{align}
and note that $\eta_0>0$ for all $\mu>\mu_{\log}$, and $\eta_0<1$ exactly when $\mu< \mu_{\mathrm{pol}}$ by \eqref{eq:mu_pol_log}. 
We present special cases when $\alpha = \infty$ or $\beta=\infty$ and extensions to $\tau\ge3$ in Section \ref{sec:threshold} below. 
\begin{theorem}[Polynomial Lower Bound]
\label{thm:polynomial_regime}
	Consider $1$-FPP on IGIRG, GIRG, or SFP of Definition \ref{def:girg} satisfying the assumptions given in \eqref{eq:power_law}--\eqref{eq:F_L-condition}.
	Assume that $\alpha>2$, $\tau\in(2,3)$, and $\mu>\mu_{\log}$. Then for any $\eps>0$ almost surely there exists $r >0$ (independent of $n$ in case of finite GIRG) such that 
	\begin{align*}
        \mbox{for all } x\in \calV \mbox{ with } |x|\ge r:	d_{\mathcal{C}}(0,x) \ge |x|^{\eta_0-\eps}.
	\end{align*}
\end{theorem}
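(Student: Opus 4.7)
The plan is to follow the authors' roadmap and adapt Berger's renormalisation argument \cite{berger2004lower} for linear lower bounds in long-range percolation, addressing three new difficulties specific to 1-FPP: edge costs are unbounded below because $F_L$ is regularly varying at $0$; the regime $\tau\in(2,3)$ forces the presence of arbitrarily heavy vertices, whose penalty factor $(W_uW_v)^\mu$ can make even long edges cheap; and Kingman's subadditive ergodic theorem is unavailable in the polynomial regime, so the bound must be obtained directly for every vertex $x$ rather than along a subsequence of norms.

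First I would tile $\R^d$ at dyadic scales and declare a cube at scale $k$ \emph{good} when (a) no vertex inside has weight exceeding a threshold $w_k$ chosen via \eqref{eq:power_law} so that this event is summably rare in $k$, and (b) no edge of length in the dyadic band $[2^k,2^{k+1}]$ meeting the cube has cost below a target $t_k = 2^{k(\eta_0-o(1))}$. To estimate (b) I would combine \eqref{eq:F_L-condition}, which gives $\pr(L_e\le t)\le c_2 t^\beta$, with \eqref{eq:connection_prob}, which bounds the edge probability between vertices of weights $w_1,w_2$ at Euclidean distance $\sim 2^k$ by $\overline c(w_1w_2/2^{kd})^\alpha$, and then integrate over positions inside the cube and over the weight law \eqref{eq:power_law}. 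The two terms in the definition \eqref{eq:eta_0} of $\eta_0$ should correspond to the two competing regimes of this optimisation: $d(\mu-\mu_{\log})$ is the binding constraint when the rarity of the high-weight vertices needed to cancel small $L_e$ dominates, while $\mu/\mu_{\mathrm{pol},\alpha}$ is the binding constraint when the edge-probability decay in \eqref{eq:connection_prob} dominates, and the minimum in \eqref{eq:eta_0} picks the stronger bound.

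Next I would apply Borel--Cantelli inside a polynomial window around the origin to conclude that almost surely only finitely many cubes are bad, and then use the authors' Proposition \ref{prop:path_in_good_block} in place of Kingman's theorem to upgrade a bound along a sequence of norms into a bound valid at every sufficiently far vertex $x$. For the assembly itself, any path $\pi$ from $0$ to $x$ has Euclidean length at least $|x|$, so partitioning it by the scale of the crossed cubes, each scale-$k$ traversal contributes at least $t_k$ to the cost unless the edge lies in a bad cube; the number of bad cubes is finite, so their total contribution is absorbed into the slack $|x|^{-\eps}$, and summing $t_k$ over the necessary traversals gives $d_{\mathcal C}(0,x)\ge |x|^{\eta_0-\eps}$.

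The hardest step I anticipate is handling the dependence between edges sharing a common high-weight endpoint: all edges out of $v$ inherit the same factor $W_v^\mu$, so their costs are positively correlated, breaking the independence that a naive union bound requires. The clean workaround is to condition on the full weight sequence $\calW$, perform the union bound over edges in the quenched setting (where costs become conditionally independent given $\calW$), and then integrate the resulting tail against the weight distribution \eqref{eq:power_law}; tuning $w_k$ jointly with $t_k$ is precisely what forces the exponent in \eqref{eq:eta_0}. A secondary nuisance is the finite GIRG model: the target vertex $x$ has norm $|x|\ge r$ with $r$ independent of $n$, so once $n$ is large enough the $|x|$-neighbourhood of $0$ lies safely inside $Q_n$ and boundary effects disappear, but this uniformity in $n$ is something I would need to track carefully through the renormalisation.
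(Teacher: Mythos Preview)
Your proposal captures the right ingredients --- first-moment bounds on long cheap edges, Borel--Cantelli, and an assembly step --- but it misses the recursive structure that makes the argument work, and the assembly you describe fails as written. The paper's notion of an $\eta$-good $k$-block (Definition~\ref{def:good_box}) is hierarchical: a $k$-block is good if (i) every edge of length $>A_{k-1}/100$ inside it and its $3^d$ half-translates has cost $\ge uA_k^\eta$, \emph{and} (ii) all but at most $3^d$ of its $(k-1)$-children are themselves good. Your single-scale condition (b) corresponds only to (i). The reason (ii) is essential is exactly the gap in your assembly paragraph: when you write ``each scale-$k$ traversal contributes at least $t_k$ to the cost unless the edge lies in a bad cube'', you implicitly assume the traversal uses an edge of length $\sim 2^k$. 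But a path can cross a scale-$k$ box using only edges of length $\ll 2^k$, and then condition (b) says nothing about its cost. The paper handles this by induction on $k$: if no long edge is used, Proposition~\ref{prop:path_in_good_block} cuts the path into sub-segments each lying in a good $(k-1)$-child (after discarding the $\le 9^d$ bad children across all translates) and applies the inductive hypothesis there; see Figure~\ref{fig:lower-bound-proof}. This recursion is the heart of the proof and cannot be replaced by a per-scale union bound.

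Two related points. You invoke Proposition~\ref{prop:path_in_good_block} as a black box, but that proposition is \emph{about} the recursively good blocks; its proof \emph{is} the inductive decomposition just described, so you cannot cite it under your non-recursive definition. And your Borel--Cantelli conclusion (``only finitely many cubes are bad'') is not what the paper proves: it shows instead that the single $k$-block $Q_k$ centred at the origin is good for all $k\ge k_1$ (Proposition~\ref{prop:large_box_good}), and the recursion inside that one block then controls all sub-blocks. The weight-threshold condition (a) you introduce is not used --- Lemma~\ref{lem:no_long_cheap_edge} integrates over all weights directly --- and the scales are $A_k=A_1(k!)^2$ rather than dyadic, which is what makes the recursion error factors $(1-k_0/k^2)$ multiply to a positive constant.
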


Note that $r$ is random, so it depends on the instance of IGIRG. For GIRG, ``independent of $n$'' means that for every $q >0$ there is $r_0 = r_0(q)$ independent of $n$ such that we can satisfy Theorem~\ref{thm:polynomial_regime} with an $r$ such that $\pr(r > r_0) < q$.\footnote{For fixed $q$, the $r_0$ in our proof grows like $\exp(c\cdot \log(1/q)\cdot \log\log(1/q))$ for a constant $c>0$, see Remark~\ref{rem:large_box_good}.} Note that the lower bound holds simultaneously for all vertices $x$ at distance at least $r$ from $0$. 

This means that we also obtain a geometric bound on the location of the cost-ball around $0$. However, the matching upper bounds in~\cite{komjathy2022one1} are not uniform over all $x$ with $\|x\|=r$, so we can only bound the cost-ball in one direction.
We rephrase Theorem \ref{thm:polynomial_regime} in terms of intrinsic ball growth.

\begin{corollary}\label{cor:ball-growth}
    Consider $1$-FPP on IGIRG, GIRG, or SFP of Definition \ref{def:girg} satisfying the assumptions given in \eqref{eq:power_law}--\eqref{eq:F_L-condition} and with $0\in \calV$.
	Assume that $\alpha>2$, $\tau\in(2,3)$, and $\mu>\mu_{\log}$. 
	Then for all $\eps>0$, almost surely there exists $r_0$ such that for all $r\ge r_0$,
	\begin{enumerate}[(a)]
	    \item $\calB_r^{\calC}(0) \subseteq \{x\in \R^d : |x| \le r^{1/\eta_0 +\eps}\}$, and	
	\item $|\calB_r^{\calC}(0)| \le r^{d/\eta_0 +\eps}$.
	\end{enumerate}
\end{corollary}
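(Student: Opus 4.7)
The plan is to derive both parts as essentially deterministic consequences of Theorem~\ref{thm:polynomial_regime}, combined with a routine volume estimate for the vertex set $\calV$. Fix $\eps>0$ and pick $\delta\in(0,\eta_0)$ small enough that
\[
\frac{1}{\eta_0-\delta}\le \frac{1}{\eta_0}+\frac{\eps}{2d},
\]
which is possible since $1/(\eta_0-\delta)\to 1/\eta_0$ as $\delta\to 0$. Apply Theorem~\ref{thm:polynomial_regime} with $\delta$ in place of $\eps$: almost surely there exists a (random) $r_1$ such that $d_{\calC}(0,x)\ge |x|^{\eta_0-\delta}$ for every $x\in\calV$ with $|x|\ge r_1$.

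For part~(a), suppose $x\in\calB_r^{\calC}(0)$. If $|x|\ge r_1$, then $|x|^{\eta_0-\delta}\le d_{\calC}(0,x)\le r$, which rearranges to $|x|\le r^{1/(\eta_0-\delta)}\le r^{1/\eta_0+\eps/(2d)}$. Otherwise $|x|<r_1$, and choosing $r_0\ge r_1^{\eta_0/(1+\eta_0\eps)}$ forces $r_1\le r^{1/\eta_0+\eps}$ for all $r\ge r_0$. Combining both cases gives the required inclusion, even with an exponent slightly better than $1/\eta_0+\eps$ (which leaves slack to handle the upgrade to $\eps$ rather than $\eps/(2d)$).

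For part~(b), the inclusion from~(a) with exponent $1/\eta_0+\eps/(2d)$ reduces the task to bounding $|\calV\cap B_R(0)|$ with $R:=r^{1/\eta_0+\eps/(2d)}$. For SFP on $\Z^d$ this is deterministically $O(R^d)=O(r^{d/\eta_0+\eps/2})$, and for large $r$ this is at most $r^{d/\eta_0+\eps}$. For IGIRG/GIRG, $\calV$ is a unit-intensity Poisson point process (conditioned on $0\in\calV$, by Palm theory the law of $\calV\setminus\{0\}$ is still a unit-intensity PPP). Let $N_k$ denote the number of points of $\calV$ in the Euclidean ball of integer radius $k$; then $\E[N_k]=c_dk^d$ with $c_d$ the unit ball volume, and a standard Chernoff bound gives $\pr(N_k\ge 2c_dk^d)\le \exp(-c\, k^d)$. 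These probabilities are summable, so by Borel--Cantelli almost surely $N_k\le 2c_dk^d$ for all sufficiently large $k$. Passing from radius $R$ to $\lceil R\rceil$ then yields $|\calV\cap B_R(0)|\le C R^d\le r^{d/\eta_0+\eps}$ for $r$ large.

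There is no real obstacle: both (a) and (b) are bookkeeping with the exponents from Theorem~\ref{thm:polynomial_regime} plus classical Poisson concentration. The only minor subtlety is that we must intersect two almost-sure events (the one from the theorem where $r_1<\infty$, and the one where Poisson counts concentrate), but both have full probability. For finite GIRG the argument goes through verbatim on the event that the quoted $r_1$ is independent of $n$ in the sense specified after Theorem~\ref{thm:polynomial_regime}, and the vertex-count bound is if anything easier since $|\calV|$ is itself controlled by Poisson concentration in the cube $Q_n$.
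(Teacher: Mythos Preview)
Your proof is correct and follows essentially the same approach as the paper, which only sketches the argument in one sentence: part~(a) is a direct rephrasing of Theorem~\ref{thm:polynomial_regime}, and part~(b) follows from~(a) together with concentration of the number of vertices in a Euclidean ball around its volume. You have simply filled in the bookkeeping details that the paper leaves implicit.
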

Part (a) of Corollary~\ref{cor:ball-growth} rephrases Theorem~\ref{thm:polynomial_regime}, and (b) follows from (a) when considering the fact that the number of vertices in a (Euclidean) ball of radius  $r^{1/\eta_0 +\eps}$ around $0$ concentrates around the volume of this ball (uniformly for all $r\ge r_0$), both for IGIRG and SFP.

To provide context, we cite the corresponding upper bound from~\cite{komjathy2022one1}. 
For $\tau \in (2,3)$, for all edge-densities, the infinite component exists, is unique, and has positive density, but not necessarily density one.\footnote{In SFP, there are choices of $h$ that enforce all nearest-neighbour edges to be present, so density one is possible, but not guaranteed.} Hence for an upper bound we need to condition on $0$ and $x$ both being in the infinite component. 
\begin{theorem}[Polynomial Upper Bound~\cite{komjathy2022one1}]
\label{thm:polynomial-upper}
		Consider $1$-FPP on IGIRG or SFP of Definition \ref{def:girg} satisfying the assumptions given in \eqref{eq:power_law}--\eqref{eq:F_L-condition} with $0\in\calV$. Assume that $\alpha>2$, $\tau\in(2,3)$, and $\mu>\mu_{\log}$. Let $\calC_\infty$ be the infinite component. Then for any $\eps>0$,
	\begin{align}\label{eq:polynomial-upper}
	\lim_{|x|\rightarrow\infty}
	\pr\left(d_{\mathcal{C}}(0,x)\le |x|^{\eta_0+\eps} \mid 0, x\in \calC_\infty\right)=1.
	\end{align}
\end{theorem}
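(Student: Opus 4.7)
The goal is to construct, with probability tending to one as $|x|\to\infty$ conditionally on $0,x \in \calC_\infty$, a path from $0$ to $x$ of total cost at most $|x|^{\eta_0 + \eps}$. Following the approach outlined in Section~\ref{sec:outline}, I would work in the quenched setting: first reveal the vertex set $\calV$ and the weight vector $\calW$, then analyse the remaining randomness (edge presence and iid factors $L_e$), which is independent across edges conditionally on $(\calV, \calW)$.

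For the backbone construction, fix auxiliary parameters $r^{*} \le |x|$ (jump length) and $W^{*} \ge 1$ (hub weight). Partition the straight segment from $0$ to $x$ into $|x|/r^*$ consecutive boxes of side $r^{*}$. In each box, by~\eqref{eq:power_law}, there are typically $(r^{*})^d (W^{*})^{-(\tau-1)}$ vertices of weight in $[W^{*}, 2W^{*}]$, so provided this count is large one such ``hub'' exists whp. Between hubs in neighbouring boxes the number $N_{\mathrm{cand}}$ of candidate edges is determined by~\eqref{eq:connection_prob}, and by~\eqref{eq:F_L-condition} the minimum of $N_{\mathrm{cand}}$ iid $L_e$'s scales like $N_{\mathrm{cand}}^{-1/\beta}$. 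Choosing this cheapest edge for each jump gives a per-jump backbone cost of the form $L_{\min} \cdot (W^{*})^{2\mu}$.

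Multiplying this per-jump cost by $|x|/r^{*}$ and optimising over admissible $(r^*, W^*)$ should yield precisely the exponent $\eta_0$. The two arguments of the minimum in~\eqref{eq:eta_0} correspond to which constraint is tight at the optimum: the $\mu/\mu_{\mathrm{pol},\alpha}$ bound is saturated when the optimal strategy uses very high-weight hubs with $(W^{*})^{2} \gtrsim (r^{*})^d$ (so that the connection probability is $\Theta(1)$), while the $d(\mu-\mu_{\log})$ bound is saturated when $W^{*}$ is pushed to the smallest value for which hubs still reliably occupy each box. Endpoint burn-in from $0$ and $x$ to the first and last hubs is handled separately, using $0,x\in\calC_\infty$ to guarantee a local exploration of $O(1)$ cost, since the condition only affects the proof near the endpoints.

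The main obstacle will be controlling dependencies tightly enough to recover $\eta_0$ exactly. Conditioning on $(\calV,\calW)$ decouples the $L_e$'s across edges, but the existence of hubs at each scale, the survival of backbone edges, and the identity of the minimum-$L_e$ edge all draw on the same underlying random vertices, so a careful box-by-box union bound — absorbed into the $\eps$ slack in the exponent — is required. A secondary difficulty is that the two regimes in~\eqref{eq:eta_0} may require separate case analyses, since the qualitatively different optimal $(r^*, W^*)$ in each case suggests distinct constructions near the boundary $d(\mu-\mu_{\log})=\mu/\mu_{\mathrm{pol},\alpha}$. Finally, the slowly varying prefactor $\ell$ in~\eqref{eq:power_law} contributes logarithmic correction factors throughout the calculation; these must be shown not to interfere with the leading-order exponent, and are also absorbed into the $o(1)$ in $\eta_0 + \eps$.
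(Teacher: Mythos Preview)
This theorem is not proved in the present paper --- it is quoted from the companion paper~\cite{komjathy2022one1} to put the lower bounds in context --- so there is no proof here to compare against line by line. All the paper says about that argument (Section~\ref{sec:outline} and the paragraph ``Two papers, two techniques'') is that the companion works in the quenched setting and then runs a \emph{multi-scale} construction to build the cheap path.

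Your quenched setup matches this, but your backbone is explicitly single-scale: one pair $(r^*,W^*)$ and a row of boxes of side $r^*$. This has a genuine gap. Carry out your own optimisation in the natural case $(W^*)^2 \asymp (r^*)^d$ (hub--hub edge probability $\Theta(1)$): the number of candidate edges per jump is $\asymp (r^*)^{d(3-\tau)}$, so the per-jump cost is of order $(r^*)^{d\mu}\cdot (r^*)^{-d(3-\tau)/\beta}=(r^*)^{d(\mu-\mu_{\log})}$ and the total is $|x|\cdot (r^*)^{d(\mu-\mu_{\log})-1}$. In the strictly polynomial regime $d(\mu-\mu_{\log})<1$ this is minimised only by pushing $r^*$ all the way to $|x|$. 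But then the ``first hub'' is a vertex of weight $\asymp |x|^{d/2}$ sitting anywhere in a box of side $|x|$ around the origin, and your ``endpoint burn-in from $0$'' to it is not an $O(1)$ local step --- it is a problem of the same type and the same spatial scale as the one you started with. Conversely, if you keep $r^*$ bounded so that the burn-in genuinely is $O(1)$, you are summing $\Theta(|x|)$ jumps of bounded cost and only recover a linear upper bound. The same circularity appears for the $\mu/\mu_{\mathrm{pol},\alpha}$ branch of $\eta_0$. This is exactly what forces the companion paper to go multi-scale: one connects $0$ to a nearby hub of a small weight class, that hub to one in the next weight class, and so on up a hierarchy; the cost at level $k$ scales like the level-$k$ box size raised to $\eta_0$, the geometric sum over levels gives $|x|^{\eta_0+o(1)}$, and the burn-in is absorbed into the lowest level rather than postponed to the end.
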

In~\cite{komjathy2022one1} we also prove a corresponding theorem for finite GIRGs that we omit here. 
Observe that Theorem~\ref{thm:polynomial-upper} does not hold \emph{simultaneously} for all vertices at a certain norm, rather, the convergence holds in probability. The reason for this is that we did not make any assumptions on the \emph{tail-behaviour} of the distribution of $L$, and hence we cannot exclude (clusters of) vertices with all edges having very large $L$ values so that these are reached much later, violating  the upper bound in \eqref{eq:polynomial-upper}. Hence Theorem~\ref{thm:polynomial-upper} is not strong enough to provide an analogous lower bound to Corollary~\ref{cor:ball-growth}(a), since for that we would need a statement over \emph{all} vertices in distance at most~$r$.
Yet, even though some vertices at norm $r$ might have long cost-distance to $0$, linearly many of them do satisfy $d_{\mathcal{C}}(0,x)\le |x|^{\eta_0+\eps}$, so we still obtain the following corollary when combining Theorems \ref{thm:polynomial_regime} and  \ref{thm:polynomial-upper}:
\begin{corollary}\label{cor:polynomial-total}
    Consider $1$-FPP on IGIRG or SFP of Definition \ref{def:girg} satisfying the assumptions given in \eqref{eq:power_law}--\eqref{eq:F_L-condition} with $0\in\calV$. Assume that $\alpha>2$, $\tau\in(2,3)$, and $\mu>\mu_{\log}$. Let $\calC_\infty$ be the infinite component. Then for any $\eps>0$, 
\begin{align}
\lim_{|x|\rightarrow\infty}
	\pr\left( \left|\frac{\log d_{\mathcal{C}}(0,x)}{\log |x|} - \eta_0\right| \le \eps \mid 0,x \in \calC_\infty \right)&=1, \label{eq:part-a-earlier}\\
 \lim_{r\rightarrow\infty}
	\pr\left(\left|\frac{\log |\calB_r^{\calC}(0)|}{\log r}- \frac{d}{\eta_0}\right| \le \eps \mid 0\in \calC_\infty \right)&=1. \label{eq:part-b-earlier}
\end{align}
\end{corollary}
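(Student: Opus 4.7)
Both displayed statements follow by combining Theorem~\ref{thm:polynomial_regime} (the a.s.\ lower bound on $d_\calC$) with Theorem~\ref{thm:polynomial-upper} (the in-probability upper bound); the only genuinely new ingredient is a matching lower bound on $|\calB_r^\calC(0)|$.

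\smallskip\noindent\emph{Statement \eqref{eq:part-a-earlier}.} The upper direction $d_\calC(0,x)\le|x|^{\eta_0+\eps}$ is immediate from Theorem~\ref{thm:polynomial-upper}. For the lower direction, Theorem~\ref{thm:polynomial_regime} gives an a.s.\ finite random radius $r=r(\calV,\calW,\calC)$ such that $d_\calC(0,x)\ge|x|^{\eta_0-\eps}$ for every $x\in\calV$ with $|x|\ge r$. I would upgrade this a.s.\ bound to the desired in-probability bound in the standard way: for any $\delta>0$ choose a deterministic $r_0(\delta)$ with $\pr(r>r_0)<\delta$; then for $|x|>r_0$ the lower bound holds unconditionally with probability $\ge 1-\delta$, and conditioning on the positive-probability event $\{0,x\in\calC_\infty\}$ divides this by a constant $\ge\theta^2$ with $\theta:=\pr(0\in\calC_\infty)>0$. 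Letting $|x|\to\infty$ and then $\delta\downarrow 0$ completes \eqref{eq:part-a-earlier}.

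\smallskip\noindent\emph{Statement \eqref{eq:part-b-earlier}.} The upper bound $|\calB_r^\calC(0)|\le r^{d/\eta_0+\eps}$ follows by the same recipe applied to Corollary~\ref{cor:ball-growth}(b), which is itself an a.s.\ consequence of Theorem~\ref{thm:polynomial_regime}. The matching lower bound is where the real work lies. Set $R:=r^{1/(\eta_0+\eps)}$, so that every vertex $x$ with $|x|\le R$ automatically satisfies $|x|^{\eta_0+\eps}\le r$, hence $x\in\calB_r^\calC(0)$ whenever $d_\calC(0,x)\le |x|^{\eta_0+\eps}$. Theorem~\ref{thm:polynomial-upper} together with translation invariance of IGIRG/SFP then yields, uniformly for $x$ in the annulus $R_0\le|x|\le R$ with $R_0$ fixed large,
\[\pr\bigl(d_\calC(0,x)>r\,\bigl|\,0,x\in\calC_\infty\bigr)=o_R(1).\]
Via the Campbell--Mecke formula (in the IGIRG case) or direct summation over $\Z^d$ (in SFP), the expected number of ``bad'' vertices $x\in\calV\cap B_R(0)\cap\calC_\infty$ with $d_\calC(0,x)>r$, given $\{0\in\calC_\infty\}$, is therefore $o(R^d)$. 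Markov's inequality makes this count $o(R^d)$ with conditional probability $\to 1$; combined with $|B_R(0)\cap\calC_\infty|=(\theta+o(1))\vol(B_R(0))=\Theta(R^d)$ (by ergodicity of $\calC_\infty$ under translations, or by Poisson concentration in the IGIRG case), this gives $|\calB_r^\calC(0)|\ge\Theta(R^d)=r^{d/\eta_0-O(\eps)}$ with probability $\to 1$. Letting $\eps\downarrow 0$ yields \eqref{eq:part-b-earlier}.

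\smallskip\noindent\textbf{Main obstacle.} The only delicate point is the \emph{uniformity} of Theorem~\ref{thm:polynomial-upper} over $x$ in the annulus $|x|\in[R_0,R]$: the statement as quoted is a pointwise limit in $|x|$, and I need a single $R_0=R_0(\eps)$ that works for all $x$ with $|x|\ge R_0$. Thanks to translation invariance of the underlying point process and connection rule, the probability depends only on $|x|$, so either the uniform version can be read off directly from the construction in~\cite{komjathy2022one1}, or a simple covering of the annulus by $O(R^d/R_0^d)$ translates of a fixed radius-$R_0$ ball reduces the claim to the pointwise statement.
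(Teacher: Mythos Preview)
Your proposal is correct and follows the same route as the paper: combine the almost-sure lower bound of Theorem~\ref{thm:polynomial_regime} with the in-probability upper bound of Theorem~\ref{thm:polynomial-upper}, and for \eqref{eq:part-b-earlier} argue that a constant fraction of vertices in $B_R(0)\cap\calC_\infty$ (with $R=r^{1/(\eta_0+\eps)}$) satisfy the cost bound, via an expectation/Markov argument. The paper's own justification is a single sentence (``a constant fraction of vertices at distance at most $r$ have cost-distance at most $r^{1/\eta_0+\eps}$''); you have simply spelled out the Campbell--Mecke/Markov step that makes this precise.

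One small correction to your ``main obstacle'' paragraph: the connection function $h(x,w_1,w_2)$ in Definition~\ref{def:girg} need not be rotationally invariant, so the probability in Theorem~\ref{thm:polynomial-upper} need not depend on $|x|$ alone. However, this does not matter: the limit $\lim_{|x|\to\infty}$ in \eqref{eq:polynomial-upper} already encodes the uniformity you need, since it means that for every $\delta>0$ there is $R_0$ with $\pr(\,\cdot\,)\ge 1-\delta$ for \emph{all} $x$ with $|x|\ge R_0$ (otherwise one could extract a violating sequence). So neither your rotational-invariance claim nor the covering argument is required.
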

Note that \eqref{eq:part-b-earlier} follows immediately from \eqref{eq:part-a-earlier} because for the lower bound on $|\calB_r^{\calC}(0)|$ it suffices that a constant fraction of vertices at distance at most $r$ have cost-distance at most $r^{1/\eta_0+\eps}$, which is implied by (the upper bound in) \eqref{eq:part-a-earlier}. Hence we do obtain an absolute value in \eqref{eq:part-b-earlier} within the $\mathbb P$ sign.

Our next result refines Theorem \ref{thm:polynomial_regime} and \eqref{eq:part-a-earlier} in Corollary~\ref{cor:polynomial-total} when $\eta_0=1$ in \eqref{eq:eta_0}. I.e., when $
\mu > \mu_{\mathrm{pol}}$, we can sharpen both upper and lower bounds if the dimension is at least $2$:
\begin{restatable}{theorem}{LinearRegime}
\label{thm:linear_regime}
Consider $1$-FPP on IGIRG or SFP of Definition \ref{def:girg} satisfying the assumptions given in \eqref{eq:power_law}--\eqref{eq:F_L-condition} with $0\in\calV$. Assume that $\alpha>2$, $\tau\in(2,3)$, $\mu>\mu_{\mathrm{pol}}$, and additionally $d\ge 2$. Let $\calC_\infty$ be the infinite component.
	Then there exist constants $\kappa_1,\kappa_2>0$ depending only on the model parameters such that
	\begin{align}\label{eq:linear_regime}
		\lim_{|x|\rightarrow\infty} \pr\left(\kappa_1 |x| < d_{\mathcal{C}}(0,x)<\kappa_2 |x| \ \mid \  \textnormal{$0,x \in \calC_{\infty}$}\right)=1.
	\end{align}
	The lower bound also holds in dimension $1$.
\end{restatable}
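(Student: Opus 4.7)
The plan is to treat the two bounds separately. The lower bound $\kappa_1|x|<d_\calC(0,x)$ is in fact a special case of the same renormalisation machinery used to prove Theorem~\ref{thm:polynomial_regime}: when $\mu>\mu_{\mathrm{pol}}$ the exponent $\eta_0$ in~\eqref{eq:eta_0} equals $1$, and the multi-scale scheme (Proposition~\ref{prop:path_in_good_block}) yields directly a linear lower bound rather than $|x|^{1-\eps}$. The argument works in all dimensions and requires no new ingredient beyond keeping track of constants in the linear regime, explaining the parenthetical remark that the lower bound also applies in $d=1$. The substantive task is therefore the upper bound in $d\ge 2$, which I sketch below following the outline given after Theorem~\ref{thm:polynomial-upper}.

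\textbf{Step 1 (Auxiliary graph.)} Fix a large constant $M$ to be chosen later and set $\calV_M:=\{v\in\calV:W_v\in[M,2M]\}$. Define $G_M=(\calV_M,\calE_M)$ by keeping every edge $uv$ of the underlying graph with both endpoints in $\calV_M$ and cost $\cost{uv}\le M^{3\mu}$. Because the penalty $(W_uW_v)^\mu$ is at most $(2M)^{2\mu}$ on $\calV_M$, the cost cap translates into $L_{uv}\le M^{\mu}/4^\mu$, an event of probability close to $1$ for large $M$ by Assumption~\ref{assu:L}. Since $\pr(W>M)\sim \ell(M)M^{-(\tau-1)}$ with $\tau\in(2,3)$, the density of $\calV_M$ combined with~\eqref{eq:connection_prob} makes the expected degree inside $\calV_M$ grow polynomially in $M$. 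Thus, conditionally on its vertex set, $G_M$ stochastically dominates a high-density random geometric graph in the sense of Definition~\ref{def:dense-geometric}.

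\textbf{Step 2 (Renormalisation and linear distances.)} Partition $\R^d$ into boxes of an appropriate side $s_M$, declare a box good if its intersection with $G_M$ contains a sufficiently well-connected cluster, and consider the induced site-bond percolation $\omega^\star$ on a coarsened lattice. By standard stochastic domination arguments as in~\cite{antal1996chemical}, $\omega^\star$ is supercritical for $M$ large, hence has a unique infinite cluster $\calC_\infty^\star$. The Antal--Pisztora estimate then gives chemical distances in $\calC_\infty^\star$ linear in Euclidean distance; pulling back, two points of $\calC_\infty^\star$ at Euclidean distance $|x|$ are joined by a path in $G_M$ using $O(|x|)$ edges, each of cost at most $M^{3\mu}=O(1)$, for a total 1-FPP cost $O(|x|)$.

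\textbf{Step 3 (Connecting the endpoints.)} The main obstacle is to link $0$ and $x$, which lie in the infinite component $\calC_\infty$ of the \emph{original} graph, to nearby vertices of $\calC_\infty^\star$ at bounded cost. For this I would generalise the local-density result of Deuschel--Pisztora~\cite{DP-percolation} from nearest-neighbour percolation to high-density random geometric graphs (Corollary~\ref{cor:dense-subgraph}), showing that with high probability $\calC_\infty^\star$ comes within constant Euclidean distance of every prescribed point. Conditional on $0\in\calC_\infty$ one can then select a short path from $0$ inside $\calC_\infty$ to such a nearby vertex of $\calC_\infty^\star$, and its cost is a random variable independent of $|x|$; analogously for $x$. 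Combining with Step~2 gives $d_\calC(0,x)\le\kappa_2|x|$ with probability tending to $1$ as $|x|\to\infty$. The hardest point is precisely this density estimate: adapting~\cite{DP-percolation} to a geometric (rather than nearest-neighbour) setting requires redoing the renormalisation carefully, and the short connecting paths from $0,x$ must be chosen so as not to introduce dependencies between the chosen path and the costs of its edges, which would spoil the constant bound.
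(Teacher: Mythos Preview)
Your proposal is correct and follows essentially the same route as the paper: the lower bound is indeed the $\eta_0=1$ case of the renormalisation scheme (Propositions~\ref{prop:large_box_good} and~\ref{prop:path_in_good_block}, packaged as Theorem~\ref{thm:linear_polynomial_lower_bound}), and the upper bound proceeds exactly via the auxiliary graph $G_M$, a one-step renormalisation to supercritical bond percolation, the Antal--Pisztora linear-distance estimate, and a Deuschel--Pisztora-type local-density result to hook $0$ and $x$ into the infinite cluster of $G_M$ at bounded cost (Corollary~\ref{cor:dense-subgraph}, Claims~\ref{claim:two-in-Cinfty}--\ref{claim:linear-start-vertices}, Corollary~\ref{lem:linear-regime-join-two}). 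The only points worth flagging are cosmetic: the paper takes box side-length $R=M^{2/d}/\sqrt d$ so that any two vertices of $\calV_M$ in the same box automatically satisfy $W_uW_v/|u-v|^d\ge 1$, and the dependency issue you mention at the end is handled simply by fixing any deterministic rule for selecting the connecting path and observing that its cost is an a.s.\ finite random variable not depending on $|x|$.
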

The lower bound is actually valid in a more general setting, see Theorem~\ref{thm:linear_polynomial_lower_bound} below. Our proof of the lower bound is a generalisation of  ideas from Berger~\cite{berger2004lower} to the edge-weighted one-dependent setting, and indeed we recover his result on graph-distances in Long-Range Percolation and the extension to Scale-Free Percolation in~\cite{deprez2015inhomogeneous} when we set $\alpha>2,\tau>3,\mu=0, \beta=\infty$. However, we give a proof that avoids Kingman's subadditive ergodic theorem that both papers \cite{berger2004lower, deprez2015inhomogeneous} use.
As a corollary to our proof we obtain the following result.

\begin{corollary}\label{cor:linear-dev} Consider the setting of Theorem \ref{thm:linear_regime}. 
Let $\pi_{0,x}^\star$ be any path between $0$ and $x$ realising the cost-distance $d_{\calC}(0,x)$.
Then there is a constant $\kappa_3>0$ (depending only on the model parameters) such that $\pi_{0,x}^\star$ is contained in $B_{\kappa_3|x|}(0)$ asymptotically almost surely:
\begin{equation}
\lim_{|x|\to \infty}\pr\Big(\pi_{0,x}^\star\subset B_{\kappa_3|x|}(0)\Big)=1.
\end{equation}
\end{corollary}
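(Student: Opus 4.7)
The plan is to argue by contradiction, combining the linear upper bound from Theorem~\ref{thm:linear_regime} with a \emph{uniform-in-endpoint} version of the linear lower bound that the renormalisation proof of Theorem~\ref{thm:linear_regime} delivers as a by-product.

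First, I would extract from the Berger-style renormalisation proof of the lower bound (built on Proposition~\ref{prop:path_in_good_block}) the following strengthening: there exists an event $\calE$ with probability tending to one (conditionally on $0\in\calC_\infty$) and a deterministic threshold $R_0$ such that on $\calE$,
\begin{equation*}
d_{\calC}(0,z)\ge \kappa_1|z| \qquad \textnormal{for every } z\in\calV \textnormal{ with } |z|\ge R_0,
\end{equation*}
\emph{simultaneously}, with the same $\kappa_1$ as in Theorem~\ref{thm:linear_regime}. The reason is that the good-block configuration certifying the lower bound is a single random object depending only on a neighbourhood of the origin; once it is in place, any path leaving this neighbourhood and ending at a vertex $z$ is forced to cross order-$|z|$ good blocks, each contributing constant cost, so the bound is automatically uniform in the target endpoint.

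Given this, set $\kappa_3:=2\kappa_2/\kappa_1$ with $\kappa_2$ from Theorem~\ref{thm:linear_regime}, and work on the intersection of $\calE$ with the upper-bound event $\{d_{\calC}(0,x)\le \kappa_2|x|\}$, which still has probability tending to one (conditionally on $0,x\in\calC_\infty$). Suppose for contradiction that $\pi_{0,x}^\star\not\subset B_{\kappa_3|x|}(0)$. Then $\pi_{0,x}^\star$ visits a vertex $z$ with $|z|\ge \kappa_3|x|$, and for $|x|$ large enough this guarantees $|z|\ge R_0$. The sub-path of $\pi_{0,x}^\star$ from $0$ to $z$ has cost at least $d_{\calC}(0,z)\ge \kappa_1\kappa_3|x|=2\kappa_2|x|$; since edge-costs are non-negative, the full path costs at least $2\kappa_2|x|>\kappa_2|x|\ge d_{\calC}(0,x)$, a contradiction.

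The main obstacle is the first step: the conclusion $d_{\calC}(0,z)\ge \kappa_1|z|$ must hold simultaneously for all far-away $z$, not merely for the fixed pair $(0,x)$ handled by the ``in probability'' statement of Theorem~\ref{thm:linear_regime}. A naive union bound over the polynomially many candidate vertices $z$ does not close at the linear scale, so the argument must instead leverage the fact that the renormalisation machinery controls a single good-block environment around $0$, rather than analysing each pair $(0,z)$ independently. Re-reading the proof of Proposition~\ref{prop:path_in_good_block} with this uniformity in mind should require no essentially new ideas, which is exactly why the corollary is stated as a by-product of the proof rather than an independent result.
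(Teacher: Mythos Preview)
Your proposal is correct and follows essentially the same route as the paper: combine the linear upper bound $d_{\calC}(0,x)<\kappa_2|x|$ with the \emph{uniform} linear lower bound to rule out any optimal path that wanders too far. The paper argues via the intermediate inequality~\eqref{eq:cost-lower-intermed} (with $\eta=1$), setting $\kappa_3:=\kappa_2\cdot 30^{d+2}/u^\star$, while you phrase it through $d_{\calC}(0,z)\ge\kappa_1|z|$ and set $\kappa_3:=2\kappa_2/\kappa_1$; these are the same argument up to the choice of constant.

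One remark: your final paragraph treats the uniformity as something that must be ``extracted'' by re-reading Proposition~\ref{prop:path_in_good_block}, but in fact the uniform statement is already the conclusion of Theorem~\ref{thm:linear_polynomial_lower_bound} (note the quantifier ``for all $x\in\calV$ with $|x|\ge r$'' in~\eqref{eq:linear-lower}). So no additional work is needed beyond citing that theorem, and your concern about union bounds is already handled there. Also, the threshold $r$ in that theorem is random rather than deterministic; what you need (and what the a.s.\ statement gives) is that for any $\delta>0$ there is a deterministic $R_0(\delta)$ with $\pr(r\le R_0)\ge 1-\delta$, which suffices since $|z|\ge\kappa_3|x|\to\infty$.
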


The statements of Theorem \ref{thm:polynomial_regime} and Corollary \ref{cor:ball-growth} remain valid in finite-sized GIRGs, since finite GIRGs are obtained as subgraphs of IGIRG, and hence distances can only increase in GIRG versus the surrounding infinite model. For the upper bound in Theorem~\ref{thm:linear_regime}, the extension to finite GIRGs requires a proof:
\begin{restatable}{theorem}{FiniteGraph}
\label{thm:finite_graph}
Consider GIRG of Definition \ref{def:girg} satisfying the assumptions given in \eqref{eq:power_law}--\eqref{eq:F_L-condition}. Assume that $\alpha>2$, $\tau\in(2,3)$, $\mu>\mu_{\mathrm{pol}}$, and additionally $d\ge 2$. Let $\calC_{\max}^{(n)}$ be the largest component in $Q_n$. Let $u_n,v_n$ be two vertices chosen uniformly at random from $\calC_{\max}^{(n)}$. 
Then there exist constants $\kappa_1,\kappa_2>0$ depending only on  the model parameters such that 
	\begin{align}\label{eq:finite-linear-regime}
		 \lim_{n\to \infty}\pr\left(\kappa_1 |u_n-v_n| < d_{\mathcal{C}}(u_n,v_n)<\kappa_2 |u_n-v_n| 
   \right)=1.
	\end{align}
\end{restatable}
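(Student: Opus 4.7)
The plan is to deduce Theorem \ref{thm:finite_graph} from Theorem \ref{thm:linear_regime} by coupling the finite GIRG on $Q_n$ with the surrounding IGIRG. The lower bound comes essentially for free via monotonicity, while the upper bound requires verifying that a linear-cost IGIRG path between the two random endpoints can be chosen to lie inside $Q_n$ with probability tending to one.

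\textbf{Lower bound.} By construction the GIRG is the IGIRG restricted to $Q_n$, so $d_\calC^{\mathrm{GIRG}}(u,v) \ge d_\calC^{\mathrm{IGIRG}}(u,v)$ whenever $u,v\in Q_n$. A standard density argument places a uniformly random vertex of $\calC_{\max}^{(n)}$ in the IGIRG infinite component $\calC_\infty$ a.a.s. Since $|u_n-v_n| = \Theta(n^{1/d})\to\infty$ for two independent uniform points in $Q_n$, translation invariance of the PPP together with the lower bound of Theorem \ref{thm:linear_regime} applied after shifting one endpoint to the origin yields $d_\calC^{\mathrm{IGIRG}}(u_n,v_n) \ge \kappa_1 |u_n-v_n|$ a.a.s.

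\textbf{Upper bound: an IGIRG path with small deviation.} The IGIRG upper bound of Theorem \ref{thm:linear_regime} is constructive via the auxiliary dense random geometric graph $G_M$ introduced in the proof sketch on page \pageref{proof:idea-upper}. Lemma \ref{lem:dense-geometric-linear-paths} strengthens this: between any $u,v \in \calC_\infty(G_M)$ at Euclidean distance $R$, there is a.a.s.\ a path in $G_M$ of cost at most $\kappa_2' R$ whose vertices all stay within (ideally sublinear) Euclidean distance of the segment $[u,v]$. Corollary \ref{cor:dense-subgraph} provides $\calC_\infty(G_M)$-vertices at $O(1)$ Euclidean distance from $u_n$ and $v_n$, to which $u_n,v_n$ are attached at constant cost. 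Concatenating, one obtains an IGIRG path $\pi$ from $u_n$ to $v_n$ of cost at most $\kappa_2 |u_n-v_n|$, contained in a thin thickening of the segment $[u_n,v_n]$.

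\textbf{Boundary avoidance and conclusion.} For $u_n,v_n$ uniformly random in $\calC_{\max}^{(n)}$, which has density uniformly bounded below in $Q_n$, both endpoints lie at Euclidean distance at least $\delta_n n^{1/d}$ from $\partial Q_n$ with probability $1-o(1)$, provided $\delta_n\to 0$ sufficiently slowly. Provided the thickening radius from Lemma \ref{lem:dense-geometric-linear-paths} is smaller than this margin, the path $\pi$ is entirely contained in $Q_n$. Any edge of the IGIRG with both endpoints in $Q_n$ is also an edge of the GIRG with the same cost, so $\pi$ is a valid GIRG path, yielding $d_\calC^{\mathrm{GIRG}}(u_n,v_n) \le \kappa_2 |u_n-v_n|$.

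\textbf{Main obstacle.} The delicate quantitative point is the size of the deviation from the straight segment in Lemma \ref{lem:dense-geometric-linear-paths}. To trade this against the boundary margin and obtain probability $1-o(1)$ (rather than just a lower bound bounded away from zero), one needs sublinear, ideally polynomial, deviation bounds. Standard Antal--Pisztora-type shape theorem techniques deliver such sublinear deviation for the chemical distance in supercritical Bernoulli percolation, and the work is to transport these arguments to the dense random geometric graph setting with one-dependent costs, using the renormalisation machinery developed around Corollary \ref{cor:dense-subgraph}.
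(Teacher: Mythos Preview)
Your overall structure---couple GIRG to IGIRG, use monotonicity for the lower bound, and for the upper bound use the dense-geometric-graph machinery to produce a linear-cost low-deviation path that stays inside $Q_n$---is exactly the paper's approach. The lower bound and the construction of the IGIRG path via $G_M$, $\calH_\infty$, and Corollary~\ref{cor:dense-subgraph}/Lemma~\ref{lem:dense-geometric-linear-paths} are correct in outline.

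The gap is in your ``main obstacle''. You do \emph{not} need sublinear deviation, and the Antal--Pisztora shape theorem is not required. Lemma~\ref{lem:dense-geometric-linear-paths} (and the cost version in Corollary~\ref{lem:linear-regime-join-two}) already delivers deviation at most $\zeta|u-v|$ for \emph{any} fixed $\zeta>0$, and this linear bound with small constant is enough. The point you are missing is the order of quantifiers: fix $\delta>0$ first, then choose $\zeta=\zeta(\delta)$ (the paper takes $\zeta=\delta^2$), and observe that two uniform vertices of $\calC_{\max}^{(n)}$ fall in the shrunk box $Q_{(1-d\zeta)n}$ with probability at least $1-O(\zeta)\ge 1-\delta/2$. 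On that event the path of deviation $\le \zeta|u_n-v_n|\le \zeta\sqrt{d}\,n^{1/d}$ stays inside $Q_n$. Thus for every $\delta$ the failure probability is at most $\delta$ for large $n$, and taking $\delta\to 0$ gives the limit. Your formulation with $\delta_n\to 0$ forces the deviation constant to vanish with $n$, which is why you (incorrectly) conclude that sublinear deviation is needed; the paper avoids this by keeping $\delta$ fixed until the end.

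A minor point: attaching $u_n,v_n$ to $\calH_\infty$ at ``$O(1)$ cost'' is not quite Corollary~\ref{cor:dense-subgraph} alone; it uses the connecting step of Claim~\ref{claim:linear-start-vertices}, which in turn requires conditioning on $u_n,v_n\in\calC_\infty$. The paper handles this by switching from uniform-in-$\calC_{\max}^{(n)}$ to uniform-in-$\calV_n$ (paying the inverse density of the giant), then to uniform-in-$Q_n$ via a Palm coupling, and finally using $\calC_{\max}^{(n)}\subseteq\calC_\infty$ whp.
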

Next we present extensions, in particular for the $\tau>3, \alpha>2$ case where we have very mild conditions on the distribution of $L$, see Corollary \ref{cor:classical} below.

\subsubsection{Limit Cases and Extensions}\label{sec:threshold}
The results above naturally extend to cases/models that may informally be described as $\alpha = \infty$ or $\beta = \infty$, and to some extent to $\tau\ge3$ as well. We start with $\alpha = \infty$. This means that  we replace the condition~\eqref{eq:connection_prob} on the connection probability $h(\cdot)$ by
\begin{align}\label{eq:alpha_infty}
 h(x,w_1,w_2)
 \begin{cases} 
 \ = 0,\quad  & \text{if } \tfrac{w_1w_2}{|x|^d} < c', \\ 
 \ \ge \underline{c}\quad & \text{if }\tfrac{w_1w_2}{|x|^d} \geq 1,
 \end{cases}
 \end{align}
 for some constants $\underline c, c' \in(0,1] $, and where $h(x,w_1, w_2)$ can take arbitrary values in $[0,1]$ when $w_1w_2/|x|^d\in [c',1)$.
 We use the bound $\tfrac{w_1w_2}{|x|^d} \geq 1$ in the second row for convenience, as it allows us to write the proofs for different cases in a consistent way, but it could easily be replaced by $\tfrac{w_1w_2}{|x|^d} \geq c''$ for any other constant $c''\ge c'$.  The connectivity function $h$ in \eqref{eq:alpha_infty} covers the so-called threshold regime for \emph{hyperbolic random graphs} when we also set $d=1$, see~\cite[Theorem 2.3]{bringmann2019geometric}. In this case, when $\tau\in(2,3)$, we extend the definitions~\eqref{eq:mu_pol_log} and \eqref{eq:eta_0} in the natural way, since $\lim_{\alpha\to \infty}\mu_{\mathrm{pol},\alpha}=1/d$:
\begin{align}\label{eq:alpha-infty-definitions}
    \mu_{\log}:=\frac{3-\tau}{\beta}, \quad \mu_{\mathrm{pol}}:= \frac{1}{d}+\frac{3-\tau}{\beta},\quad 
    \eta_0 := \begin{cases}
	1 & \mbox{ if $\mu>\mu_{\mathrm{pol}}$,}\\
	  d\cdot(\mu-\mu_{\log})  & \mbox{ if $\mu\le\mu_{\mathrm{pol}}$.}
	\end{cases}
\end{align}
To describe the case $\beta = \infty $, we replace~\eqref{eq:F_L-condition} by the condition
\begin{align}\label{eq:beta_infty}
	\lim_{t\to 0} F_L(t)/t^{\beta} = 0 \mbox{ for all }0<\beta <\infty.
\end{align}
This means that the cdf of $L> 0$ is flatter than any polynomial near $0$. In particular, this condition is satisfied if $F_L$ has no probability mass around zero, for example in the case $L \equiv 1$. 
 
In this case, we again replace~\eqref{eq:mu_pol_log}-\eqref{eq:eta_0} naturally by 
\begin{align}\label{eq:beta-infty-definitions}
    \mu_{\log}:=0, \quad \mu_{\mathrm{pol}}:= \max\{\tfrac{1}{d}, \mu_{\mathrm{pol},\alpha}\}, \quad
    \eta_0 := \begin{cases}
	1 & \mbox{ if $\mu>\mu_{\mathrm{pol}}$,}\\
	\min\{ d\mu, \mu/\mu_{\mathrm{pol},\alpha}\} & \mbox{ if $\mu\le\mu_{\mathrm{pol}}$.}
	\end{cases}
\end{align}
We mention that these definitions stay valid also for $\tau=3$. 
Finally, in the case $\alpha=\beta=\infty$ we replace~\eqref{eq:mu_pol_log}-\eqref{eq:eta_0} by
\begin{align}\label{eq:alpha-beta-infty-definitions}
    \mu_{\log}:=0, \quad \mu_{\mathrm{pol}}:= \tfrac{1}{d}, \quad \eta_0 := \min\{1,d\mu\}.
\end{align}

Our main results still hold for these limit regimes. We remark that the corresponding upper bounds also hold, see~\cite[Theorem~1.8]{komjathy2022one1}.
\begin{theorem}
[Extension to threshold IGIRGs/GIRGs, and $\beta=\infty$]~
\label{thm:threshold_regimes}
\begin{enumerate}[(a)]
    \item Theorems~\ref{thm:polynomial_regime}, \ref{thm:linear_regime}, and \ref{thm:finite_graph} still hold for $\alpha=\infty$ if we replace the definitions of $\mu_{\mathrm{pol}}, \mu_{\log}, \eta_0$ in~\eqref{eq:mu_pol_log}-\eqref{eq:eta_0} by their values in~\eqref{eq:alpha-infty-definitions}. 
    \item Theorems~\ref{thm:polynomial_regime}, \ref{thm:linear_regime}, and \ref{thm:finite_graph} still hold for $\beta=\infty$ if we replace the definitions of $\mu_{\mathrm{pol}}, \mu_{\log}, \eta_0$ in~\eqref{eq:mu_pol_log}-\eqref{eq:eta_0} by their values in \eqref{eq:beta-infty-definitions}.
    \item Theorems~\ref{thm:polynomial_regime}, \ref{thm:linear_regime}, and \ref{thm:finite_graph} still hold for $\alpha=\beta=\infty$ if we replace the definitions of $\mu_{\mathrm{pol}}, \mu_{\log}, \eta_0$ in~\eqref{eq:mu_pol_log}-\eqref{eq:eta_0} by their values in~\eqref{eq:alpha-beta-infty-definitions}.
\end{enumerate}
\end{theorem}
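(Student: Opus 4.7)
All three parts can be established by adapting the proofs of Theorems~\ref{thm:polynomial_regime}, \ref{thm:linear_regime}, and~\ref{thm:finite_graph}, combining a coupling argument for $\beta=\infty$ with a re-examination of the renormalization step for $\alpha=\infty$. Throughout, I would separate the lower bounds (proved via renormalization) from the upper bounds (proved via the auxiliary graph $G_M$ and Antal--Pisztora).

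Part (b), $\beta=\infty$, would be handled by monotonicity. For any $\beta'<\infty$, condition~\eqref{eq:beta_infty} implies that $F_L(t)\le c_1 t^{\beta'}$ for $t$ near~$0$, so I would couple the edge variables $L_{xy}$ to those of a $\beta'$-model with $L_{xy}\ge L'_{xy}$ edgewise, making cost-distances in the $\beta=\infty$ model pointwise dominate those in the $\beta'$-model. Applying Theorems~\ref{thm:polynomial_regime} and~\ref{thm:linear_regime} with $\beta'$ and letting $\beta'\to\infty$ then yields the claimed lower bounds, since $\mu_{\log}(\beta')=(3-\tau)/\beta'\to 0$ and $\eta_0$ tends to the value in~\eqref{eq:beta-infty-definitions}. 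For the linear upper bound I would check that the proof of Theorem~\ref{thm:linear_regime} only uses that an edge between two weight-$[M,2M]$ vertices has cost at most $M^{3\mu}$ with positive probability; this holds for any $L$ with $\pr(L<\infty)=1$, uniformly as $M$ grows.

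Part (a), $\alpha=\infty$, is the most substantive, as no direct stochastic monotonicity compares the threshold model with a finite-$\alpha$ one. I would revisit the renormalization proof of the lower bound directly. The only place $\alpha$ enters is in bounding the probability that an edge of Euclidean length~$\ell$ exists between vertices of weights $w_1,w_2$: in the finite-$\alpha$ regime this is at most $\overline{c}(w_1w_2/\ell^d)^\alpha$ when $w_1 w_2<\ell^d$, whereas in the threshold regime it equals zero whenever $w_1 w_2 < c'\ell^d$. The latter is a strictly stronger restriction, so every ``bad block'' estimate in the renormalization argument only tightens, and the contribution $(3-\tau)/(d(\alpha-2))$ to $\mu_{\mathrm{pol}}$ (which accounted for long cheap edges between low-weight pairs) would disappear, leaving $\mu_{\mathrm{pol}}=\mu_{\log}+1/d$ and $\eta_0=d(\mu-\mu_{\log})$ as prescribed by~\eqref{eq:alpha-infty-definitions}. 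For the linear upper bound, the auxiliary graph $G_M$ still stochastically dominates a Poisson--Boolean geometric graph: when $W_x,W_y\in[M,2M]$ and $|x-y|^d\le 4M^2/c'$ we have $W_x W_y/|x-y|^d\ge 1$, so the edge appears with probability at least~$\underline{c}$ by~\eqref{eq:alpha_infty}. Choosing $M$ sufficiently large pushes this random geometric graph into the supercritical regime, and the remainder of the argument (site-bond renormalization, Antal--Pisztora, and the generalised Deuschel--Pisztora local density result) transfers unchanged.

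Part (c) combines both: first couple $L$ to a $\beta'$-model as in part~(b), then run the threshold renormalization of part~(a), and finally let $\beta'\to\infty$; the linear upper bound is identical to that of part~(a), being independent of~$\beta$. The hardest step of the whole programme is the careful verification that every lemma in the renormalization argument of part~(a) which integrates edge probabilities against the power-law weight distribution still yields a bound of the required form under the threshold. I expect this to be routine because the threshold is strictly more restrictive than any polynomial decay, but it is the one place where plain monotonicity does not suffice and explicit bookkeeping is required.
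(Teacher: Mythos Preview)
Your treatment of part~(b) and of the upper bounds in all three parts essentially matches the paper: the $\beta=\infty$ lower bounds are obtained by coupling $L$ to a $\beta'$-model with smaller edge costs and letting $\beta'\to\infty$, and the linear upper bound goes through because the construction of $G_M$ in Corollary~\ref{cor:dense-subgraph} only needs~\eqref{eq:M-mesh-probability}, which holds verbatim for $\alpha=\infty$ and $\beta=\infty$.

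The substantive difference is in part~(a), lower bounds. You assert that ``no direct stochastic monotonicity compares the threshold model with a finite-$\alpha$ one'' and therefore propose to rerun the entire renormalisation argument (Lemma~\ref{lem:no_long_cheap_edge}, Proposition~\ref{prop:large_box_good}, Proposition~\ref{prop:path_in_good_block}) under the threshold kernel~\eqref{eq:alpha_infty}. This would work, but the premise is mistaken: the paper does find such a monotonicity. Given the threshold kernel $h$ with parameters $\underline c,c'$, for any $\alpha'<\infty$ one can set
\[
h'(x,w_1,w_2):=\begin{cases}\underline c\cdot\min\{1,w_1w_2/|x|^d\}^{\alpha'} & \text{if } w_1w_2/|x|^d<c',\\ 1 & \text{if } w_1w_2/|x|^d\ge c',\end{cases}
\]
and check that $h'\ge h$ pointwise (the first case because $h=0$ there, the second because $h\le 1$) and that $h'$ satisfies~\eqref{eq:connection_prob} for $\alpha'$ with suitable constants $\underline c',\overline c'$. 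Thus the $\alpha=\infty$ model is a subgraph of a bona fide $\alpha'$-IGIRG, and the lower bounds for $\alpha'$ transfer immediately; letting $\alpha'\to\infty$ recovers $\eta_{0,\infty}$ from~\eqref{eq:alpha-infty-definitions}. This is a two-line argument once the coupling is seen, whereas your route requires re-examining every integral in Lemma~\ref{lem:no_long_cheap_edge}. Your intuition that the threshold is ``strictly more restrictive'' is exactly what makes the coupling work; you just stopped short of formalising it as a domination of $h$ by a finite-$\alpha$ kernel. For part~(c) the paper simply composes the two couplings, avoiding any return to the renormalisation machinery.
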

In Theorem \ref{thm:threshold_regimes}(b), the requirement that  $\mu>\mu_{\mathrm{\log}}=0$ implies that we exclude the case $\mu=0$. Indeed, $\mu=0$ means the setting of classical iid first-passage percolation, where whenever $\tau\in(2,3)$, explosion occurs under mild conditions on $L$, see \cite[Theorems 2.11, 2.12]{komjathy2020explosion}.
Theorem~\ref{thm:threshold_regimes}(a) implies the analogous result for hyperbolic random graphs (HypRG)  by setting $d=1$ in \eqref{eq:alpha-infty-definitions}, except for some technical details. By our Definition \ref{def:girg}, the number of vertices in a finite GIRG is random, and has  Poisson distribution with parameter $n$, while in the usual definition of HypRG~\cite{krioukov2010hyperbolic,gugelmann2012random} and GIRG~\cite{bringmann2019geometric} the number of vertices is exactly $n$. Further, in HypRG the distribution of the vertex-weights is $n$-dependent $W^{(n)}$, so that $W^{(n)}$ converges to a limiting distribution $W$ \cite{komjathy2020explosion}. These issues can be overcome by coupling techniques e.g. presented in \cite{komjathy2020explosion}: a model with exactly $n$ vertices can be squeezed between two GIRGs with Poisson intensity $1-\sqrt{4\log n/n}$ and  $1+\sqrt{4\log n/n}$, and one can couple $n$-dependent and limiting vertex-weights to each other, respectively, but we avoid spelling out the details and refer the reader to \cite[Claims 3.2, 3.3]{komjathy2020explosion}.

Finally, the lower bounds in Theorems~\ref{thm:polynomial_regime} and~\ref{thm:linear_regime} also hold under much weaker, but more technical conditions. In particular, for $\tau > 3$ we can strongly relax Assumption~\ref{assu:L} on~$L$. 
\begin{restatable}{theorem}{LowerBounds}
\label{thm:linear_polynomial_lower_bound}
Consider $1$-FPP on IGIRG or SFP of Definition \ref{def:girg} satisfying the assumptions given in \eqref{eq:power_law}--\eqref{eq:cost} with $0\in\calV$ (but not necessarily Assumption~\ref{assu:L} on $L$). Assume that $\alpha>2$ and $\tau\in(2,\infty)$.  

(1) \emph{Conditions for polynomial distance lower bound.} 
If  $\boldsymbol{\tau\in (2,3]}$ and $L$ satisfies Assumption \ref{assu:L} with some $\beta\in(0,\infty]$ so that $\boldsymbol{\mu > \mu_{\log}}$, then
for all $\eps >0$, almost surely there is $r>0$ such that 
\begin{align}\label{eq:lin-poly-lower-bound}
	\mbox{for all $x\in \calV$ with $|x|\ge r$}: d_{\mathcal{C}}(0,x) \ge |x|^{\eta_0-\eps}.  
\end{align}

(2) \emph{Conditions for strictly linear distance lower bound.} 
If \emph{either} $\boldsymbol{\tau>3}$ and $\mu\ge 0$, and $L$ in \eqref{eq:cost} satisfies $\mathbb P(L>0)=1$, \emph{or}  $\boldsymbol{\tau\in(2,3]}$ and $L$ satisfies Assumption \ref{assu:L} with some $\beta>0$ so that $\boldsymbol{\mu > \mu_{\mathrm{pol}}}$, then there exists $\kappa>0$ such that almost surely there is $r>0$ such that
    \begin{align}\label{eq:linear-lower}
	\mbox{for all $x\in \calV$ with $|x|\ge r$}: d_{\mathcal{C}}(0,x) \ge \kappa |x|.
	\end{align}
\end{restatable}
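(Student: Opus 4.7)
The approach is a multiscale renormalisation generalising Berger's argument \cite{berger2004lower} for graph distances in long-range percolation to the one-dependent edge-weighted setting of 1-FPP. The plan is first to set up a geometric sequence of scales $R_k = R_0\, \gamma^k$ for some $\gamma>1$ and a large base $R_0$, and at each scale partition $\R^d$ into cubes of side $R_k$. A scale-$k$ cube is declared \emph{good} if (i) recursively all scale-$(k-1)$ sub-cubes within a fixed neighbourhood are good, (ii) no vertex inside has weight exceeding a polynomially growing threshold $W_k := R_k^{\gamma_W}$, with $\gamma_W$ chosen just above the typical maximum-weight exponent $d/(\tau-1)$ in a cube of side $R_k$, and (iii) no edge incident to the cube has cost below a critical threshold $T_k$ while bridging Euclidean length more than a small fraction of $R_k$. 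Using the power-law tail of the weights, the connection-probability decay from \eqref{eq:connection_prob}, and the regular variation of $F_L$ near $0$ from Assumption \ref{assu:L}, the probability that a fixed scale-$k$ cube is bad is controlled by some $p_k$ decaying like $R_k^{-c}$ for a constant $c = c(\mpar)$, provided $R_0$ is large. Iterating, bad scale-$k$ cubes form a sparse set at every scale, and Borel--Cantelli guarantees that almost surely only finitely many bad scale-$k$ cubes touch the origin at every scale~$k$.

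The core ingredient is Proposition \ref{prop:path_in_good_block}, asserting inductively that any path crossing a good scale-$k$ cube from one face to the opposite face has cost at least $R_k^{\eta_0 - \eps_k}$, with $\eps_k\downarrow\eps$ slowly as $k$ grows. The inductive step dichotomises: a crossing path either uses only edges of Euclidean length at most $R_{k-1}$, in which case it crosses $\Omega(\gamma)$ disjoint good scale-$(k-1)$ sub-cubes and the bound follows by induction, or it contains a long edge $xy$ with $|x-y|\ge R_{k-1}$. For such an edge, \eqref{eq:connection_prob} forces $W_x W_y \gtrsim |x-y|^d$ up to a tolerable loss, so its penalty satisfies $(W_x W_y)^\mu \gtrsim |x-y|^{d\mu}$; conditioning on the weight sequence $\calW$ to decouple the $L_{xy}$'s, the smallest $L$-value achievable among the $O(R_k^d W_k^{2\alpha})$ candidate long edges in the cube is, by Assumption \ref{assu:L}, of order $R_k^{-d/\beta}$ up to logarithmic factors. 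Multiplying the penalty by the cheapest $L$, optimising over the weights on the path (bounded above by $W_k$ via (ii)), and optimising the tradeoff between few long cheap edges versus many short ones, recovers precisely the two terms in $\eta_0 = \min\{d(\mu-\mu_{\log}),\, \mu/\mu_{\mathrm{pol},\alpha}\}$ from \eqref{eq:eta_0}.

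To deduce \eqref{eq:lin-poly-lower-bound} uniformly in $x$, for each $x$ the plan is to choose the scale $k = k(x)$ with $R_k \approx |x|^{1-\eps/2}$. Since almost surely only finitely many scale-$k$ cubes near the origin are bad, any path from $0$ to $x$ decomposes into $\Omega(|x|/R_k)$ disjoint crossings of good scale-$k$ cubes, plus short detours around the finitely many bad ones; each crossing contributes at least $R_k^{\eta_0-\eps/2}$, yielding total cost at least $|x|^{\eta_0-\eps}$. The argument thus gives the bound simultaneously for all vertices at a given norm, without recourse to Kingman's subadditive ergodic theorem; this is the refinement of \cite[Lemma 2]{berger2004lower} that allows the extension from the linear regime to the polynomial one. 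For part~(2) with $\tau\in(2,3]$ and $\mu>\mu_{\mathrm{pol}}$, the same argument applies with $\eta_0 = 1$, yielding the linear bound \eqref{eq:linear-lower}; in the alternative case $\tau>3$, $\mu\ge 0$ with merely $\pr(L>0)=1$, the weights have finite variance, so $W_k$ can be taken constant and the regular-variation assumption on $L$ is not needed---the renormalisation then essentially reduces to Berger's original setting.

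The hardest step will be Proposition \ref{prop:path_in_good_block}: executing it with the sharp exponent $\eta_0-\eps$ uniformly in $k$ requires carefully balancing three choices (the weight threshold $W_k$, the cost threshold $T_k$, and the scale ratio $\gamma$) so that the per-scale failure probability $p_k$ is summable while the cost lower bound is exactly $\eta_0-\eps$. The one-dependence---edges out of a common vertex sharing the factor $(W_v)^\mu$---must be dismantled by conditioning on $\calW$ before any union bound on the $L_{xy}$'s. A further subtlety is to show that the inevitable detours around bad cubes contribute negligibly to the total cost; this follows from the rapid decay of $p_k$ combined with an elementary geometric estimate bounding the detour lengths.
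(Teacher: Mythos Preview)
Your proposal has the right architecture---multiscale renormalisation with a recursive notion of good boxes and an inductive lower bound on crossing costs---but there is a genuine gap in the treatment of long edges, and several structural choices differ from the paper in ways that matter.

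\textbf{The gap.} Your claim that ``\eqref{eq:connection_prob} forces $W_xW_y \gtrsim |x-y|^d$'' for a long edge is wrong: the connection probability $\Theta((W_xW_y/|x-y|^d)^\alpha)$ is positive for any weight pair, so long edges between \emph{low}-weight vertices exist (rarely) and carry a \emph{small} penalty factor. These are exactly the dangerous edges, since their cost is governed by $L_{xy}$ alone. Your subsequent count ``$O(R_k^d W_k^{2\alpha})$ candidate long edges'' does not correspond to any natural quantity, and the optimisation you describe does not recover the term $d(\alpha-1)$ that controls this regime. The paper does \emph{not} argue via weight thresholds at all: instead, Lemma~\ref{lem:no_long_cheap_edge} computes directly the expected number of edges in a box that are simultaneously long (length $\ge N$) and cheap (cost $\le N^{ad}$), integrating over the full weight distribution. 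The three terms in \eqref{eq:no_long_cheap_edge} correspond to low--low, low--medium, and low--high weight pairs, and it is the careful balance of $a$ against $\mu$ in this lemma that produces $\eta_0$.

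\textbf{Structural differences.} First, the paper uses super-geometric scales $A_k = A_1(k!)^2$, not geometric ones: the ratio $A_k/A_{k-1} = k^2$ grows with $k$, so a $k$-block has $k^{2d}$ children. This is essential for the pigeonhole estimate \eqref{eq:lower-bound-sum-distance}: the $O(1)$ bad children and bad segments together span at most $O(A_{k-1})$, which is a $(1-k_0/k^2)$-fraction of $|x-y|\ge A_k/16$; with geometric scales the loss would be a fixed fraction at every level and the product would vanish. Second, the paper allows up to $3^d$ bad children per block (Definition~\ref{def:good_box}(ii)), not zero; requiring all children good would make the recursion $\psi_k \le k^{2d}\psi_{k-1}$ rather than $\psi_k \lesssim k^{4d}\psi_{k-1}^2$, and the former does not close. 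Third, the final assembly is simpler than you propose: rather than decomposing into $\Omega(|x|/R_k)$ crossings at a chosen scale, the paper takes the smallest $k'$ with $\pi\subseteq Q_{k'}$ and applies Proposition~\ref{prop:path_in_good_block}(2) once, using that the $(k'-1)$-block around $0$ is also good. This is what replaces Kingman's theorem.
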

Compared to Theorems~\ref{thm:polynomial_regime} and~\ref{thm:linear_regime}, Theorem~\ref{thm:linear_polynomial_lower_bound} covers the boundary case $\tau=3$.  More importantly, it states that the linear lower bound in \eqref{eq:linear-lower} holds   for $\tau> 3$ and $\alpha>2$ under very mild conditions on $L$ and $\mu$, i.e.,  we allow arbitrary edge weight distributions $L$ that have no probability mass at zero, and arbitrary penalty strength $\mu\ge 0$. This includes \emph{classical first passage percolation} by setting $\mu=0$ and $L$ arbitrary, a.s.\ positive, but also the case $L \equiv 1$ and $\mu=0$, giving \emph{graph distances}. In this latter case we recover the result of Berger~\cite{berger2004lower} for long-range percolation (LRP) and its extension by Deprez, Hazra, and W\"uthrich~\cite{deprez2015inhomogeneous} for SFP. We state the case of  \emph{classical} (iid) first passage percolation on finite variance degree models ($\tau>3$) with long-range parameter $\alpha>2$ as an immediate corollary.
\begin{corollary}\label{cor:classical}
    Consider classical first passage percolation with iid transmission times from distribution $L$ satisfying $\pr(L>0)=1$ on IGIRG, GIRG or SFP of Definition \ref{def:girg} with $\tau>3, \alpha>2$. 
Then there exists $\kappa>0$ such that almost surely there is $r>0$ such that
\begin{align*}
	\mbox{for all $x\in \calV$ with $|x|\ge r$}:	d_{\mathcal{C}}(0,x) \ge \kappa |x|.
	\end{align*}
\end{corollary}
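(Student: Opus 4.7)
The plan is essentially to recognize Corollary \ref{cor:classical} as a direct specialization of Theorem \ref{thm:linear_polynomial_lower_bound}(2) to the case $\mu=0$. First I would observe that when $\mu=0$, the cost function in Definition \ref{def:1-FPP} simplifies to $\cost{xy} = L_{xy}(W_xW_y)^0 = L_{xy}$, which is precisely the classical iid first passage percolation setting. Thus the cost-distance $d_{\mathcal C}(0,x)$ in the corollary coincides with the 1-FPP cost-distance considered throughout the paper, applied with the trivial penalty exponent.

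Next I would check that all hypotheses of the first alternative in Theorem \ref{thm:linear_polynomial_lower_bound}(2) are fulfilled by the corollary's assumptions. That theorem requires: (i) the underlying graph is IGIRG, GIRG, or SFP in the sense of Definition \ref{def:girg} (given); (ii) $\alpha > 2$ (given); (iii) $\tau > 3$ (given, hence in particular $\tau \in (2,\infty)$); (iv) $\mu \ge 0$ (satisfied with $\mu=0$); and (v) $\pr(L > 0) = 1$ (given). Under these conditions, Assumption \ref{assu:L} on $L$ is explicitly not required, so there are no further distributional constraints on $L$ beyond almost-sure positivity.

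Applying Theorem \ref{thm:linear_polynomial_lower_bound}(2) with $\mu=0$ therefore yields a constant $\kappa > 0$ such that almost surely there exists some (random) $r > 0$ with
\[
d_{\mathcal C}(0,x) \ge \kappa\,|x| \quad \text{for all } x \in \calV \text{ with } |x| \ge r,
\]
which is precisely the statement of Corollary \ref{cor:classical}. There is no additional obstacle here — the entire content of the corollary is in Theorem \ref{thm:linear_polynomial_lower_bound}(2); the point of isolating it as a separate statement is to highlight that in the finite-variance regime $\tau > 3$ our renormalisation technique recovers a linear lower bound for classical FPP under essentially the minimal assumption $\pr(L>0)=1$, and in particular reproduces Berger's long-range percolation result \cite{berger2004lower} and its SFP extension \cite{deprez2015inhomogeneous} as the sub-case $L \equiv 1$.
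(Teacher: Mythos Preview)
Your proposal is correct and matches the paper's own treatment: the corollary is stated there as an ``immediate corollary'' of Theorem~\ref{thm:linear_polynomial_lower_bound}(2), obtained by setting $\mu=0$ so that $\cost{xy}=L_{xy}$. One minor point: Theorem~\ref{thm:linear_polynomial_lower_bound} is stated only for IGIRG and SFP, so for the GIRG case you should add the (trivial) observation that a finite GIRG is a subgraph of the surrounding IGIRG and hence cost-distances can only increase.
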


In Theorem \ref{thm:linear_polynomial_lower_bound}, (unlike in Theorem~\ref{thm:linear_regime}), since we state lower bounds, we do not condition on $0,x \in \calC_\infty$. In the case $\tau > 3$, an infinite component does not need to exist (it depends on the constants and slowly varying functions), so conditioning on $0,x \in \calC_\infty$ would not make sense. However, for parameters that ensure an infinite component of positive density, the event $0,x \in \calC_\infty$ has positive probability, and~\eqref{eq:lin-poly-lower-bound} and ~\eqref{eq:linear-lower} remain true after conditioning. 

{\bf Discontinuity at $\tau=3$.} Remarkably, for $\tau > 3$ there is no analogue to the strictly polynomial regime in Theorem~\ref{thm:polynomial_regime}, even though algebraically the limits of $\lim_{\tau\nearrow 3}\mu_{\textrm{pol}}=1/d$ and $\lim_{\tau\nearrow 3}\eta_0=\mu d$ exist and are in $(0,1)$. So if we fix some $\mu < 1/d$ and let $\tau \nearrow 3$, the cost-distances grow polynomially with exponents bounded away from one (e.g., they approach $1/2$ from below for $\mu  = 1/(2d)$). But as soon as $\tau > 3$ is reached, the exponent ``jumps'' to one. In other words, even though $\mu_{\textrm{pol}}$ converges to a positive limit $1/d$ as $\tau \nearrow 3$, it drops to $0$ as soon as $\tau >3$ is reached. In this sense, the parameter space is discontinuous in $\eta_0$ and $\mu_{\textrm{pol}}$.

\subsection{Proof outline and organisation of the paper}\label{sec:organisation}

\emph{Proof outline of the lower bounds.} For lower bounds, we need to show that every path from a vertex $x$ to $0$ is expensive. For this, we quantify the property that ``most long edges are expensive'' in Lemma~\ref{lem:no_long_cheap_edge}. This allows us to generalise a renormalisation method from Berger~\cite{berger2004lower}. For long-range percolation, Berger considers a growing system of boxes around the origin and defines a box $Q$ as \emph{good} if it does not contain edges of linear length in the box size and the same property holds recursively for most of its child-boxes, which are a certain set of non-disjoint sub-boxes that cover $Q$ multiple times. In the setting of 1-FPP, we modify this definition since long edges \emph{do} exist when $\tau\in(2,3)$ (which is implied by the doubly-logarithmic graph distances), but these edges are typically very expensive. We then show inductively the deterministic statement that once a box is good, the cost-distance is ``large''  between any pair of vertices inside the box with Euclidean distance linear in the box size.   
By ``large'', we mean either linear, or polynomial with an exponent less than one, in the Euclidean distance, depending on the model parameters,  which is 
unlike the linear graph-distances in~\cite{berger2004lower}. Polynomial cost-distances occur when $\mu\in(\mu_{\mathrm{log}},\mu_{\mathrm{pol}})$. For the inductive step, for any two sufficiently distant vertices $u$, $v$ in the same good box $Q$, we use that any path $\pi_{u,v}$ between them must either \emph{(i)} contain a long and thus expensive edge, or \emph{(ii)} has many long disjoint sub-segments in good child-boxes of $Q$, whose costs we can lower-bound by induction. See Figure~\ref{fig:lower-bound-proof}.

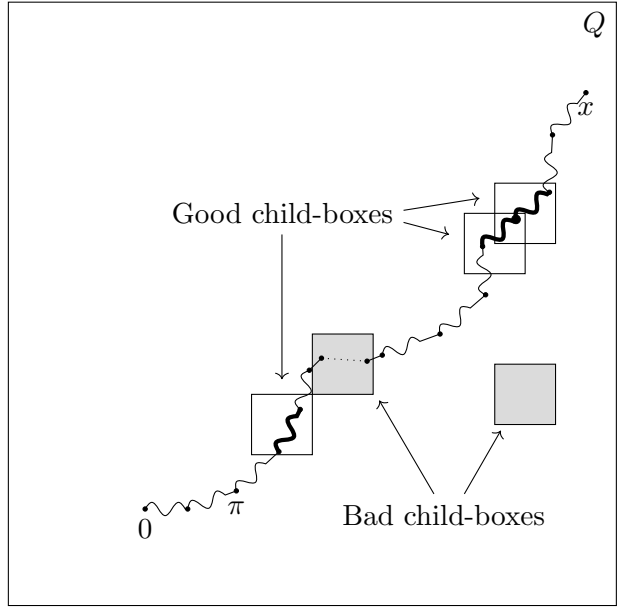
\begin{figure}[t]

\centering
\begin{tikzpicture}[scale=0.4]
    \draw[black] (-10,-10) rectangle (10,10) node[anchor=north east] {$Q$};
    \draw[fill=grey] (0,-3) rectangle (2,-1);
    \draw[fill=grey] (6,-4) rectangle (8,-2);

    \filldraw[black] (9,7) circle (2pt) node[anchor=north] {$x$};
    \filldraw[black] (-5.5,-6.8) circle (2pt) node[anchor=north] {$0$};
    
    \draw[snake=coil,segment aspect=0] (-5.5,-6.8) -- (-4.1,-6.8);
    \filldraw[black] (-4.1,-6.8) circle (2pt);
    \draw[snake=coil,segment aspect=0]  (-4.1,-6.8) -- (-2.5,-6.2) node[anchor=north] {$\pi$};
    \filldraw[black] (-2.5,-6.2) circle (2pt);
    \draw[snake=coil,segment aspect=0] (-2.5,-6.2) -- (-1.1,-4.9) ;
    \filldraw[black] (-1.1,-4.9) circle (2pt);
    \draw[snake=coil,segment aspect=0, ultra thick] (-1.1,-4.9) -- (-0.4,-3.5);
    \draw (-2,-5) rectangle (0,-3);
    \filldraw[black] (-0.4,-3.5) circle (2pt);
    \draw[snake=coil,segment aspect=0] (-0.4,-3.5) -- (-0.1,-2.2);
    \filldraw[black] (-0.1,-2.2) circle (2pt);
    \draw (-0.1,-2.2) -- (0.3,-1.8);
    \filldraw[black] (0.3,-1.8) circle (2pt);
    \draw[dotted] (0.3,-1.8) -- (1.8,-1.9);
    \filldraw[black] (1.8,-1.9) circle (2pt);
    \draw (1.8,-1.9) -- (2.3,-1.7);
    \filldraw[black] (2.3,-1.7) circle (2pt);
    \draw[snake=coil,segment aspect=0] (2.3,-1.7) -- (4.2,-1);
    \filldraw[black] (4.2,-1) circle (2pt);
    \draw[snake=coil,segment aspect=0] (4.2,-1) -- (5.7,0.3);
    \filldraw[black] (5.7,0.3) circle (2pt);
    \draw[snake=coil,segment aspect=0] (5.7,0.3) -- (5.6,1.9);
    \filldraw[black] (5.6,1.9) circle (2pt);
    \draw[snake=coil,segment aspect=0, ultra thick] (5.6,1.9) -- (6.7,2.8);
    \draw (5,1) rectangle (7,3);
    \filldraw[black] (6.7,2.8) circle (4pt);
    \draw (6,2) rectangle (8,4);
    \draw[snake=coil,segment aspect=0, ultra thick] (6.7,2.8) -- (7.8,3.7);
    \filldraw[black] (7.8,3.7) circle (2pt);
    \draw[snake=coil,segment aspect=0] (7.8,3.7) -- (7.9,5.6);
    \filldraw[black] (7.9,5.6) circle (2pt);
    \draw[snake=coil,segment aspect=0] (7.9,5.6) -- (9,7);

    \draw (3,3) node[left] {Good child-boxes};
    \draw[->] (-1,2.3)  -- (-1,-2.5);
    \draw[->] (3,2.7)  -- (4.5,2.3);
    \draw[->] (3,3.1)  -- (5.5,3.5);

    \draw (8,-7) node[left] {Bad child-boxes};
    \draw[->] (4,-6.3)  -- (2.2,-3.2);
    \draw[->] (5,-6.3)  -- (6.2,-4.2);
\end{tikzpicture}

\caption{\small Illustration of the inductive step for the lower bound proof. The big box is good, hence all edges in it that are longer than one-hundredth of the smaller boxes' side-length are already too expensive.  So, any cheap-enough path $\pi$ between $0$ and $x$ contained in $Q$ only uses edges shorter than that. There are only a few bad child-boxes, since $Q$ is good, and we remove the edges of $\pi$ with at least one end-point in bad child-boxes. We select from the remaining edges of $\pi$ enough sub-segments such that each sub-segment is fully contained in a good child-box of $Q$ and connects two sufficiently far vertices within that child-box. Three sub-segments are shown in bold, together with their corresponding child-boxes. By induction we have a lower bound on the costs of these sub-segments, and summing up those yields a lower bound on $\cost{\pi}$.}
\label{fig:lower-bound-proof}
\end{figure}

We give the definition of good boxes in the FPP setting in Section~\ref{sec:good-boxes}, after calculating the probability that long but cheap edges exist in a box in Section~\ref{sec:long-cheap-edges}.  In Section~\ref{sec:inside-good-blocks} we then show that there are no cheap paths \emph{within} a good box, and in Section~\ref{sec:lower_bound_polynomial} that there are no cheap paths at all, excluding thus cheap paths that leave and return to the same box.  We avoid Kingman's subadditive theorem in Proposition \ref{prop:path_in_good_block}, see also the proof on page \pageref{proof:linear_polynomial_lower_bound}.
Finally, in Section~\ref{sec:limit-cases-lower} we show the analogous lower bounds for $\alpha = \infty$ and/or $\beta = \infty$ in Theorem~\ref{thm:threshold_regimes} using coupling arguments.

The proof of the upper bound (that we sketched already in Section \ref{proof:idea-upper}) can be found in Section \ref{sec:linear-regime}.

\subsection{Notation}\label{sec:notation}

The graphs in this paper are undirected, simple graphs with vertex set $\mathcal V \subseteq \R^d$. For a graph $G=(\mathcal V,\mathcal E)$ and a set $A\subseteq \R^d$, we denote by $G[A]$ the induced subgraph of $G$ with vertex set $\mathcal V \cap A$. For two vertices $x,y\in \mathcal V$, we denote the edge between them by $xy$. For a path $\pi$, we denote the number of edges in $\pi$ by $|\pi|$. 

Given a cost function $\mathcal C: \mathcal E \to [0,\infty]$ on the edges, the cost of a collection of edges $\pi$ is the sum of the costs of the edges in the collection, $\cost{\pi}:=\sum_{e\in\pi}\cost{e}$. For convenience we define $\calC(xx):=0$ for all $x\in \calV$. We define the \emph{cost-distance} between vertices $x$ and $y$ as
\begin{align}\label{eq:cost_distance}
    d_{\mathcal{C}}(x,y):=\inf\{\cost{\pi} : \pi \textnormal{ is a path from } x \textnormal{ to } y \textnormal{ in $G$}\}.
\end{align}
We define the graph distance $d_G(x,y)$ similarly, when we set all edge-costs to $1$. The graph will usually be clear from the context.

We denote the Euclidean norm of $x \in \mathbb R^d$ by $|x|$. We denote by $B_r(x) := \{y\in \R^d : |x-y|\le r\}$ the Euclidean ball with radius $r\geq 0$ around $x$, and by $\mathcal B_r(x):=\{y\in\mathcal V : |x-y|\le r\} = B_r(x) \cap \mathcal V$ the set of vertices in this ball. The minimal notation difference is intentional, as we do not expect any confusion to arise between the two sets. The \emph{graph-distance ball} and \emph{cost-distance ball} (or \emph{cost-ball} for short) around a vertex~$x$ are the vertex sets $\mathcal B_r^G(x):=\{y\in\mathcal V : d_G(x,y)\le r\}$ and $\mathcal B_r^{\mathcal C}(x):=\{y\in\mathcal V : d_{\mathcal C}(x,y)\le r\}$, respectively. We set $B_r := B_r(0)$, and similarly for $\mathcal B_r$, $\mathcal B_r^G$, and $\mathcal B_r^{\mathcal C}$, if $0$ is a vertex. We denote by $\partial B_r(x) := B_r(x)\setminus \{y\in \R^d : |x-y| < r\}$, and similarly for $\partial \calB_r$, $\partial \calB_r^G$, and $\partial \calB_r^{\calC}$. In particular, $\partial \calB_1^G(v)$ is the neighbourhood of a vertex $v$.

For parameters $a,b >0$ (model parameters or otherwise), we use ``for all $a\ggs b$'' as shortcut for ``$\forall b>0:\, \exists a_0 = a_0(b):\, \forall a\ge a_0$''. We also say ``$a \ggs b$'' to mean that $a \ge a_0(b)$. We use $a \lls b$ analogously, and may use more than two parameters. For example, ``for $a\ggs b,c$'' means ``$\forall b,c>0:\, \exists a_0=a_0(b,c):\, \forall a \ge a_0$''. A measurable function $\ell:(0,\infty) \to (0,\infty)$ is \emph{slowly varying} if $\lim_{x\to \infty} \ell(ax)/\ell(x) =1$ for all $a>0$. We denote the natural logarithm by $\log$. For $n \in \N$ we abbreviate $[n]:= \{1,\ldots,n\}$. For $S \subseteq \R^d$, we denote the Lebesgue measure of $S$ by $\vol(S)$.
\section{Lower Bounds}\label{sec:lower-bound}

The main part of this section is the proof of Theorem~\ref{thm:linear_polynomial_lower_bound}, which in turn implies the lower bounds in Theorem~\ref{thm:polynomial_regime} and Theorem~\ref{thm:linear_regime}. Note that the corresponding lower bound for GIRG in Theorem~\ref{thm:finite_graph} also follows trivially since GIRG is a subgraph of IGIRG. 
Throughout the section, we will maintain Assumption~\ref{assu:L} on $L$ unless explicitly noted otherwise.

\medskip

Before we start with the proof, we note an elementary lemma stating that the product of two random variables with regularly varying tails sharing the same tail exponent again has a regularly varying tail with the same tail exponent.
\begin{lemma}\label{lem:product_distribution}
	Let $X, Y$ be two independent positive random variables with cumulative distributions $F_X(x)=1-\ell_1(x)x^{-\tau}$ and $F_Y(y)=1-\ell_2(y)y^{-\tau}$, respectively, where $\ell_1$ and $\ell_2$ are slowly varying functions. Then the cumulative distribution of their product $Z:=XY$ is given by $F_Z(z)=1- \ell^{\star}(z)z^{-\tau}$ for some slowly varying function $\ell^{\star}$.
\end{lemma}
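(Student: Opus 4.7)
The plan is to express $\bar F_Z(z):=1-F_Z(z)$ as a convolution-type integral by conditioning on $Y$, substitute the regular-variation form of $\bar F_X$, and extract the slowly varying prefactor. Concretely, conditioning on $Y=y$ gives
$$\bar F_Z(z) \;=\; \int_0^\infty \bar F_X(z/y)\,dF_Y(y),$$
and (after extending $\ell_1(u):=u^\tau\bar F_X(u)$ continuously to all $u>0$, which preserves its being slowly varying at $\infty$) pulling out $z^{-\tau}$ yields
$$\ell^{\star}(z) \;:=\; z^\tau \bar F_Z(z) \;=\; \int_0^\infty \ell_1(z/y)\,y^\tau\,dF_Y(y).$$
By construction $F_Z(z)=1-\ell^{\star}(z)\,z^{-\tau}$, so the lemma reduces to showing that $\ell^{\star}$ is slowly varying at infinity.

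To verify slow variation, I would show that $\ell^{\star}(cz)/\ell^{\star}(z)\to 1$ as $z\to\infty$ for every fixed $c>0$. Writing
$$\ell^{\star}(cz)-\ell^{\star}(z) \;=\; \int_0^\infty\bigl[\ell_1(cz/y)-\ell_1(z/y)\bigr]\,y^\tau\,dF_Y(y),$$
the pointwise ratio $\ell_1(cz/y)/\ell_1(z/y)\to 1$ by slow variation of $\ell_1$. To make this rigorous, I would split the integration domain into (i) a range where $z/y$ lies in a fixed compact subset of $(0,\infty)$, on which the uniform convergence theorem for slowly varying functions gives $\ell_1(cu)/\ell_1(u)\to 1$ uniformly in~$u$; and (ii) the two complementary tail ranges (say $y\le z^{1/3}$ and $y\ge z^{2/3}$), which I would control via Potter's bound $\ell_1(u)\le C_\delta\max\{u^\delta,u^{-\delta}\}$ for any $\delta>0$, combined with the explicit tail $dF_Y(y)\asymp \ell_2(y)\,y^{-\tau-1}\,dy$. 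Summing the three contributions gives $|\ell^{\star}(cz)-\ell^{\star}(z)|=o(\ell^{\star}(z))$.

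The main obstacle is that $\int_0^\infty y^\tau\,dF_Y(y)=\mathbb E[Y^\tau]$ is borderline divergent (since $F_Y(y)\sim 1-\ell_2(y)\,y^{-\tau}$), so $\ell^{\star}(z)$ is typically unbounded as $z\to\infty$, often growing like $\log z$ up to further slowly varying factors. This rules out a naive dominated-convergence argument and forces the three-region splitting above. Fortunately, the same divergence is what saves the argument: once Potter bounds are used, the tail-region errors are of constant order (or smaller), which is $o(\ell^{\star}(z))$ precisely because $\ell^{\star}(z)\to\infty$, so they can be absorbed into the main term and the ratio converges to $1$ as required.
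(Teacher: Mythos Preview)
The paper does not actually prove this lemma; it cites it as a direct consequence of \cite[Corollary, p.~3]{embrechts1980closure}, the classical closure result for regularly varying tails under product convolution. Your proposal attempts a self-contained argument, which is a different and more ambitious route.

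The overall strategy---write $\ell^\star(z)=\int_0^\infty \ell_1(z/y)\,y^\tau\,dF_Y(y)$ and verify slow variation by a region split with Potter bounds---is the standard shape of such proofs, but the sketch has two genuine gaps.

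First, your region split is internally inconsistent. You describe region (i) as the set where ``$z/y$ lies in a fixed compact subset of $(0,\infty)$'', yet take the complementary tails to be $y\le z^{1/3}$ and $y\ge z^{2/3}$; the complement of those tails is $y\in[z^{1/3},z^{2/3}]$, on which $z/y\in[z^{1/3},z^{2/3}]$ is \emph{not} a fixed compact set. Moreover, the uniform convergence theorem asserts $\ell_1(cu)/\ell_1(u)\to 1$ uniformly for $c$ in compacta as $u\to\infty$; it says nothing when $u$ itself sits in a fixed compact set, since then no limit is being taken. What you actually need on the main region is that $u=z/y\to\infty$ uniformly, which your middle range does give---so the step can be salvaged, but the stated justification is wrong.

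Second, and more seriously, your endgame assumes $\ell^\star(z)\to\infty$ in order to absorb ``constant-order'' tail errors. This can fail: if $\ell_2$ decays fast enough that $\int^\infty \ell_2(y)/y\,dy<\infty$ (e.g.\ $\ell_2(y)\asymp(\log y)^{-2}$), then $\mathbb E[Y^\tau]<\infty$ and $\ell^\star$ may remain bounded, so $O(1)$ errors are not $o(\ell^\star(z))$. A correct argument must either show the tail contributions to $\ell^\star(cz)-\ell^\star(z)$ are $o(\ell^\star(z))$ directly (comparing them to the main-region contribution rather than to an absolute constant), or handle the finite-$\mathbb E[Y^\tau]$ case separately by a Breiman-type dominated convergence. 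Either route works but is more delicate than your sketch indicates, which is presumably why the paper defers to the literature.
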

\begin{proof}
	This is a consequence of~\cite[Corollary, Page 3]{embrechts1980closure}.
\end{proof}

\subsection{Upper bound on the number of long and cheap edges}\label{sec:long-cheap-edges}
As mentioned in Section \ref{sec:outline}, we start with developing a renormalisation argument. 
The argument builds on the basic observation that a box is unlikely to contain an edge that is both long in Euclidean distance and cheap, i.e., its cost is small. This follows from a straightforward but tedious calculation and is summarised in the following lemma. The more interesting part of the proof comes afterwards, when we turn this basic property into the statement that all paths between a vertex $x$ and $0$ are at least polynomially expensive.

\begin{lemma}\label{lem:no_long_cheap_edge}
Consider $1$-FPP on IGIRG or SFP of Definition \ref{def:girg} satisfying the assumptions given in \eqref{eq:power_law}--\eqref{eq:F_L-condition} with $0\in\calV$. Assume that $\tau\in(2,\infty)$ and $\beta>0$.
    For all $\eps > 0$, if $N>0$ is sufficiently large relative to $\eps$, then the following holds. Let $A>N$ and $a > 0$. Then the expected number of edges in $[-A/2,A/2]^d$ with length at least $N$ and cost at most $N^{ad}$ is at most
	\begin{align}\label{eq:no_long_cheap_edge}
	 \begin{cases}
	A^dN^{\eps}\Big( N^{-d(\alpha-1)} + N^{-d(\tau-2)}\Big) & \mbox{ if $a\ge\mu$,}\\
	A^dN^{\eps}\Big(N^{-d(\alpha-1)} + N^{-d(\alpha - 1 - \frac{a}{\mu}(\alpha-(\tau-1)))} + N^{-d(\tau-2+(\mu-a)\beta)} \Big) & \mbox{ if $a<\mu$.}
	\end{cases}
\end{align}
	The formula for the case $a \ge \mu$ remains valid without the restriction on the cost of the edges, and in this case $L$ does not need to satisfy Assumption \ref{assu:L}. Further, for IGIRG, the formula is also valid under the Palm measures $\mathbb P^0$ or $\mathbb P^{0,x}$ where we condition on having a vertex with unknown weight at $0\in \R^d$ or on having two vertices with unknown weight at location $0,x\in \R^d$, respectively.
\end{lemma}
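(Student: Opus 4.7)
I will express the expected count via Campbell's formula for IGIRG (or by direct summation over $\mathbb{Z}^d$ for SFP) as an integral over pairs $(x,y) \in Q^2$ with $|x-y| \ge N$ of the joint probability that (i) the edge $xy$ is present and (ii) its cost is at most $N^{ad}$. Bounding the former by $\overline{c}\min\{1, W_x W_y/|x-y|^d\}^{\alpha}$ via \eqref{eq:connection_prob} and the latter by $F_L(N^{ad}/(W_xW_y)^\mu)$, then changing variables $z=x-y$ (which yields an outer factor $A^d$) and switching to spherical coordinates in $z$, the task reduces to controlling
\[
A^d \int_N^\infty r^{d-1}\, \mathbb{E}\!\Bigl[\min\{1, Z/r^d\}^\alpha\, F_L(N^{ad}/Z^\mu)\Bigr] dr,
\]
where $Z := W_1 W_2$. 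By Lemma~\ref{lem:product_distribution}, $\Pr(Z > z) = \ell^\star(z)\, z^{-(\tau-1)}$ for some slowly varying $\ell^\star$; Potter's bounds will then absorb any such slowly varying factor into $N^\eps$ once $N$ is large relative to $\eps$.

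\textbf{Case $\boldsymbol{a \ge \mu}$.} The cost restriction is discarded using $F_L \le 1$ (no hypothesis on $L$ needed). Splitting the inner expectation at $Z = r^d$ and using integration by parts on the truncated $\alpha$-moment,
\[
\mathbb{E}[\min\{1, Z/r^d\}^\alpha] \;\lesssim\; \ell^\star(r^d)\, r^{-d(\tau-1)} + r^{-d\alpha},
\]
where the first summand covers $\{Z>r^d\}$ together with the large-$Z$ part of $\{Z \le r^d\}$ when $\alpha > \tau - 1$, and the second covers the small-$Z$ part when $\alpha < \tau - 1$. Multiplying by $r^{d-1}$ and integrating over $r\ge N$ converges at infinity (since $\tau > 2$ and $\alpha > 1$) and is dominated by the lower endpoint, yielding $A^d N^{\eps}(N^{-d(\tau-2)} + N^{-d(\alpha-1)})$.

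\textbf{Case $\boldsymbol{a < \mu}$.} Now Assumption~\ref{assu:L} enters: $F_L(s) \le c_2 s^\beta$ holds only for $s \le t_0$, that is, only once $Z \ge Z_\star := (N^{ad}/t_0)^{1/\mu}$. Because $a/\mu < 1$ we have $Z_\star \le c\cdot N^d \le r^d$ for every $r \ge N$, so the three slices $\{Z \le Z_\star\},\ \{Z_\star < Z \le r^d\},\ \{Z > r^d\}$ partition the $Z$-axis usefully. On the first slice, $F_L \le 1$ and $\min^\alpha = (Z/r^d)^\alpha$; the truncated moment $\mathbb{E}[Z^\alpha \ind{Z \le Z_\star}]$ is of order $Z_\star^{\alpha - (\tau-1)}\ell^\star(Z_\star)$ when $\alpha > \tau-1$ and of constant order when $\alpha < \tau-1$, producing respectively term~2 ($N^{-d(\alpha-1-(a/\mu)(\alpha-(\tau-1)))+\eps}$) and term~1 ($N^{-d(\alpha-1)}$) after the $r$-integration. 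On the third slice $\min^\alpha = 1$ and $F_L \le c_2 N^{ad\beta} Z^{-\mu\beta}$, so $\mathbb{E}[Z^{-\mu\beta}\ind{Z > r^d}] \le r^{-d\mu\beta}\Pr(Z > r^d)$, and the $r$-integration yields term~3 ($N^{-d(\tau-2+(\mu-a)\beta)+\eps}$). On the middle slice the integrand equals $c_2 N^{ad\beta} r^{-d\alpha} Z^{\alpha-\mu\beta}$; the resulting truncated moment $\mathbb{E}[Z^{\alpha-\mu\beta}\ind{Z_\star < Z \le r^d}]$, evaluated via $\Pr(Z > z) \sim \ell^\star(z) z^{-(\tau-1)}$, is dominated by the lower endpoint $Z_\star$ when $\alpha - \mu\beta < \tau-1$ (reproducing term~2) and by the upper endpoint $r^d$ when $\alpha - \mu\beta > \tau-1$ (reproducing term~3). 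Summing gives the three-term bound.

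\textbf{Main obstacle and remaining remarks.} The principal challenge is the careful bookkeeping across these sub-cases (the signs of $\alpha - (\tau-1)$ and $\alpha - \mu\beta - (\tau-1)$), and verifying in each that the radial integral $\int_N^\infty r^{(\cdot)-1}dr$ converges at infinity so that only the endpoint $r=N$ contributes. Slowly varying factors at $r^d$ or $Z_\star$ are absorbed uniformly into $N^{\eps}$ via Potter's bounds once $N \ggs \eps$. The Palm-measure versions $\mathbb{P}^0, \mathbb{P}^{0,x}$ follow because conditioning on $0 \in \calV$ (resp.\ $0, x \in \calV$) only adds one or two Dirac masses to the Poisson intensity, whose contribution to the pair count is $O(A^d)$ times the same bound (up to constants) since the unknown weights of these deterministic vertices enter the analysis exactly as any other Poisson vertex.
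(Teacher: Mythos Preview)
Your proposal is correct and follows essentially the same approach as the paper: both express the expected count as $A^d$ times a radial integral of $\Lambda(r)=\mathbb{E}[\min\{1,Z/r^d\}^\alpha F_L(N^{ad}/Z^\mu)]$ with $Z=W_xW_y$, invoke Lemma~\ref{lem:product_distribution} for the tail of $Z$, and then split at $Z=r^d$ (case $a\ge\mu$, using $F_L\le 1$) or at $Z=Z_\star=t_0^{-1/\mu}N^{ad/\mu}$ and $Z=r^d$ (case $a<\mu$), handling the sub-cases on the sign of $\alpha-(\tau-1)-\mu\beta$ exactly as the paper does and absorbing slowly varying factors via Potter's bound.
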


Before the proof, let us informally explain the formula, suppressing slowly varying factors in the discussion. The factor $A^d$ is simply the (expected) number of vertices in $[-A/2,A/2]^d$. The term $N^{\eps}$ comes from applying Potter's bound~\cite{bingham1989regular} to bound the slowly varying function that appears in the distribution function of the vertex weights in \eqref{eq:power_law} when we integrate over the distribution of the products of weights $W_uW_v$ in the connectivity function in \eqref{eq:connection_prob}. 

For the case $a\geq \mu$, the two terms in the brackets represent two types of edges. Consider a vertex $v$ of constant weight. Then first term $N^{-d(\alpha-1)}$ counts the expected number of neighbours $u$ in distance $\sim N$ (e.g., in distance $[N,2N]$) of constant weight, up to negligible factors. The second term $N^{-d(\tau-2)}$ counts the number of neighbours $u$ in distance $\sim\! N$ of weight $\sim\! N^{d}$, which is exactly the weight needed to get constant connection probability from~\eqref{eq:connection_prob}. When $a\geq \mu$, the cost of such edges is typically less than $N^{ad}$, which is why we can ignore the cost condition in this case. There are potentially other possibilities, but their contribution to the expectation is negligible. 

In the case $a < \mu$, the first term $N^{-d(\alpha-1)}$ is similar to the case $a\ge \mu$, coming from edges between vertices of constant weight, and so the cost is typically constant as well. The third term $N^{-d((\tau-2)+ (\mu-a)\beta)}$ comes from edges between vertices of weights $W_v \sim 1$ and $W_u \sim N^d$. These receive a cost penalty of $N^{d\mu}$, so the probability that the edge has cost at most $N^{ad}$ is roughly $F_L(N^{-d(\mu-a)}) \approx N^{-d(\mu-a)\beta}$.
Finally, the second term $N^{-d(\alpha-1 - a/\mu(\alpha -(\tau-1)))}$ comes from edges between vertices of weights $W_v \sim 1$ and $W_u \sim N^{ad/\mu}$. With this weight, the typical cost of the edge is $N^{ad}$, so the cost condition is satisfied. For fixed $v$, there are $\Theta(N^{d})$ vertices $u$ in distance $\sim\! N$, they have probability $N^{-(ad/\mu)(\tau-1)}$ to be of weight $W_u \sim N^{ad/\mu}$, and the probability to be adjacent to $v$ with this weight is $(N^{ad/\mu}/N^{d})^\alpha$ by~\eqref{eq:connection_prob}. (All up to negligible terms.) Together, this yields the second term. Note that the term $(N^{ad/\mu}/N^{d})^\alpha$ for the connection probability is only correct if the bracket is at most one, i.e., if $a<\mu$. 
\begin{proof}[Proof of Lemma~\ref{lem:no_long_cheap_edge}]
Throughout this proof, we will denote by $C_1, C_2, \ldots$ finite positive constants depending on $\eps$ and the set $\mpar$ of model parameters. For readability, we allow these to appear in the middle of a calculation without necessarily being defined beforehand. 
Note that the statement of the lemma is stronger for smaller $\eps>0$. So without loss of generality, we can assume that we take a $\eps>0$ small enough so that
	\begin{align}\label{eq:delta_no_long_cheap_edge_1}
	    -d(\tau-2)+\eps/2<0 \quad \textnormal{and} \quad -d(\alpha-1)+\eps/2<0.
	\end{align}
	If $\alpha-(\tau-1)-\mu\beta<0$, we can also assume that
	\begin{align}\label{eq:delta_no_long_cheap_edge_2}
	    \alpha-(\tau-1)-\mu\beta+\eps \frac{\mu}{2ad}<0.
	\end{align}
	Let $E(A,N,a)$ denote the expected number of edges in $[-A/2,A/2]^d$ with length at least $N$ and cost at most $N^{ad}$. We first compute $E(A,N,a)$ for SFP, having vertex set $\Z^d$. Let
	\begin{equation}\label{eq:lambda_r}
	    \Lambda(r):=\E[(1 \wedge W_xW_y/r^d)^{\alpha} F_L(N^{ad}(W_xW_y)^{-\mu})].
	\end{equation} 
	Using conditional expectation we have	
	\begin{align*}
		E(A,N,a) &=
		\sum_{\substack{x,y \in [-A/2,A/2]^d\cap \Z^d\\|x-y| \ge N}}
		\E\left[\mathbbm{1}_{\{xy \textnormal{ is an edge}\}}\cdot\mathbbm{1}_{\{\cost{xy}\le N^{ad}\}}\right] \\&\le
		\sum_{\substack{x,y \in[-A/2,A/2]^d\cap\Z^d\\N\le|x-y|\le dA}}
		\E\left[\overline{c}\left(1 \wedge \dfrac{W_xW_y}{|x-y|^d}\right) ^\alpha \cdot F_L(N^{ad}(W_xW_y)^{-\mu}) \right] \\
		&= \overline{c} \sum_{x \in [-A/2,A/2]^d\cap \Z^d}
		\sum_{\substack{y \in [-A/2,A/2]^d\cap \Z^d\\N\le|x-y|\le dA}}
		\Lambda(|x-y|).
	\end{align*}
	Note that the number of vertices $\newconstant{cst:no_long_cheap_edge2}|\Z^d\cap [-A/2,A/2]^d|\le \useconstant{cst:no_long_cheap_edge2}A^d$. In order to simplify calculations, we will replace the second sum by an integral. More precisely, by usual isoperimetric inequalities for $\Z^d$, there is a constant $\newconstant{cst:no_long_cheap_edge3}\useconstant{cst:no_long_cheap_edge3}=\useconstant{cst:no_long_cheap_edge3}(d)$ such that 
	\begin{align}\label{eq:ENA}
		E(A,N,a)\le \useconstant{cst:no_long_cheap_edge3}A^d \int_{r=N}^{dA} r^{d-1} \Lambda(r)\,\mathrm{d}r.
	\end{align}
	For IGIRG, we obtain the same formula~\eqref{eq:ENA} in a simpler way. In expectation, there are $A^d$ vertices in $[-A/2,A/2]^d$. For any fixed vertex, the expected number of neighbours in IGIRG in distance between $N$ and $dA$ with edge cost at most $N^{ad}$ is given by the integral $\int_{r=N}^{dA} r^{d-1} \Lambda(r)\,\mathrm{d}r$. Since $dA$ bounds the diameter of $[-A/2,A/2]^d$, this includes all neighbours of this type in $[-A/2,A/2]^d$. Hence, the upper bound~\eqref{eq:ENA} also holds for IGIRG and its subgraph GIRG. The same bound remains true under the Palm measures $\mathbb P^{0},\mathbb P^{0,x}$ by possibly increasing the constant prefactor $C_2$ to account for the edges emanating from the two extra vertices.  
		
	Next we bound $\Lambda(r)$ in \eqref{eq:ENA}. Defined in  
 \eqref{eq:lambda_r}, $\Lambda(r)$ only depends on the product $W_xW_y=:Z$, not on the individual weights of the two vertices. By Lemma~\ref{lem:product_distribution}, the distribution of $Z$ is of the form $f_Z(z)=\ell^{\star}(z)z^{-\tau}$, where $\ell^{\star}$ is a slowly varying function. For the sake of simplicity, we will work with $Z$ having a density, but a proof using only Lebesgue-Stieltjes integration is similar. 
We recall from \eqref{eq:ENA} that $r>N$ and rewrite  \eqref{eq:lambda_r} using law of total probability as
\begin{equation}\label{eq:lambda-r-detailed}
    \Lambda(r)= \int_{z=1}^{\infty} \left(1\wedge\dfrac{z}{r^d}\right)^\alpha \cdot F_L(N^{ad}z^{-\mu}) \cdot \dfrac{\ell^{\star}(z)}{z^{\tau}}\,\mathrm{d}z.
\end{equation} 
	We now split into cases depending on the value of $a$.
	
	\medskip\noindent\textbf{Case 1: $\boldsymbol{a \ge \mu}$.}
	In this case we first show that 
 for all $\eps>0$,  for all sufficiently large $r$ (i.e., larger than some $r_0(\eps)$), 
 \newconstant{local-lambda}
\begin{equation}\label{eq:local-lambda}
    \Lambda(r)\le \useconstant{local-lambda} (r^{-d(\tau-1)} + r^{-d\alpha})r^{\eps/2}. \end{equation}
	We split the inner integral of~\eqref{eq:lambda-r-detailed} in two parts, at $r^d$. The first part is given by
	\begin{equation}
	\begin{aligned}\label{eq:I_1}
		I_1 := \int_{z=1}^{r^d} \left(1\wedge\dfrac{z}{r^d}\right)^\alpha \cdot\underbrace{F_L(N^{ad}z^{-\mu})}_{\le 1} \cdot \dfrac{\ell^{\star}(z)}{z^{\tau}} \,\mathrm{d}z \le r^{-\alpha d} \int_{z=1}^{r^d} z^{\alpha-\tau}\ell^{\star}(z) \,\mathrm{d}z.
	\end{aligned}
	\end{equation}
	Since $r>N$, and we will later let $N\to \infty$, we can use Karamata's theorem \cite[Prop. 1.5.6]{bingham1989regular}, and obtain that for $\alpha-(\tau-1)>0$, 
	\newconstant{cst:no_long_cheap_edge5}
	\begin{align}\label{eq:I1-case1}
	   I_1\le r^{-\alpha d} \int_{z=1}^{r^d}z^{\alpha-\tau}\ell^{\star}(z)\,\mathrm{d}z \le r^{-\alpha d} \useconstant{cst:no_long_cheap_edge5}r^{d(\alpha-(\tau-1))}\ell^{\star}(r^d)\le \useconstant{cst:no_long_cheap_edge5}r^{-d(\tau-1)+\varepsilon/2}
	\end{align}
	for $N$ (and thus $r$) large enough, where we used Potter's bound~\cite{bingham1989regular} to get $\ell^{\star}(r)=o(r^{\eps/(2d)})$ as $r\to\infty$, and thus for sufficiently large $N$ (and hence $r$) we obtain that $I_1$ in \eqref{eq:I_1} in the $\alpha>\tau-1$ case is bounded from above by the first term in \eqref{eq:local-lambda}.
	If $\alpha-\tau+1=0$, we again use $\ell^{\star}(z)=o(z^{\eps/(2d)})$ as $z\to\infty$ to get
	\newconstant{cst:no_long_cheap_edge7} \newconstant{cst:no_long_cheap_edge8}
	\begin{align}\label{eq:case1-I1-2}
	    I_1 \le \useconstant{cst:no_long_cheap_edge7}r^{-\alpha d}\int_{z=1}^{r^d} z^{\eps/(2d)-1}\,\mathrm{d}z \le \useconstant{cst:no_long_cheap_edge8}r^{-\alpha d + \eps/2},    
	\end{align}
	which is the second term in \eqref{eq:local-lambda}.	Finally, when $\alpha<\tau-1$, we use Potter's bound~\cite{bingham1989regular} to get\newconstant{cst:no_long_cheap_edge9}
		$\ell^{\star}(z)\le 	\useconstant{cst:no_long_cheap_edge9} z^{\eps}$ as $z\to\infty$, and since $\alpha-\tau+\eps < -1$ we get that the integral is bounded by some constant and hence the bound \eqref{eq:case1-I1-2} remains valid.
	Combining equations~\eqref{eq:I1-case1}--\eqref{eq:case1-I1-2}, we obtain that regardless of the relation between $\alpha$ and $\tau-1$, $I_1$ satisfies the bound in \eqref{eq:local-lambda}.
	The second part of the inner integral in \eqref{eq:lambda-r-detailed}  is
	\begin{align*}
		I_2 := \int_{z=r^d}^{\infty} \left(1\wedge\dfrac{z}{r^d}\right)^\alpha \cdot F_L(N^{ad}z^{-\mu}) \cdot \dfrac{\ell^{\star}(z)}{z^{\tau}}\,\mathrm{d}z \le \int_{z=r^d}^{\infty} z^{-\tau}\ell^{\star}(z)\,\mathrm{d}z,
	\end{align*}
	since $F_L\le 1$ always holds.
	Since $r^d\rightarrow\infty$ as $N\rightarrow\infty$, Proposition 1.5.10 of~\cite{bingham1989regular} tells us that for $N$ large enough
	\newconstant{cst:no_long_cheap_edge12}
	\begin{align*}
	   I_2\le  \int_{z=r^d}^{\infty} z^{-\tau}\ell^{\star}(z)\,\mathrm{d}z \le \useconstant{cst:no_long_cheap_edge12}r^{-d(\tau-1)}\ell^{\star}(r^d)
	\end{align*}
Again using Potter's bound, for sufficiently large $N$ we have
that $I_2$ is dominated by the first term in \eqref{eq:local-lambda}. This finishes the proof that \eqref{eq:local-lambda} holds when $a\ge \mu$.

Returning the attention to $E(A, N, a)$ in \eqref{eq:ENA}, and using now \eqref{eq:local-lambda}, we have for $\tau>2$
    \newconstant{cst:no_long_cheap_edge14}
	\newconstant{cst:no_long_cheap_edge15}
	\begin{align*}
		E(A,N,a)&\le \useconstant{cst:no_long_cheap_edge3} A^d \int_{r=N}^{dA} r^{d-1}  \useconstant{local-lambda} (r^{-d(\tau-1)} + r^{-d\alpha})r^{\eps/2}\mathrm{d}r \\
		&= \useconstant{cst:no_long_cheap_edge14} A^d\int_{r=N}^{dA} r^{-d(\tau-2)+\eps/2-1} + r^{-d(\alpha-1)+\eps/2-1}\,\mathrm{d}r \\
		&\stackrel{\eqref{eq:delta_no_long_cheap_edge_1}}{\le} \useconstant{cst:no_long_cheap_edge15}A^dN^{\eps/2}\left(N^{-d(\tau-2)}+N^{-d(\alpha-1)}\right)
		\le A^dN^{\eps}\left(N^{-d(\tau-2)}+N^{-d(\alpha-1)}\right),
	\end{align*}
	where the last inequality holds for $N$ large enough. Thus we have proved~\eqref{eq:no_long_cheap_edge} when $a \ge \mu$. Observe that we used a trivial bound $F_L\le 1$ in the proofs, hence we get that the statement also holds without any restriction on the edge-costs and without Assumption~\ref{assu:L}. 
     
	\medskip\noindent\textbf{Case 2: $\boldsymbol{a < \mu}$.}
	In this case we also start bounding $\Lambda(r)$ in \eqref{eq:lambda-r-detailed} first.
	We recall the constant $t_0$ from \eqref{eq:F_L-condition}. Note that $t_0^{-1/\mu}N^{ad/\mu}$ is smaller than $N^d$ (and thus $r^d$) for $N$ large enough. We assume this inequality and split the  integral of~\eqref{eq:lambda-r-detailed} into three parts. In the first part we bound the factor $F_L$ by $1$ from above: 
	\begin{align*}
		\widetilde I_1 := \int_{z=1}^{t_0^{-1/\mu}N^{ad/\mu}} \left(1\wedge\dfrac{z}{r^d}\right)^\alpha \cdot F_L(N^{ad}z^{-\mu}) \cdot z^{-\tau}\ell^{\star}(z)\,\mathrm{d}z \le r^{-d\alpha}\int_{z=1}^{t_0^{-1/\mu}N^{ad/\mu}} z^{\alpha-\tau}\ell^{\star}(z)\,\mathrm{d}z.
	\end{align*}
	This is the same as $I_1$ in the previous case, except for the upper limit of integration. Using the same reasoning, we get that
	\newconstant{cst:no_long_cheap_edge16}
	\begin{align}\label{eq:I_1_bound_case2}
		\widetilde I_1 \le \useconstant{cst:no_long_cheap_edge16}(t_0)r^{-d\alpha} \left(N^{\eps/2}+N^{\frac{ad}{\mu}(\alpha-\tau+1)+\eps/2}\right)
	\end{align}
	for $N$ large enough.
	In the second part the argument of $F_L$ will be at most $t_0$, hence we can use that $F_L(t)\le c_2t^\beta$ in this regime:
	\begin{align*}
		\widetilde I_2 &:= \int_{z=t_0^{-1/\mu}N^{ad/\mu}}^{r^d} \left(1\wedge\dfrac{z}{r^d}\right)^\alpha \cdot F_L(N^{ad}z^{-\mu}) \cdot z^{-\tau}\ell^{\star}(z)\,\mathrm{d}z \\
		&\stackrel{\eqref{eq:F_L-condition}}{\le} c_2r^{-d\alpha} \int_{z=t_0^{-1/\mu}N^{ad/\mu}}^{r^d} z^{\alpha-\tau}(N^{ad}z^{-\mu})^{\beta}\ell^{\star}(z)\,\mathrm{d}z \\
		&= c_2N^{ad\beta}r^{-d\alpha} \int_{z=t_0^{-1/\mu}N^{ad/\mu}}^{r^d} z^{\alpha-\tau-\mu\beta}\ell^{\star}(z)\,\mathrm{d}z.
	\end{align*}
	If $\alpha-\tau-\mu\beta+1>0$, we can again use Proposition 1.5.8 of~\cite{bingham1989regular} on the integral to get that
	\newconstant{cst:no_long_cheap_edge17}
		\newconstant{cst:no_long_cheap_edge18}
	\begin{align}\label{eq:I2-tilde-new}
	 \widetilde I_2   \le c_2N^{ad\beta}r^{-d\alpha} \useconstant{cst:no_long_cheap_edge17}r^{d(\alpha-\tau-\mu\beta+1)}\ell^{\star}(r^d)   \le \useconstant{cst:no_long_cheap_edge18}N^{ad\beta}r^{-d(\tau+\mu\beta-1)+\eps/2}
	\end{align}
	for $N$ large enough by Potter's bound. 
	If $\alpha-\tau-\mu\beta+1=0$, we use $\ell^{\star}(z)=o(z^{\frac{\eps}{2d}})$ to get
	\newconstant{cst:no_long_cheap_edge19} \newconstant{cst:no_long_cheap_edge20}
	\begin{align}\label{eq:I_2_bound_case2-2}
	    \widetilde I_2 \le \useconstant{cst:no_long_cheap_edge19}N^{ad\beta}r^{-d\alpha} \int_{z=t_0^{-1/\mu}N^{ad/\mu}}^{r^d} z^{\frac{\eps}{2d}-1} \,\mathrm{d}z  \le  \useconstant{cst:no_long_cheap_edge20}N^{ad\beta}r^{-d(\tau-1+\mu\beta)+\eps/2}.
	\end{align}
	Finally, when $\alpha-\tau-\mu\beta+1<0$, we use Potter's bound to get $\ell^{\star}(z)=o(z^{\frac{\mu\eps}{2ad}})$, and since in this case we assume that~\eqref{eq:delta_no_long_cheap_edge_2} holds, the integral is convergent and we have
	\newconstant{cst:no_long_cheap_edge21} \newconstant{cst:no_long_cheap_edge22}
	\begin{align}\label{eq:I_2_bound_case2-3}
	    \widetilde I_2 \le \useconstant{cst:no_long_cheap_edge21}N^{ad\beta}r^{-d\alpha} \int_{z=t_0^{-1/\mu}N^{ad/\mu}}^{r^d} z^{\alpha-\tau-\mu\beta+\frac{\mu\eps}{2ad}} \,\mathrm{d}z \le \useconstant{cst:no_long_cheap_edge22}r^{-d\alpha}N^{\frac{ad}{\mu}(\alpha-\tau+1)+\eps/2}.
	\end{align}
	Combining equations~\eqref{eq:I2-tilde-new}--\eqref{eq:I_2_bound_case2-3}, and using the fact that $r \ge N$ in the inner integral, we get
	\newconstant{cst:no_long_cheap_edge23}
	\begin{align}\label{eq:I_2_bound_case2}
	\widetilde 	I_2\le \useconstant{cst:no_long_cheap_edge23}\left(N^{ad\beta}r^{-d(\tau-1+\mu\beta)} + r^{-d\alpha}N^{\frac{ad}{\mu}(\alpha-\tau+1)}\right) r^{\eps/2}.
	\end{align}
	Finally, the third part of the integral in $\Lambda(r)$ in \eqref{eq:lambda-r-detailed} is given by
	\begin{align*}
		\widetilde I_3 := \int_{z=r^d}^{\infty} \left(1\wedge\dfrac{z}{r^d}\right)^\alpha \cdot F_L(N^{ad}z^{-\mu}) \cdot z^{-\tau}\ell^{\star}(z)\,\mathrm{d}z \le c_2 N^{ad\beta} \int_{z=r^d}^{\infty}z^{-\mu\beta-\tau}\ell^{\star}(z)\,\mathrm{d}z.
	\end{align*}
	Again, we can apply Proposition 1.5.10 of~\cite{bingham1989regular} to treat the inner integral and get, by Potter's bound when $N$ is sufficiently large,
	\newconstant{cst:no_long_cheap_edge24}
	\newconstant{cst:no_long_cheap_edge25}
	\begin{align}\label{eq:I_3_bound_case2}
		\widetilde I_3 \le \useconstant{cst:no_long_cheap_edge24} N^{ad\beta}r^{-d(\tau-1+\mu\beta)}\ell^{\star}(r^d)\le  \useconstant{cst:no_long_cheap_edge25}N^{ad\beta}r^{-d(\tau-1+\mu\beta)+\eps/2}.
	\end{align}
	Combining~\eqref{eq:I_1_bound_case2},~\eqref{eq:I_2_bound_case2} and~\eqref{eq:I_3_bound_case2}, while keeping in mind that $r\ge N$, we get
	\newconstant{cst:no_long_cheap_edge26}
	\begin{align*}
	\Lambda(r)	= \widetilde I_1 + \widetilde I_2 + \widetilde I_3 \le \useconstant{cst:no_long_cheap_edge26}\left(r^{-d\alpha}+r^{-d\alpha}N^{\frac{ad}{\mu}(\alpha-\tau+1)} + N^{ad\beta}r^{-d(\tau+\mu\beta-1)}\right)r^{\eps/2}.
	\end{align*}
	Returning again to \eqref{eq:ENA}, using this bound on $\Lambda(r)$ we obtain
	\newconstant{cst:no_long_cheap_edge28} \newconstant{cst:no_long_cheap_edge27}
	\begin{align*}
		E(A,N,a)
		\le &\useconstant{cst:no_long_cheap_edge28}A^d\int_{r=N}^{dA} r^{-d(\alpha-1)+\eps/2-1} \left(1+N^{\frac{ad}{\mu}(\alpha-\tau+1)}\right) + r^{-d(\tau-2+\mu\beta)+\eps/2-1}N^{ad\beta}\,\mathrm{d}r \\
		\stackrel{\eqref{eq:delta_no_long_cheap_edge_1}}{\le} &\useconstant{cst:no_long_cheap_edge27}A^dN^{\eps/2} \left(N^{-d(\alpha-1)} + N^{-d(\alpha-1-\frac{a}{\mu}(\alpha-\tau+1))} + N^{-d(\tau-2+(\mu-a)\beta)}\right) \\
		\le &A^dN^{\eps}\left(N^{-d(\alpha-1)} + N^{-d(\alpha-1-\frac{a}{\mu}(\alpha-\tau+1))} + N^{-d(\tau-2+(\mu-a)\beta)}\right),
	\end{align*}
	where the last inequality holds for $N$ large enough. Thus we have proved~\eqref{eq:no_long_cheap_edge} when $a < \mu$.
\end{proof}
\setcounter{constant}{0}

\subsection{Good blocks}\label{sec:good-boxes}
In the renormalisation scheme, we shall cover the space with blocks (i.e., boxes) that are iteratively contained in larger and larger blocks. The smallest blocks are level-$1$ blocks, and we group them together in level-$2$ blocks, and so on. A level-$k$ block thus contains sub-blocks of level $k-1$, which in turn contains sub-blocks of level $k-2$, and so on until we reach the level-$1$ blocks. 
Now we turn to the definition of good blocks. Berger~\cite{berger2004lower} defined them as blocks that do not contain edges of linear length (in the block size), and additionally the same property must hold for certain translates and recursively for most of its sub-blocks. In our setting, we have to modify the definition since long edges do exist. However, we can define a block to be `good' if all edges of linear length are costly enough and arrive at the following definition. 

\begin{definition}\label{def:good_box}
    Let $k_0:=16\cdot30^d$, let $A_1>1$ and $u\in (0,1)$ be constants, and define $A_k := A_1(k!)^2$ for all $k \ge k_0$.  

    A \emph{$k$-block} is defined as a $d$-dimensional cube of side length $A_k$. For $k > k_0$, there is a natural partition of a $k$-block $Q$ into $k^{2d}$ many $(k-1)$-blocks; we call these the \emph{children} of $Q$. We denote by $Q_k$ the $k$-block centred at the origin.
    
    We will define the notion of goodness recursively. Let $\eta\in(0,1]$. We say that a $k_0$-block $Q$ is \emph{$\eta$-good} if every edge internal to $Q$ has cost at least $u$. For $k > k_0$ and a $k$-block $Q$, consider all $3^d$ $k$-blocks $Q'$ of the form $Q+jA_{k-1}/2$ for $j\in\{ -1, 0, 1 \}^d$. We say that $Q$ is \emph{$\eta$-good} if for all $Q'=Q+jA_{k-1}/2$:
	\begin{enumerate}[(i)]
		\item Every edge internal to $Q'$ of length larger than $A_{k-1}/100$ has cost at least $uA_k^{\eta}$.
		\item All but at most $3^d$ children of $Q'$ are $\eta$-good.
	\end{enumerate}	
\end{definition}

We will fix $A_1$ and then $u$ in Definition~\ref{def:good_box} such that they satisfy equations~\eqref{eq:u_cond}, \eqref{eq:A_1_cond_3}, \eqref{eq:A_1_cond_2}, \eqref{eq:A_1_cond_1} and~\eqref{eq:A_1_cond} below. We will also assume that $A_1$ is large enough so that the upper bound of Lemma~\ref{lem:no_long_cheap_edge} holds for certain choices of $N$ and $\eps$ specified case-by-case below in \eqref{eq:general_E(k)-2}, \eqref{eq:general_E(k)}, \eqref{eq:eps-1-def}, \eqref{eq:eps-2-def} and \eqref{eq:eps-3-def}.
Recall the definitions of $\mu_{\mathrm{pol}}$ from \eqref{eq:mu_pol_log} and $\eta_0$ from \eqref{eq:eta_0}.

\begin{proposition}\label{prop:large_box_good} 
Consider $1$-FPP on IGIRG or SFP of Definition \ref{def:girg} satisfying the assumptions given in \eqref{eq:power_law}--\eqref{eq:cost} with $0\in\calV$, but $L$ not necessarily satisfying Assumption~\ref{assu:L}.
Assume that 
	\begin{enumerate}[1)]
	\item\label{item:1}
	$\boldsymbol{\alpha>2}$, $\boldsymbol{\tau>3}$, $\mu\ge 0$ arbitrary, and the distribution of $L$ is arbitrary satisfying $\mathbb P(L>0)=1$.  Then there are choices for $A_1$ and $u$ in Definition~\ref{def:good_box} for which a.s.\ there exists $k_1\ge k_0$ such that $Q_k$ is $1$-good for all $k\ge k_1$.
	    \item\label{item:2}
	    $\boldsymbol{\alpha>2}$,  $\boldsymbol{\tau\in(2,3]}$, $\boldsymbol{\mu>\mu_{\log}}$, and $L$ satisfies \eqref{eq:F_L-condition} in Assumption \ref{assu:L}. Then for any $\delta>0$ there are choices for $A_1$ and $u$ in Definition~\ref{def:good_box} for which a.s.\ there exists $k_1\ge k_0$ such that $Q_k$ is $(\eta_0-\delta)$-good for all $k\ge k_1$.
	    \item\label{item:3} $\boldsymbol{\alpha>2}$ , $\boldsymbol{\tau\in(2,3]}$, $\boldsymbol{\mu>\mu_{\mathrm{pol}}}$, and $L$ satisfies \eqref{eq:F_L-condition} in Assumption \ref{assu:L}. Then there are choices for $A_1$ and $u$ in Definition~\ref{def:good_box} for which a.s.\ there exists $k_1\ge k_0$ such that $Q_k$ is $1$-good for all $k\ge k_1$.
	\end{enumerate}
	\end{proposition}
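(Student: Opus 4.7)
The plan is to set $q_k := \mathbb{P}(Q_k \text{ is not } \eta\text{-good})$ with $\eta$ being the target exponent in each case (namely $\eta = 1$ in Cases~\ref{item:1} and~\ref{item:3}, and $\eta = \eta_0 - \delta$ in Case~\ref{item:2}), prove that $\sum_k q_k < \infty$, and conclude by Borel--Cantelli that almost surely only finitely many of the events $\{Q_k \text{ is not } \eta\text{-good}\}$ occur. Unpacking Definition~\ref{def:good_box} and applying a union bound over the $3^d$ translates $Q'$, one has
\begin{equation}\label{eq:plan-recursion}
q_k \le 3^d(E_k + P_k),
\end{equation}
where $E_k$ is the probability that some edge internal to a fixed translate $Q'$ has length at least $A_{k-1}/100$ but cost less than $uA_k^{\eta}$, and $P_k$ is the probability that more than $3^d$ children of a fixed translate $Q'$ fail to be $\eta$-good.

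For $E_k$, I would apply Lemma~\ref{lem:no_long_cheap_edge} with $A = A_k$, $N \asymp A_{k-1}$, and $a$ chosen so that $N^{ad} \asymp uA_k^{\eta}$ (so $ad \to \eta$ as $k \to \infty$), then convert expectation to probability via Markov. In Case~\ref{item:1}, since $\alpha>2$ and $\tau>3$, one can simply apply the first clause of the lemma without the cost restriction and observe that $A_k^d(N^{-d(\alpha-1)}+N^{-d(\tau-2)})$ decays polynomially in $A_k$. In Cases~\ref{item:2} and~\ref{item:3} one uses the $a<\mu$ clause, and the definitions $\mu_{\log}=\frac{3-\tau}{\beta}$, $\mu_{\mathrm{pol},\alpha}=\frac{\alpha-(\tau-1)}{d(\alpha-2)}$ and the formula \eqref{eq:eta_0} for $\eta_0$ are precisely what make each of the three exponents $d(\alpha-1)$, $d(\alpha-1-\frac{a}{\mu}(\alpha-\tau+1))$, $d(\tau-2+(\mu-a)\beta)$ strictly exceed $d$ when $a = \eta/d$ (with the small slack $\delta$ providing the strictness in Case~\ref{item:2}). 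Since $A_k = A_1(k!)^2$ grows super-exponentially, any polynomial decay in $A_k$ translates into super-exponential decay of $E_k$ in $k$.

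For $P_k$ I would exploit an approximate independence structure: the goodness of a child $(k-1)$-block $C$ depends only on vertices lying in the union of its $3^d$ translates, a region of diameter $A_{k-1}(1+o(1))$. Colouring the $k^{2d}$ children with $3^d$ colours according to their coordinates mod~$3$ produces colour classes in which children are pairwise separated by at least $2A_{k-1}$, so (by i.i.d.\ weights and conditional independence of edges in SFP/IGIRG) goodness events within a colour class are independent. If more than $3^d$ children are bad, then by the pigeonhole principle some colour class contains at least two bad children, and a union bound within that class gives $P_k \le 3^d\binom{k^{2d}}{2}q_{k-1}^2 = O(k^{4d})\,q_{k-1}^2$. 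Combining this with \eqref{eq:plan-recursion} yields the recursion $q_k \le C(E_k + k^{4d}q_{k-1}^2)$, which can be closed by induction: if $q_{k-1}$ is sufficiently small (say at most $\exp(-ck)$) then the squaring in $q_{k-1}^2$ defeats the polynomial blow-up $k^{4d}$ and $q_k$ inherits the same decay. The base case at level $k_1 \ge k_0$ is secured by choosing $u$ small (using $\mathbb{P}(L>0)=1$ in Case~\ref{item:1}, or the bound $F_L(t)\le c_2 t^\beta$ from Assumption~\ref{assu:L} in Cases~\ref{item:2},~\ref{item:3}) and $A_1$ large enough that Lemma~\ref{lem:no_long_cheap_edge} applies with a suitably small $\varepsilon$.

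The main obstacle I anticipate is the calibration in Case~\ref{item:2}: one must verify that with $\eta = \eta_0 - \delta$ and $a \to \eta/d$, all three terms of Lemma~\ref{lem:no_long_cheap_edge} contribute decay exponents strictly greater than $d$, so that the prefactor $A_k^d$ is overcome. This is exactly where the two-sided definition $\eta_0 = \min\{d(\mu-\mu_{\log}), \mu/\mu_{\mathrm{pol},\alpha}\}$ is needed: the first constraint corresponds to the third term (via $(\mu-a)\beta \ge \mu_{\log}\beta = 3-\tau$, so $\tau-2+(\mu-a)\beta \ge 1$), the second constraint corresponds to the second term (via $\frac{a}{\mu}(\alpha-\tau+1) \le \alpha-2$, so $\alpha-1-\frac{a}{\mu}(\alpha-\tau+1) \ge 1$), and the hypothesis $\alpha>2$ handles the first term. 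A secondary technical point is making the independence argument for $P_k$ fully rigorous, in particular handling vertices lying in the overlap of two enlarged children; however, since the overlap has diameter $O(A_{k-2}) = o(A_{k-1})$ this only costs a bounded geometric factor that is absorbed into the constant $C$.
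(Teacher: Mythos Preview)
Your proposal is correct and follows essentially the same approach as the paper: derive a recursion $q_k \le 3^d(E_k + k^{4d}q_{k-1}^2)$ with $E_k$ bounded via Lemma~\ref{lem:no_long_cheap_edge} (using $a \approx \eta/d$, which the paper makes precise as $a = (\eta_0 - \delta/2)/d$ in Case~\ref{item:2}) and the squaring coming from a pigeonhole/independence argument among well-separated bad children, then close by induction to $q_k \le e^{-k}$ and apply Borel--Cantelli. Your final concern about overlapping enlarged children is a non-issue: once two children are non-neighbouring (equivalently, in the same mod-$3$ class), their full recursive supports---each extended by at most $\sum_{j\le k-2} A_j/2 < A_{k-1}/2$---are genuinely disjoint, so independence is exact and no extra geometric factor is incurred.
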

\begin{remark}\label{rem:large_box_good}
    Proposition \ref{prop:large_box_good} implicitly limits the type of paths present between two vertices: a path either uses short edges (of which it needs to use many if the endpoints are far away) or it uses long edges, which do have high cost. We will see that in Case \ref{item:1} paths that have long edges are simply not present. In Case \ref{item:3} they are present, but the cost of long edges is so high that the paths are not more efficient than paths which only use short edges (corresponding to case $\eta_0=1$ of linear cost-distances). Case \ref{item:2} is most exotic: long edges are present, but their cost is on a polynomial scale compared to their Euclidean length, and their precise exponent will give polynomial (but nonlinear) cost-distances.   
    
	The proof we will give follows closely the one in~\cite[Lemma 14]{deprez2015inhomogeneous}. We show that
    \begin{align}\label{eq:tail_bound_k1}
        \pr(Q_k \textnormal{ is not } (\eta_0-\delta)\textnormal{-good}) \le e^{-k}
    \end{align}
  for $k\ge k_0$, where $\delta$ is an arbitrary positive constant for Case \ref{item:2} and $\delta=0$ for Cases \ref{item:1} and \ref{item:3}. (The same inequality holds true under both Palm measures $\pr^{0}, \pr^{0,x}$ as well). Thus 
    \begin{align*}
	    \pr(\forall k\ge k': Q_{k} \textnormal{ is } (\eta_0-\delta)\textnormal{-good})) \ge 1- \sum_{k\ge k'} e^{-k} \ge 1- 2e^{-k'}.
	\end{align*}
     The existence of $k_1$ (so that $Q_k$ is $(\eta_0-\delta)$-good for all $k\ge k_1$) then follows from the Borel-Cantelli Lemma, and for a fixed $q>0$ we can achieve $\pr(k_1 \le k') \ge 1-q$ for ${k'} = |\log(q/2)|$. We will show in the later parts of the section that the lower bounds on cost-distances follow \emph{deterministically} from Proposition \ref{prop:large_box_good}, for all vertices in distance at least $r_0:=c\cdot A_{k_1}$ for a constant $c$, and thus for all vertices in distance $r\ge r_0=c\cdot A_{|\log(q/2)|} = cA_1\cdot (|\log(q/2)|!)^2 \ge e^{c'\cdot \log(1/q) \cdot \log \log(1/q)}$ for a constant $c'>0$.  However, we did not try to optimise these bounds.
    
Compared to~\cite{deprez2015inhomogeneous} developed for graph distances, in order to show~\eqref{eq:tail_bound_k1} we additionally need to derive an upper bound on the probability that property \emph{(i)} of Definition~\ref{def:good_box} fails, i.e., to handle the cost of edges as well, not just their length. We do this using Lemma~\ref{lem:no_long_cheap_edge}.
\end{remark}

\begin{proof}[Proof of Proposition \ref{prop:large_box_good}]
 In order to unify notation, we define $\eta_0 := 1$ and $\delta := 0$ in Cases \ref{item:1} and \ref{item:3}, which is also consistent with the definition of $\eta_0$ in~\eqref{eq:eta_0}. So we need to show $(\eta_0-\delta)$-goodness in all three cases. We fix the values of $A_1$ and $u$ from Definition~\ref{def:good_box}. They will both depend on the set $\mpar$ of model parameters and on $\delta$ (in Case \ref{item:2}). We will first choose $A_1$ to be suitably large, and then choose $u$ to be suitably small as a function of $A_1$. We will prove the result using the Borel-Cantelli Lemma, by showing that $\sum_{k\ge k_0} \pr\left(Q_k \textnormal{ is not } (\eta_0-\delta)\textnormal{-good}\right)$ is a finite sum. 
By writing $\pr^{\max}(\cdot):=\max\{\pr(\cdot), \pr^{0}(\cdot), \pr^{0,x}(\cdot)\}$, let 
 \begin{equation}\label{eq:psi-k-def}
 \psi_k:=\pr^{\max}(Q_k \textnormal{ is not } (\eta_0-\delta)\textnormal{-good}).
 \end{equation}
 This choice ensures that $k$-blocks appearing in larger $k'$-blocks not centred at $0$ are also bad with probability at most $\psi_k$.
 We will show inductively that $\psi_k \le e^{-k}$. Note that our choice of $k_0=16\cdot 30^d$ in Definition~\ref{def:good_box} ensures that
	\begin{align}\label{eq:k_0_cond}
	    3^dk^{4d}e^{-2(k-1)} \le \tfrac{1}{2}e^{-k} \qquad\text{ for all $k\geq k_0$.}
	\end{align}
	
	\medskip\noindent\textbf{Base case.}
	We start by bounding $\psi_{k_0}$. Let $\E^{0}, \E^{0,x}$ denote expectation under the Palm measures $\pr^0, \pr^{0,x}$, respectively, and denote by $\E^{\max}(\cdot)$ the maximum of $\E[\cdot], \E^0[\cdot], \E^{0,x}[\cdot]$. 
 Let $|\calE(Q_{k_0})|$ be the number of edges inside  $Q_{k_0}$. Then $|\calE(Q_{k_0})|$ has finite expectation because the number of vertices in $Q_k$ is fixed (in SFP) or has a finite second moment (in IGIRG), and the number of edges with both endpoints within $Q_k$ is at most the square of the number of the vertices. The same is true under both Palm measures $\pr^{0}, \pr^{0,x}$.
 Let 
\begin{equation}
    T:=\E^{\max}[|\calE(Q_{k_0})|].
\end{equation}
We always assume $\mathbb P(L>0)=1$ (Assumption~\ref{assu:L} is a stronger assumption), so we can choose the constant $u\in (0,1)$ from Definition~\ref{def:good_box} small enough so that
\begin{equation}\label{eq:u_cond}
    F_L(u)\le e^{-2k_0}/(4T).
\end{equation}
Since the vertex-weights are always at least $1$ and $\mu\ge 0$, for each edge it holds that $\mathcal C(e) \ge L_e$. Hence by Markov's inequality and a union bound,
\begin{align*}
    \psi_{k_0} &= \pr^{\max}(\exists e \in Q_{k_0} \textnormal{ with } \cost{e} < u)
    \le \pr^{\max}(\exists e \in Q_{k_0} \textnormal{ with } L_e\le u) \\
    &\le\pr^{\max}(|\calE(Q_{k_0})|>2e^{k_0}T) + \pr^{\max}(\exists e \in Q_{k_0} \textnormal{ with } L_e\le u \mid |\calE(Q_{k_0})|\le 2e^{k_0}T) \\
    &\le \dfrac{1}{2}e^{-k_0} + 2e^{k_0}\E^{\max}[|\calE(Q_{k_0})|]\cdot \dfrac{e^{-2k_0}}{4T} = e^{-k_0}.
\end{align*}

\medskip\noindent\textbf{Bounding the failure probability of property \emph{(i)}.} 
For the induction step, we need to derive an upper bound for the probability that property \emph{(i)} in the definition of $(\eta_0-\delta)$-goodness fails.
 Since we work under the conditioning that $0\in \calV$, (and later possibly under the presence of an extra vertex at $x\in \R^d$), not all blocks have the same distribution under these measures. Yet, since Lemma \ref{lem:no_long_cheap_edge} is valid with or without the presence of these vertices, the same upper bound holds for all blocks when counting the expected number of edges that are long but too cheap. 
 Using Markov's inequality and translation invariance, for any fixed $j\in\{ -1, 0, 1 \}^d$ we have
	\begin{align*}
		&\pr^{\max}(Q_k+jA_{k-1}/2 \textnormal{ fails to have property \emph{(i)}}) \\
		&\le \E^{\max}[|\{vw\subset Q_k \textnormal{ edge} : |v-w|\ge A_{k-1}/100, \cost{vw}\le uA_k^{\eta_0-\delta}\}|] =: E(k).
	\end{align*}
	We will bound $E(k)$ from above using Lemma~\ref{lem:no_long_cheap_edge}. We will specify $\eps,a$ in Lemma \ref{lem:no_long_cheap_edge} later, only depending on \mpar. 
	Then we will set $A := A_k$, $N:=A_{k-1}/100<A$, and choose $A_1$ large enough (with respect to $\eps, a$) so that Lemma \ref{lem:no_long_cheap_edge} can be applied. When we are able to set $a\ge \mu$, with these choices of $A, N$ then we obtain, for some constant $C>0$ that depends only on \mpar,
	\begin{equation}\label{eq:general_E(k)-2}
	\begin{aligned}
	    E(k) &\le A_k^d\cdot\frac{A_{k-1}^{\eps}}{100^{\eps}}\left(\frac{A_{k-1}^{-d(\alpha-1)}}{100^{-d(\alpha-1)}}  + \frac{A_{k-1}^{-d(\tau-2)}}{100^{-d(\tau-2)}} \right) \\
	    &\le 100^{C}k^{2d}\left(A_{k-1}^{-d(\alpha-2)+\eps}  + A_{k-1}^{-d(\tau-3)+\eps}\right),
	   \end{aligned}
	\end{equation}
	where we substituted $A_k = k^2 A_{k-1}$ for the second inequality. Recall that this bound holds for any $L$ and does not require Assumption~\ref{assu:L}. Observe that the second term only tends to zero when $\tau>3$, so we can set $a\ge \mu$ only when $\tau>3$. For $\tau\le 3$, we will have to set  $a<\mu$, again $A := A_k$, $N:=A_{k-1}/100<A$. By possibly increasing the constant $C>0$, if $A_1$ is sufficiently large so that Lemma~\ref{lem:no_long_cheap_edge} can be applied then~\eqref{eq:no_long_cheap_edge} yields
	\begin{equation}\label{eq:general_E(k)}
	\begin{aligned}
	    E(k) &\le A_k^d\cdot\frac{A_{k-1}^{\eps}}{100^{\eps}}\left(\frac{A_{k-1}^{-d(\alpha-1)}}{100^{-d(\alpha-1)}} + \frac{A_{k-1}^{-d(\alpha-1-\frac{a}{\mu}(\alpha-(\tau-1)))}}{100^{-d(\alpha-1-\frac{a}{\mu}(\alpha-\tau+1))}} + \frac{A_{k-1}^{-d(\tau-2+(\mu-a)\beta)}}{100^{-d(\tau-2+(\mu-a)\beta)}} \right) \\
	    &\le 100^{C}k^{2d}\left(A_{k-1}^{-d(\alpha-2)+\eps} + A_{k-1}^{-d(\alpha-2-\frac{a}{\mu}(\alpha-(\tau-1)))+\eps} + A_{k-1}^{-d(\tau-3+(\mu-a)\beta)+\eps}\right),
	   \end{aligned}
	\end{equation}
	Note that this case of Lemma~\ref{lem:no_long_cheap_edge} required Assumption~\ref{assu:L} on $L$.
	
	We now split into cases depending on the values of $\tau$ and $\mu$ to specify $a$ and $\eps$. Proposition \ref{prop:large_box_good} always assumes $\alpha>2$ but we emphasise this for readability.
	
	\medskip\noindent\textbf{Case 1: $\boldsymbol{\tau >3, \alpha>2}$,  and  $\boldsymbol{\mu\ge 0}$, $L>0$ a.s., otherwise arbitrary.}
	In this case we have $\eta_0=1$ and $\delta=0$. Let us choose $a>\max\{\mu,1/d\}$. Then since $ad>1=\eta_0$ we can choose $A_1$ so large that
	\begin{align}\label{eq:A_1_cond_3}
	    \left(\frac{A_{k-1}}{100}\right)^{ad} \ge A_{k-1}k^2 = A_{k} \quad \textnormal{for all } k > k_0.
	\end{align}
	Since $\tau>3, \alpha>2$, we will set in \eqref{eq:general_E(k)-2}
	\begin{align}\label{eq:eps-1-def}
		\eps:=\frac{d}{2}\min\left\{\alpha-2, \tau-3 \right\}>0.
	\end{align}
	 Substituting this $\eps$ into the upper bound~\eqref{eq:general_E(k)-2} yields
	\begin{align*}
	    E(k)&\le  2\cdot100^{C}k^{2d}A_{k-1}^{-\eps} \le 100^{C+1} A_1^{-\eps} k^{2d} ((k-1)!)^{-2\eps}.
	\end{align*}	
	
		\medskip\noindent\textbf{Case 2: $\boldsymbol{\tau \in (2,3], \alpha>2}$, and $\boldsymbol{\mu \in (\mu_{\log}, \mu_{\mathrm{pol}}]}$.} In this case, $\delta >0$ and Assumption \ref{assu:L} needs to hold for $L$.
	Define
	\begin{align}\label{eq:local-a-case2}
		a:=\min\left\{\mu-\frac{3-\tau}{\beta}, \frac{\mu(\alpha-2)}{\alpha-(\tau-1)}\right\}-\frac{\delta}{2d} = \frac{\eta_0}{d} -\frac{\delta}{2d}, 
	\end{align}
	where the last equation can be seen by using $\eta_0$ from \eqref{eq:eta_0} and  $\mu_{\log} = (3-\tau)/\beta$ from  \eqref{eq:mu_pol_log}. Also note that $\eta_0>0$ since $\mu >\mu_{\log} = (3-\tau)/\beta$, and $\alpha > 2\ge (\tau-1)$, and that the statement of Proposition~\ref{prop:large_box_good} is stronger for smaller $\delta$. Therefore, we can assume that $a>0$. Note also that the first term of the minimum is at most $\mu$ and $\delta>0$, which implies $a<\mu$. (In fact, the second term of the minimum is also at most $\mu$.)
	Moreover, by the definition of $a$ and the fact that $\alpha>2$ implies $\alpha-(\tau-1)>0$, rearranging \eqref{eq:local-a-case2} yields that
	\begin{align}\label{eq:large-box-good-consts}
	    3-\tau-(\mu-a)\beta\le-\frac{\beta\delta}{2d}, \qquad 2-\alpha+\frac{a}{\mu}(\alpha-(\tau-1))\le -\frac{(\alpha-(\tau-1))\delta}{2\mu d}.
	\end{align}
	We choose $A_1$ large enough so that
	\begin{align}\label{eq:A_1_cond_2}
	    \left(\frac{A_{k-1}}{100}\right)^{\eta_0-\delta/2} = \left(\frac{A_k}{100k^2}\right)^{\eta_0-\delta/2}\ge A_k^{\eta_0-\delta} > uA_k^{\eta_0-\delta} \quad \textnormal{for all } k >k_0.
	\end{align}
	By definition of $a$, cf.\ equation~\eqref{eq:local-a-case2}, $(A_{k-1}/100)^{ad}=(A_{k-1}/100)^{\eta_0-\delta/2} > uA_k^{\eta_0-\delta}$ as desired. Using $\alpha-(\tau-1) >0$ the first term in the upper bound~\eqref{eq:general_E(k)} is dominated by the second term, and \eqref{eq:general_E(k)}  becomes
		\begin{align*}
		E(k)&\le 2\cdot100^{C}k^{2d}\left(
		A_{k-1}^{-d(\alpha-2-\frac{a}{\mu}(\alpha-\tau+1))+\eps} + A_{k-1}^{-d(\tau-3+(\mu-a)\beta)+\eps}\right).
	\end{align*}
	Now we set 
	\begin{equation}\label{eq:eps-2-def}
	    \eps:=\frac{\delta}{4}\min\left\{\beta, \frac{\alpha-(\tau-1)}{\mu}\right\}>0.
	\end{equation} 
	Using this $\eps$, combined with \eqref{eq:large-box-good-consts}, it follows that
	\begin{align*}
		E(k)&\le  100^{C+1}k^{2d}A_{k-1}^{-\eps} = 100^{C+1} A_1^{-\eps} k^{2d} ((k-1)!)^{-2\eps}.
	\end{align*}

	\medskip\noindent\textbf{Case 3: $\boldsymbol{\tau \in (2,3]}$, $\boldsymbol{\alpha>2}$, and $\boldsymbol{\mu >\mu_{\mathrm{pol}}}$.} Here again we need Assumption \ref{assu:L} to hold for $L$, and we have $\eta_0=1$ and $\delta = 0$. Remembering the definition of $\mu_{\mathrm{pol}}$ in~\eqref{eq:mu_pol_log}, rearranging $\mu>\mu_{\mathrm{pol}}$ yields that both  $d(\alpha-2)-(\alpha-\tau+1)/\mu>0$ and $d(\tau-3+(\mu-1/d)\beta)>0$, hold. So we will show that we can set in \eqref{eq:general_E(k)} the following values of $\eps$ and $a$:
	\begin{align}\label{eq:eps-3-def}
		\eps:=\frac{1}{4}\min\left\{d(\alpha-2)-\frac{\alpha-(\tau-1)}{\mu}, d\left(\tau-3+(\mu-1/d)\beta\right)\right\}>0,
	\end{align}
    and
	\begin{align}\label{eq:local-a-case3}
	    a := \frac{1}{2}\Big(\frac1d + \min\left\{\frac{\mu(\alpha-2)}{\alpha-(\tau-1)}, \mu-\frac{3-\tau}{\beta} \right\}\Big) >0.
	\end{align}
	We establish first that indeed $a<\mu$ and also that $ad>1$. 
	To see the first, observe that $a$ averages $1/d$ with the minimum of two different quantities.
	By \eqref{eq:mu_pol_log} we have $1/d\le\mu_{\mathrm{pol}} <\mu$. Since $\tau\in(2,3]$, $(\alpha-2)/(\alpha-(\tau-1))\le1$ and so $\mu(\alpha-2)/(\alpha-\tau+1)\le\mu$, and $\mu-(3-\tau)/\beta\le\mu$. This implies in particular that $a$ averages one quantity strictly smaller than $\mu$ with another quantity smaller or equal to $\mu$, showing that $a<\mu$. Since $\mu>\mu_{\mathrm{pol}}$, rearranging also yields
	\begin{align*}
	\frac{\mu(\alpha-2)}{\alpha-(\tau-1)} > \frac{1}{d}
	\quad \textnormal{and} \quad
	\mu-\frac{3-\tau}{\beta} > \frac{1}{d},
	\end{align*}
	so $a>1/d$. This implies $ad>1=\eta_0$, and thus, we can choose $A_1$ so large that
	\begin{align}\label{eq:A_1_cond_1}
	    \left(\frac{A_{k-1}}{100}\right)^{ad} \ge A_k = A_k^{\eta_0-\delta} > uA_k^{\eta_0-\delta} \quad \textnormal{for all } k > k_0.
	\end{align}
	Since $\alpha>2$ and $\tau\le 3$, we have $\alpha-(\tau-1)>0$ and thus, as in Case 2 the upper bound~\eqref{eq:general_E(k)} becomes
	\begin{align}\label{eq:Ek_case3}
		E(k)&\le 2\cdot100^{C}k^{2d}\left(
		A_{k-1}^{-d(\alpha-2-\frac{a}{\mu}(\alpha-(\tau-1)))+\eps} + A_{k-1}^{-d(\tau-3+(\mu-a)\beta)+\eps}\right).
	\end{align}
	The definition of $\eps$ in~\eqref{eq:eps-3-def} and $a$ in~\eqref{eq:local-a-case3} implies $2\eps \le \tfrac12d(\tau-3+(\mu-1/d)\beta)$ and $a\le\tfrac12(1/d + \mu -(3-\tau)/\beta)$. From these, it is elementary to check that
	\begin{align*}
	    -d(\tau-3+(\mu-a)\beta) \le -2\eps.
	\end{align*}
	Likewise, from $2\eps \le \tfrac12(d(\alpha-2)-(\alpha-(\tau-1)/\mu))$ and $a\le\tfrac12(1/d + \mu(\alpha-2)/(\alpha-\tau+1))$ we derive
    \begin{align*}
	    -d\left(\alpha-2-\frac{a}{\mu}(\alpha-\tau+1)\right) \le -2\eps,
	   \end{align*}
	so~\eqref{eq:Ek_case3} simplifies to
	\begin{align*}
		E(k) \le 100^{C+1}k^{2d}A_{k-1}^{-\eps} = 100^{C+1}A_1^{-\eps}k^{2d}((k-1)!)^{-2\eps}.
	\end{align*}
We remark that our approach does not give $\delta=0$ in the boundary case $\mu=\mu_{\mathrm{pol}}$ for $\tau \in (2,3)$, which would yield linear distances. The reason is that for $\delta =0$ we can not find a value of $a$ such that $ad > 1$ holds, and which give a negative exponent in the error bound~\eqref{eq:no_long_cheap_edge}. Since Lemma~\ref{lem:no_long_cheap_edge} is essentially tight, this means that there exist long edges whose cost is sublinear in their length.

	\medskip\noindent\textbf{Induction step.}
	Combining the three cases above, we conclude that in all cases we have for an appropriately chosen $\eps>0$ (depending on the values of $\mu$ and $\tau$)
	\begin{align}\label{eq:failing-property1}
		\pr^{\max}(Q_k+jA_{k-1}/2 \textnormal{ fails to have property \emph{(i)}}) \le E(k) \le 100^{C+1} A_1^{-\eps} k^{2d} ((k-1)!)^{-2\eps}.
	\end{align}
 Importantly, both $C$ and $\eps$ depend only on \mpar. We define \begin{equation}\label{eq:Cprime}
	    C'=C'(A_1):=100^{C+1} A_1^{-\eps},
	\end{equation}
	 which is a constant depending on \mpar and $A_1$.
	
	We note that for $k>k_0$, the $k$-block $Q_k$ is not $(\eta_0-\delta)$-good if at least one of its $3^d$ translations $Q_k+jA_{k-1}/2, j\in\{-1, 0, 1\}^d$ fails to have property \emph{(i)} or \emph{(ii)} of Definition~\ref{def:good_box}. The failure probability of property \emph{(i)} is bounded by \eqref{eq:failing-property1}.  Therefore, translation invariance together with a union bound, and recalling property \emph{(ii)}, implies that
	\begin{align*}
		\psi_k \le 3^d\left(C'k^{2d}((k-1)!)^{-2\eps} + \pr(\textnormal{at least } 3^d+1 \textnormal{ children of } Q_k \textnormal{ are not } (\eta_0-\delta)\textnormal{-good})\right).
	\end{align*}
	Observe the following: if a block has at least $3^d+1$ bad child-blocks, one can find at least one pair of non-neighbouring bad child-blocks. Hence the event in the probability above implies that there are at least two children-blocks $Q^{\star}_{k-1}$ and $Q^{\star\star}_{k-1}$ of $Q_k$ that are not $(\eta_0-\delta)$-good and whose centres have distance at least $2A_{k-1}$.  In particular, any vertex in $Q^{\star}_{k-1}$ is at least  $A_{k-1}$ Euclidean distance away from any vertex in $Q^{\star\star}_{k-1}$, which ensures that the events $\{Q^{\star}_{k-1} \textnormal{ is not } (\eta_0-\delta)\textnormal{-good}\}$ and $\{Q^{\star\star}_{k-1} \textnormal{ is not } (\eta_0-\delta)\textnormal{-good}\}$ are independent. Since there are $k^{2d}$ many child-blocks of $Q_k$, there is at most $\binom{k^{2d}}{2}\le k^{4d}$ many ways to choose $Q^{\star}_{k-1}$ and $Q^{\star\star}_{k-1}$, that are independently bad with probability at most $\psi_{k-1}$. So we deduce the bound 
	\begin{align*}
		\psi_k \le  3^d\left(C'k^{2d}((k-1)!)^{-\eps} + k^{4d}\psi_{k-1}^2\right).
	\end{align*}
	Note that $C'\rightarrow0$ as $A_1\rightarrow\infty$, by \eqref{eq:Cprime}. Thus, we can choose $A_1$ large enough (and this choice only depends on \mpar and $\eps$) so that for all $k > k_0$,
	\begin{align}\label{eq:A_1_cond}
		\psi_k \le \tfrac{1}{2}e^{-k} + 3^dk^{4d}\psi_{k-1}^2 .
	\end{align}
	We now prove by induction on $k$ that
	\begin{align}\label{eq:psi_k_bound}
		\psi_k \le e^{-k}
	\end{align}
	for all $k\ge k_0$. Indeed, we have already seen that~\eqref{eq:psi_k_bound} holds for $k=k_0$. For $k>k_0$, assuming that~\eqref{eq:psi_k_bound} holds for $k-1$, we get
	\begin{align*}
		&\psi_k\le \tfrac{1}{2}e^{-k} + 3^dk^{4d}\psi_{k-1}^2 \le \tfrac{1}{2}e^{-k} + 3^dk^{4d}e^{-2(k-1)} \stackrel{\eqref{eq:k_0_cond}}{\le} \tfrac{1}{2}e^{-k} + \tfrac{1}{2}e^{-k} = e^{-k}.
	\end{align*}
    This shows by induction that~\eqref{eq:psi_k_bound} holds for all $k\ge k_0$, which yields
	\begin{align*}
	   \sum_{k\ge k_0} \pr\left(Q_k \textnormal{ is not } (\eta_0-\delta)\textnormal{-good}\right) = \sum_{k\ge k_0}\psi_k \le \sum_{k\ge k_0} e^{-k} < +\infty 
	\end{align*}
	and concludes the proof of Proposition \ref{prop:large_box_good} by the Borel-Cantelli Lemma.
\end{proof}

\subsection{Paths within good blocks}\label{sec:inside-good-blocks}
The next proposition carries out the renormalisation scheme and is an important step in proving Theorem \ref{thm:linear_regime} and Theorem \ref{thm:polynomial_regime}.
\begin{proposition}\label{prop:path_in_good_block}
	Let $\eta\in(0,1]$ and recall $k_0$ from Definition~\ref{def:good_box}. There exists a constant $u^{\star}$ depending only on \mpar and $\eta$ such that for all $k\ge k_0$ the following holds. 
	
	(1) If $Q$ is an $\eta$-good $k$-block and $x,y\in Q$ satisfy $|x-y|> A_k/16$, then every path from $x$ to $y$ within $Q$ has cost at least $u^{\star}|x-y|^{\eta}$ (deterministically).
	
	(2) If the $(k-1)$-block $Q'$ and the $k$-block $Q$ centred at $x$ are both $\eta$-good, and if $y\in Q$ satisfies $|x-y|>A_{k-1}/8$, then every path from $x$ to $y$ within $Q$ has cost at least $u^{\star}|x-y|^{\eta}/  30^{d+2}$ (deterministically).
\end{proposition}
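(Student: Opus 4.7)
The plan is to prove both parts (1) and (2) by simultaneous induction on $k\ge k_0$, choosing $u^\star$ as a constant that depends only on \mpar and $\eta$ and is small enough to absorb geometric factors of $\sqrt{d}$, $16$, $100$, and the combinatorial constants appearing below. For the base case $k=k_0$, any path $\pi$ from $x$ to $y$ uses at least one edge internal to $Q$, which by Definition \ref{def:good_box} has cost at least $u$; since $|x-y|\le\sqrt{d}A_{k_0}$, a single-edge bound suffices once $u^\star\le u/(\sqrt{d}A_{k_0})^\eta$.

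For the inductive step of part (1), fix an $\eta$-good $k$-block $Q$, vertices $x,y\in Q$ with $|x-y|>A_k/16$, and a path $\pi$ from $x$ to $y$ inside $Q$. I split into two cases. In the \emph{long-edge case}, if some edge $e\in\pi$ has length exceeding $A_{k-1}/100$, then property (i) of Definition \ref{def:good_box} (applied to $Q$ itself, i.e.\ $j=0$) gives $\cost{e}\ge uA_k^\eta$, and the single-edge bound dominates $u^\star|x-y|^\eta$ whenever $u^\star\le u\cdot d^{-\eta/2}$, using $|x-y|\le\sqrt{d}A_k$ and $\eta\le 1$.

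In the \emph{short-edge case}, every edge of $\pi$ has Euclidean length at most $A_{k-1}/100$. Here the overcover of $Q$ by the $3^d$ translates $Q+jA_{k-1}/2$ with $j\in\{-1,0,1\}^d$ and their $(k-1)$-block children plays the central role, via two overlap properties: any edge of $\pi$ has both endpoints inside some single child of some translate, and any point of $Q$ that is not too close to $\partial Q$ sits within distance at least $A_{k-1}/4$ of the boundary of some child of some translate. The plan is to traverse $\pi$ from $x$ to $y$ and greedily carve out disjoint subpaths $\pi_1,\dots,\pi_M$, each contained in a distinct $\eta$-good child-block of some translate and each connecting two vertices of that child-block at Euclidean distance at least $A_{k-1}/16$. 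Part (2) of the inductive hypothesis at level $k-1$ is tailored precisely to lower-bound the cost of such anchored sub-segments. Since property (ii) of Definition \ref{def:good_box} forces at most $3^d$ bad children per translate (hence at most $9^d$ in total), and each bad child removes only a bounded number of candidate sub-segments, the Euclidean budget $|x-y|\in[A_k/16,\sqrt{d}A_k]=\Theta(k^2A_{k-1})$ leaves $M=\Omega(k^2)$ good sub-segments. Summing $M$ applications of the induction gives $\cost{\pi}\ge M\cdot u^\star(A_{k-1}/16)^\eta/30^{d+2}\ge u^\star|x-y|^\eta$ once $u^\star$ has absorbed the combinatorial constants, since $\eta\le 1$ and $|x-y|\le\sqrt{d}\,k^2A_{k-1}$.

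Part (2) is handled by the same long-edge/short-edge dichotomy, now exploiting that $Q'$ is a $(k-1)$-block centered at $x$: its $\eta$-goodness secures an anchored sub-segment at the $x$-endpoint even when $x$ happens to lie near a boundary of its child in $Q$. The worst-case position of $x$ inside its child-block at level $k$ is precisely what forces the loss of the factor $30^{d+2}$, which tracks the shrunken interior available for the first sub-segment and the resulting overlap counting. The main obstacle throughout is the bookkeeping in the short-edge case: one must verify that greedy extraction, after discarding the at most $9^d$ bad children and charging at most $O(1)$ sub-segments to each, still returns enough long-displacement sub-segments in good children to account for the full Euclidean displacement $|x-y|$, and that each such sub-segment genuinely satisfies the distance hypothesis of the inductive statement.
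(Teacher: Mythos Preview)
There is a genuine gap in the inductive step for part (1). You propose to apply part~(2) of the inductive hypothesis at level $k-1$ to each extracted sub-segment, but the hypothesis of part~(2) requires that the $(k-2)$-block and $(k-1)$-block \emph{centred at the starting vertex of the sub-segment} be $\eta$-good. There is no reason for this to hold at an arbitrary point along $\pi$; the only goodness information you have is for the child-block \emph{containing} the sub-segment, which is exactly the hypothesis of part~(1), not part~(2).

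Even if you switch to using part~(1) for the sub-segments, your counting argument does not close the induction. You bound $\cost{\pi}\ge M\cdot u^\star(A_{k-1}/16)^\eta/30^{d+2}$ and want this to dominate $u^\star|x-y|^\eta$. But $u^\star$ cancels, so it cannot ``absorb'' anything; you are left needing $M\cdot(A_{k-1}/16)^\eta\ge 30^{d+2}|x-y|^\eta$. For $\eta=1$ and $|x-y|\sim k^2A_{k-1}$, this asks for $M\ge 16\cdot 30^{d+2}\cdot k^2$, while your greedy extraction gives only $M=\Theta(k^2)$ with an implied constant much smaller than $1$. The induction therefore fails in the linear case $\eta=1$, which is precisely the regime of Theorem~\ref{thm:linear_regime}.

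The paper avoids this by two devices. First, it applies part~(1) (not part~(2)) to each sub-segment and uses the sub-additivity $\sum_i a_i^{\eta}\ge(\sum_i a_i)^{\eta}$ together with the triangle inequality, so that the sub-segment decomposition incurs \emph{no} constant-factor loss: one gets $\sum_i\mathcal{D}(\pi_i)^{\eta}\ge\mathcal{D}(\pi)^{\eta}$ directly. Second, the only genuine loss --- the displacement eaten by the at most $9^d$ bad children --- is shown to be at most a $(1-k_0/k^2)$-fraction of $|x-y|$, and this is tracked by a convergent product $\Lambda(k)=\prod_{h=k_0}^{k}(1-k_0/h^2)$ in the inductive statement, with $u^\star:=C_1\prod_{h=k_0}^{\infty}(1-k_0/h^2)>0$. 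Part~(2) is then obtained \emph{from} part~(1) by a separate case analysis at the final step (using the good $(k-1)$-block $Q'$ at $x$ to anchor one sub-segment when $|x-y|$ is only of order $A_{k-1}$), rather than being fed back into the induction for part~(1).
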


\begin{remark}
	Our proof of (1) is an adaptation of the proof of Lemma 2 in~\cite{berger2004lower},  with the difference that the continuous-valued edge-cost make the argument slightly more complicated. The statement of (2) allows us to prove strictly linear cost-distances in the case $\eta = 1$, avoiding Kingman's subadditive ergodic theorem that finishes the proof in \cite{berger2004lower}. Note that since (2) is symmetric, we could also require a condition for the blocks centred at $y$ instead of $x$.
\end{remark}

\begin{proof}[Proof of Proposition \ref{prop:path_in_good_block}]
\newconstant{cst:path_in_good_block}
We will show the following claim by induction: there exists a constant $\useconstant{cst:path_in_good_block}$ (which depends on the same parameters as $u^{\star}$ does) such that for every $k\ge k_0$, if $Q$ is an $\eta$-good $k$-block and $x,y\in Q$ satisfy $|x-y|> A_k/16$, then every path $\pi$ from $x$ to $y$ within $Q$ has cost at least
\begin{align}\label{eq:inductive_lower_bound}
	\mathcal C(\pi)\ge 	\useconstant{cst:path_in_good_block}\Lambda(k)|x-y|^{\eta}, \qquad \textnormal{where } \qquad \Lambda(k):=\prod_{h=k_0}^k\left(1-\frac{k_0}{h^2}\right).
\end{align}
Then taking $u^{\star}=\useconstant{cst:path_in_good_block} \prod_{h=k_0}^{\infty}(1-k_0/h^2)>0$ shows (1). To show (2), we will slightly modify the last step of the induction in (1).
To ease notation, we will assume that $Q=Q_k$, i.e., $Q$ is the $k$-block centred at the origin. 
	
For the base case, consider $x,y\in Q_{k_0}$ that satisfy $|x-y|>A_{k_0}/16$, where $Q_{k_0}$ is a $\eta$-good $k_0$-block. Then any path between $x$ and $y$ contains at least one edge and since $Q_{k_0}$ is an $\eta$-good $k_0$-block, the path has cost at least $u$ by Definition \ref{def:good_box}. It is here that we use the assumption that $\mathbb P(L > 0)=1$, compare it to the base case in the proof of Proposition \ref{prop:large_box_good}, i.e., at and below \eqref{eq:u_cond}.
	
Since $|x-y|\le A_{k_0}\sqrt{d}$, and $\eta\le 1$, we define $\useconstant{cst:path_in_good_block}:=u/(A_{k_0}\sqrt{d})$ so that~\eqref{eq:inductive_lower_bound} holds for $k=k_0$. Note that $\useconstant{cst:path_in_good_block}\le u/\sqrt{d}$.
	
Now assume that \eqref{eq:inductive_lower_bound} holds already until $k-1$ for some $k > k_0$. Let us consider an $\eta$-good $k$-block $Q_k$, and $x,y\in Q_k$ with $|x-y|>A_k/16$. Let $\pi = (v_1, \ldots, v_l)$ be a path from $v_1=x$ to $v_l=y$ within $Q_k$.

\emph{Case A:}	If $\pi$ contains one or more edges of length greater than $A_{k-1}/100$, then since $Q_k$ is $\eta$-good, by property \emph{(i)} of Definition \ref{def:good_box}, the path has cost at least
\begin{align*}
	uA_k^{\eta} \ge \frac{u|x-y|^{\eta}}{\sqrt{d}^{\eta}}  \ge \useconstant{cst:path_in_good_block}|x-y|^{\eta} \ge \useconstant{cst:path_in_good_block}\Lambda(k)|x-y|^{\eta},
\end{align*}
 since $\eta\le 1$, so we are done.	

\emph{Case B:} When every edge of $\pi$ has length at most $A_{k-1}/100$, we will split $\pi$ in subpaths as follows. Remember that for every translation $Q_k+jA_{k-1}/2$ of $Q_k$, $j\in\{-1, 0, 1\}^d$, at most $3^d$ children of $Q_k+jA_{k-1}/2$ are not $\eta$-good (call these bad), thus we have in total at most $9^d$ bad child-blocks in the union of all translations (some might overlap). Denote these bad children by $B_1,B_2, \ldots, B_{p}$ with $p\le9^d$, and let $B:=B_1 \cup\ldots\cup B_{p}$. If $\pi\cap B = \emptyset$, define $\pi_1:=\pi$. Otherwise, we will decompose $\pi$ into  \emph{good segments} $\pi_s$ followed by \emph{bad segments} $\sigma_t$, so that $\pi$ is the concatenation of the good and bad segments:  $\pi=(\pi_1, \sigma_1, \pi_2, \sigma_2, \ldots, \pi_{S-1}, \sigma_T, \pi_{S})$ for some $S, T$, where some of these segments might be empty, and the last vertex of a segment is the first vertex of the next segment.

We now divide the vertices of $\pi=(v_1, \ldots, v_l)$ into the segments. Intuitively, the edge-set of good segments stay fully outside of bad child-blocks.
Let $a_1$ be the smallest index $i\le l$ so that $v_i\in B$ and let $b_1$ be the index of the containing bad child-blocks: $v_{a_1}\in B_{b_1}$ (choose $b_1$ arbitrarily if there is more than one possibility). Then we set $\pi_1=(v_1, \ldots, v_{a_1-1})$ as the first good segment. Let $z_1$ be the largest value $z$ such that $v_z\in B_{b_1}$, then $\sigma_1=(v_{a_1-1}, \ldots, v_{z_1+1})$ is the first bad segment. (Note that there may be vertices on this segment outside $B_{b_1}$.) Inductively, let $a_{s+1}$ be the smallest $a>z_s$ so that $v_a\in B$, let $b_{s+1}$ be so that $v_{a_{s+1}}\in B_{b_{s+1}}$ and let $z_{s+1}$ be the largest $z$ with $v_z\in B_{b_{s+1}}$. The further good segments are then defined as $ \pi_2:=(v_{z_1+1}, \ldots, v_{a_2-1})$ and so on,  up to $\pi_S$, and the bad segments as $\sigma_1:=(v_{a_1-1}, \ldots, v_{z_1+1}), \sigma_2:=(v_{a_2-1}, \ldots, v_{z_2+1})$ and so on up to $\sigma_T$. Observe that the bad segment $\sigma_t$ contains the two edges $v_{a_t-1} v_{a_t}$ and $v_{z_t}v_{z_t+1}$ not fully contained in $B_{b_t}$. So, the good segments contain edges that are \emph{completely outside} the bad set $B$. Note that $S-1\le T\le p \le  9^d$, and $S-1$ may or may not equal $T$ since two bad segments may directly follow each other.
	
For a path $\rho$, denote by $\mathcal{D}(\rho)$ the Euclidean distance between its endpoints. Moreover, for $v,w\in\rho$, let $\rho[v, w]$ be the subpath (segment) from $v$ to $w$ on $\rho$. By the triangle inequality, $|x-y|\le \sum_{s=1}^S\mathcal{D}(\pi_s) + \sum_{t=1}^T\mathcal{D}(\sigma_{t})$. Moreover,  since the diameter of $B_{b_{t}}$ is $\sqrt{d}A_{k-1}$ (since $B_{b_t}$ is a level $k-1$ block), and every edge in $\pi$ has length at most $A_{k-1}/100$ (by assumption of Case B), for every bad segment $\sigma_{t}$ we have
\begin{align}\label{eq:bad-box-span}
	\mathcal{D}(\sigma_{t}) \le |v_{a_{t}}-v_{z_{t}}| + |v_{a_{t}-1}-v_{a_{t}}| + |v_{z_{t}}-v_{z_{t}+1}| \le \sqrt{d}A_{k-1}+2A_{k-1}/100 \le 2dA_{k-1}.
\end{align}
Let 
\begin{align}\label{eq:IS-def}
	I:=\{ s \in \{1,\ldots,S\} \mid \mathcal{D}(\pi_s)>A_{k-1}/2\}
\end{align}
be those good segments where the two endpoints span at least $A_{k-1}/2$ Euclidean distance. Then $\sum_{s\notin I} \mathcal{D}(\pi_s)\le SA_{k-1}/2$, trivially, since there are $S$ good segments. Since the inductive statement assumes $|x-y|>A_k/16$, this and \eqref{eq:bad-box-span} gives a somewhat involved pigeon-hole principle on the triangle inequality above:
\begin{align}
\begin{split} \label{eq:lower-bound-sum-distance}
	\sum_{s\in I} \mathcal{D}(\pi_s) &\ge |x-y| - \sum_{t=1}^T\mathcal{D}(\sigma_{t}) - \sum_{s\notin I} \mathcal{D}(\pi_s) \ge |x-y|-(2dT+S/2)A_{k-1} \\
	&{ \buildrel (\star) \over \ge}\  |x-y|-30^dA_{k-1}
	{ \buildrel (\dagger) \over \ge}\  \left(1-\frac{16\cdot30^d}{k^2}\right)|x-y|\ { \buildrel (\Box) \over  =}\  \left(1-\frac{k_0}{k^2}\right)|x-y|,
\end{split}
\end{align}
where we used that $S-1, T \le 9^d$ to get inequality $(\star)$, that $|x-y|\ge A_k/16$ and that $A_k=k^2 A_{k-1}$ to get $(\dagger)$, and then the definition of $k_0$ in Definition \ref{def:good_box} to get $(\Box)$. For later reference, we note that under the condition $|x-y|\ge 2\cdot 30^d A_{k-1}$ (that we will assume in proving (2)), the following weaker version of~\eqref{eq:lower-bound-sum-distance} still holds:
\begin{align}
\begin{split} \label{eq:lower-bound-sum-distance2}
	\sum_{s\in I} \mathcal{D}(\pi_s) \ge |x-y|-30^dA_{k-1}
	\ge \frac{1}{2}|x-y|.
\end{split}
\end{align}
Returning to the proof of \eqref{eq:inductive_lower_bound}, observe that \eqref{eq:lower-bound-sum-distance} bounds the total Euclidean distance $\mathcal D$ spanned by the endpoints of good segments $\pi_s$. Now we will work towards switching $\mathcal D$ to $\mathcal C$, the cost of the segment.
We claim that if $s\in I$, then
\begin{align}\label{eq:lower_bound_long_non_bad_path}
	\cost{\pi_s} \ge \useconstant{cst:path_in_good_block} \Lambda(k-1)\mathcal{D}(\pi_s)^{\eta}.
\end{align}
In order to prove~\eqref{eq:lower_bound_long_non_bad_path}, we will show inductively that we can partition $\pi_s=(v_{z_s+1}, \ldots,$ $v_{a_{s+1}-1})$ into sub-segments $\pi_{s,i}=\pi_s[x_{s,i}, y_{s,i}]$ (where $y_{s,i}=x_{s,i+1}$, i.e., the end-vertex of a sub-segment is the starting vertex of the next sub-segment) for $i=1,\ldots, q_s$, such that for all $i$:
\begin{enumerate}[(I)] 
    \item $\mathcal{D}(\pi_{s,i})> A_{k-1}/16$,
    \item $\pi_{s,i}\subseteq B_{A_{k-1}/4}(x_{s,i})$,
\end{enumerate}
i.e., the whole segment is contained in the Euclidean ball of radius $A_{k-1}/4$ centred around $x_{s,i}$ and its endpoints span enough Euclidean distance.
We will construct the $\pi_{s,i}$ greedily, with the induction hypothesis that if $\pi_s$ is \emph{not} covered by the first $i$ sub-segments (i.e.\ if $\pi_s\neq\cup_{j=1}^{i}\pi_{s,j}$), then $|y_{s,i}-v_{a_{s+1}-1}|>A_{k-1}/16$ (remembering that $v_{a_{s+1}-1}$ is the last vertex of $\pi_s$).
	
For the base case $i=0$, we take $y_{s,0}$ to be the first vertex of $\pi_s$, and since $s\in I$, by \eqref{eq:IS-def}, we have $|y_{s,0}-v_{a_{s+1}-1}| = \mathcal{D}(\pi_s) > A_{k-1}/2 > A_{k-1}/16$, so the induction hypothesis is satisfied. 
	
Now assume by induction that we already have $\pi_{s,1}, \ldots, \pi_{s,i}$ satisfying (I), (II) and $|y_{s,i}-v_{a_{s+1}-1}|>A_{k-1}/16$ for some $i\ge1$, and let us construct $\pi_{s,i+1}$.
	
\emph{Case B1:} If the segment $\pi_s[y_{s,i},v_{a_{s+1}-1}]$ satisfies (II), that is, if $\pi_s[y_{s,i}, v_{a_{s+1}-1}]\subseteq B_{A_{k-1}/4}(y_{s,i})$ then we define $q_s:=i+1$, $\pi_{s,q_s}:=\pi_s[y_{s,i},v_{a_{s+1}-1}]$ and the procedure terminates. $\pi_{s,q_s}$ also satisfies (I) since $\mathcal{D}(\pi_{s,q_s}) = |y_{s,i}-v_{a_{s+1}-1}|>A_{k-1}/16$ because of our assumption that $q_s>i$.

\emph{Case B2:} If $\pi_s[y_{s,i},v_{a_{s+1}-1}] \nsubseteq B_{A_{k-1}/4}(y_{s,i})$ we distinguish two cases depending on $|y_{s,i}-v_{a_{s+1}-1}|$.
	
\noindent\emph{Case B2a}: $|y_{s,i}-v_{a_{s+1}-1}|\ge 5A_{k-1}/32$. Define $\pi_{s,i+1}$ to be the path obtained by following $\pi_s$ from $x_{s,i+1}:=y_{s,i}$ until reaching the first vertex on $\pi_s$ that spans larger than $A_{k-1}/16$ Euclidean distance from $x_{s,i+1}$, and let this vertex be $y_{s, i+1}$. The vertex $y_{s, i+1}$ exists  since $|x_{s,i+1}-v_{a_{s+1}-1}| = |y_{s,i}-v_{a_{s+1}-1}| > A_{k-1}/16$ was our assumption. Then $\pi_{s,i+1}$ satisfies (I) by our definition of $y_{s, i+1}$. Since every edge of $\pi_s$ has length at most $A_{k-1}/100$ (since we are under Case B), and every vertex of $\pi_{s,i+1}=\pi_s[x_{s,i+1}, y_{s,i+1}]$ except $y_{s,i+1}$ is within distance $A_{k-1}/16$ of $x_{s,i+1}$, $\pi_{s,i+1}$ is also contained in a ball of radius $A_{k-1}/16+A_{k-1}/100<A_{k-1}/4$ centred at $x_{s,i+1}$, and thus (II) is satisfied as well. Finally, by the triangle inequality we have
\begin{align*}
    |y_{s,i+1}-v_{a_{s+1}-1}| &\ge |y_{s,i}-v_{a_{s+1}-1}|-|y_{s,i}-y_{s,i+1}|\\
    & \ge 5A_{k-1}/32-(A_{k-1}/16+A_{k-1}/100) > A_{k-1}/16,
\end{align*}
which shows the induction step for this case.
	
\noindent\emph{Case B2b}: $|y_{s,i}-v_{a_{s+1}-1}|< 5A_{k-1}/32$. Intuitively, this case means that while $y_{s,i}$ is close to $v_{a_{s+1}-1}$ in space, the path $\pi_s[y_{s,i},v_{a_{s+1}-1}]$ wanders far off and then comes back before reaching the last vertex $v_{a_{s+1}-1}$.
Let $y'$ be the first vertex on the path $\pi_s$ starting from $x_{s,i+1}:=y_{s,i}$ such that $|y'-x_{s,i+1}|\ge A_{k-1}/4$ (this vertex must exist since $\pi_s[y_{s,i}, v_{a_{s+1}-1}] \nsubseteq B_{A_{k-1}/4}(y_{s,i})$, since we are under Case B2). Then define $y_{s,i+1}$ to be the vertex right before $y'$ on $\pi_s[x_{s,i+1},y']$ and let $\pi_{s,i+1}:=\pi_s[x_{s,i+1},y_{s,i+1}]$. Again, since every edge of $\pi_s$ has length at most $A_{k-1}/100$ (since we are under Case B), we have $\mathcal{D}(\pi_{s,i+1}) \ge |x_{s,i+1}-y'|-|y_{s,i+1}-y'| \ge A_{k-1}/4-A_{k-1}/100 > A_{k-1}/16$ so (I) is satisfied. Moreover, $\pi_{s,i+1}$ satisfies (II) by our definition of $y_{s,i+1}$. Finally, we use the triangle inequality to get
\begin{align*}
    |y_{s,i+1}-v_{a_{s+1}-1}| &\ge |y'-x_{s,i+1}| - |y_{s,i+1}-y'| - |v_{a_{s+1}-1}-x_{s,i+1}| \\
    &\ge A_{k-1}/4-A_{k-1}/100-5A_{k-1}/32 > A_{k-1}/16,
\end{align*}
which concludes the induction step.

The reason for requiring (II) is that the obtained partition of $\pi_s$ has the property that every sub-segment $\pi_{s,i}$ \emph{is contained in a $\eta$-good $(k-1)$-block} (among the children of $Q_k$ and its translations). This is due to a geometric statement: any ball of radius $A_{k-1}/4$ within $Q_k$ is contained in at least one $k-1$-level block that is a child of some shift $Q_k+\underline j A_{k-1}/2$ for some $\underline j \in \{-1,0,1\}^d$. By (I) and the induction hypothesis~\eqref{eq:inductive_lower_bound}, we have $\cost{\pi_{s,i}} \ge \useconstant{cst:path_in_good_block} \Lambda(k-1)\mathcal{D}(\pi_{s,i})^{\eta}$ and therefore
\begin{align*}
	\cost{\pi_s} &= \sum_{i=1}^{q_s}\cost{\pi_{s,i}} \ge \useconstant{cst:path_in_good_block} \Lambda(k-1)\sum_{i=1}^{q_s} \mathcal{D}(\pi_{s,i})^{\eta} \\
	&{\buildrel (\star) \over \ge}\  \useconstant{cst:path_in_good_block} \Lambda(k-1)\left(\sum_{i=1}^{q_s} \mathcal{D}(\pi_{s,i})\right)^{\eta}
	{\buildrel (\triangle) \over \ge} \useconstant{cst:path_in_good_block} \Lambda(k-1)\mathcal{D}(\pi_s)^{\eta},
\end{align*}
where we got inequality $(\star)$ as follows: since  $\eta\le 1$ the function $x^\eta$ is sublinear, i.e., $x^{\eta} + y^{\eta} \ge (x+y)^{\eta}$, and $(\triangle)$ is the triangle inequality applied on $\pi_s$ and its sub-segments. This shows~\eqref{eq:lower_bound_long_non_bad_path}.
Recall now the set $I$ from \eqref{eq:IS-def}, and the pigeon-hole argument in \eqref{eq:lower-bound-sum-distance}, which holds under the assumption $|x-y|\ge A_k/16$. By the same sublinearity argument $(\star)$ of the function $x^\eta$, we can finally deduce that
\begin{align}
\begin{split}\label{eq:lower-bound-cost-sum}
	\cost{\pi} &\ge \sum_{s\in I}\cost{\pi_s}
	\stackrel{\eqref{eq:lower_bound_long_non_bad_path}}{\ge} \useconstant{cst:path_in_good_block}\Lambda(k-1)\sum_{s\in I} \mathcal{D}(\pi_s)^{\eta}
		\stackrel{(\star)}{\ge}
	 \useconstant{cst:path_in_good_block} \Lambda(k-1)\bigg(\sum_{s\in I} \mathcal{D}(\pi_s)\bigg)^{\eta} \\
	&\stackrel{\eqref{eq:lower-bound-sum-distance}}{\ge} \useconstant{cst:path_in_good_block} \Lambda(k-1) \left(1-\frac{k_0}{k^2}\right)^{\eta} |x-y|^{\eta} \
\stackrel{\eta \le 1}{\ge} \ \useconstant{cst:path_in_good_block} \Lambda(k)|x-y|^{\eta},
\end{split}
\end{align}
where $\Lambda(k)$ is defined in \eqref{eq:inductive_lower_bound}.
This finishes the inductive demonstration of~\eqref{eq:inductive_lower_bound} and concludes the proof of part (1) of the proposition.\smallskip
	
For part (2), we discriminate two sub-cases. If $|x-y|\ge 2\cdot 30^dA_{k-1}$, then~\eqref{eq:lower-bound-sum-distance2} holds, and we can repeat the calculation in~\eqref{eq:lower-bound-cost-sum}, getting  
\begin{align*}
	\cost{\pi} \ge
	 \useconstant{cst:path_in_good_block} \Lambda(k-1)\left(\sum_{s\in I} \mathcal{D}(\pi_s)\right)^{\eta}
	&\stackrel{\eqref{eq:lower-bound-sum-distance2}}{\ge} 2^{-\eta}\useconstant{cst:path_in_good_block} \Lambda(k-1) |x-y|^{\eta},
\end{align*}
which is stronger than required. So consider the remaining case, $A_{k-1}/8 < |x-y|< 2\cdot 30^dA_{k-1}$. As before, if the path $\pi$ from $x$ to $y$ contains an edge of length more than $A_{k-1}/100$, the claim follows from $Q_k$ being $\eta$-good (Case A). So assume otherwise. Then there exists a vertex on $\pi$ with $|x-v| \in (A_{k-1}/16, A_{k-1}/8]$. Let $v$ be the first such vertex on $\pi$ (starting from $x$), and let $\pi'=\pi[x,v]$. Then $v \in Q'$ (the (k-1)-block centred at $x$) and $\pi' \subseteq Q'$. Since $Q'$ is good by prerequisite, we may apply part (1) to the path $\pi'$ and conclude
\begin{align*}
	\cost{\pi} \ge \cost{\pi'} \ge 
	 u^{\star}|x-v|^{\eta} \geq u^{\star}\left(\frac{A_{k-1}}{16}\right)^{\eta} \geq u^{\star}\left(\frac{|x-y|}{32\cdot 30^d}\right)^{\eta} \ \stackrel{\eta \le 1}{\ge} \
	 u^{\star}|x-y|^\eta/30^{d+2},
\end{align*}
as required, finishing the proof of Proposition \ref{prop:path_in_good_block}.
\end{proof}
\setcounter{constant}{0}

\subsection{Proofs of lower bounds}\label{sec:lower_bound_polynomial}

We are now ready to prove Theorem~\ref{thm:linear_polynomial_lower_bound}.

\begin{proof}[Proof of Theorem~\ref{thm:linear_polynomial_lower_bound}]\label{proof:linear_polynomial_lower_bound}
    We will prove both claims~\eqref{eq:lin-poly-lower-bound} and~\eqref{eq:linear-lower} simultaneously by showing that the bound
    \begin{align}\label{eq:lin-poly-lower-bound-1}
        d_{\mathcal{C}}(0,x) \ge \frac{u^{\star}|x|^{\eta}}{30^{d+2}\sqrt{d}}
    \end{align}
    holds simultaneously for all $x$ with sufficiently large $|x|=:r$, where $u^\star$ is the constant from Proposition~\ref{prop:large_box_good}. The only difference between the two statements is that we prove \eqref{eq:lin-poly-lower-bound-1} for all $\eta < \eta_0$ if $\mu \in (\mu_{\log},\mu_{\mathrm{pol}}]$, and for $\eta := \eta_0 = 1$ in the cases $\mu > \mu_{\mathrm{pol}}$ or $\tau>3$. 
    
    By Proposition~\ref{prop:large_box_good}, a.s.\ there is some (random) $k_1\ge k_0$ such that for all $k\ge k_1$, the $k$-block $Q_k$ centred around $0$ is $\eta$-good in the sense of Definition \ref{def:good_box}: every edge of length larger than $A_{k-1}/100$, internal to $Q_k$ and its shifts $Q'=Q_k+\underline j A_{k-1}/2$ by vectors $\underline j\in \{-1,0,1\}^d$, has cost at least $u^\star A_k^\eta$, and there are at most $3^d$ bad child-blocks of $Q_k$ and/or the $Q'$s. This is the only place where we need to discriminate between the cases $\tau\in(2,3)$ and $\mu \in (\mu_{\log},\mu_{\mathrm{pol}}]$, or $\tau\in(2,3)$ and $\mu > \mu_{\mathrm{pol}}$; or $\tau >3$.  Proposition~\ref{prop:large_box_good} allows us to choose $\eta := \eta_0=1$ only in the latter two cases, otherwise we choose $\eta=\eta_0-\delta$ for an arbitrarily small $\delta>0$.
    
    For each vertex $x\in\R^d$, define $k^{\star}(x)$ to be the smallest integer $k\in\N$ with $x \in Q_k$. 
    We will show that~\eqref{eq:lin-poly-lower-bound-1} holds for all $x$ with $k^{\star}(x) \ge k_1 + 1$, which in particular implies the inequality for all $|x| > \sqrt{d}A_{k_1}/2$. So let us fix some $x$ and suppose that $k^{\star}:=k^{\star}(x) \ge k_1+1$. In particular we have $|x|\le \sqrt{d}A_{k^{\star}}/2$. Let $\pi$ be a path from $0$ to $x$, and let $k'$ be the smallest index such that $\pi \subseteq Q_{k'}$. Note that $k' \ge k^\star$ because $x \not\in Q_{k^\star-1}$. Then there exists a vertex $v$ on $\pi$ in $Q_{k'} \setminus Q_{k'-1}$, and in particular $|v| > A_{k'-1}/2$. If $k'=k^\star$ then we pick $v:=x$, otherwise we pick any such vertex $v$. By the second part of Proposition~\ref{prop:path_in_good_block}, we have
\begin{equation}\label{eq:cost-lower-intermed}
        \cost{\pi}\ge u^{\star}|v|^{\eta}/30^{d+2}.
    \end{equation}
    In the case $k'=k^\star$, \eqref{eq:lin-poly-lower-bound-1} follows from $x=v$ and thus $|v|=|x|$. In the case $k'>k^\star$, it follows from $|v| > A_{k'-1}/2 \ge A_{k^\star}/2 \ge |x|/\sqrt{d}$ and $\eta \le 1$, finishing the proof of~\eqref{eq:lin-poly-lower-bound} and \eqref{eq:linear-lower}.
\end{proof}   

Theorem~\ref{thm:linear_polynomial_lower_bound} contains Theorem~\ref{thm:polynomial_regime} and the lower bound in Theorem~\ref{thm:linear_regime} as special cases. Moreover, Corollaries~\ref{cor:ball-growth} and \ref{cor:polynomial-total} follow immediately from Theorem~\ref{thm:polynomial_regime} and from Theorems~\ref{thm:polynomial_regime} and~\ref{thm:polynomial-upper}, respectively. Finally, the lower bound in Theorem~\ref{thm:finite_graph} is a trivial consequence of Theorem~\ref{thm:linear_regime} since GIRG is a subgraph of IGIRG, which can only increase distances. 
We now prove Corollary \ref{cor:linear-dev}, and then 
the limit cases of Theorem~\ref{thm:threshold_regimes} in the next subsection.

\begin{proof}[Proof of Corollary \ref{cor:linear-dev}.]
To prove this corollary we assume the upper bound of Theorem~\ref{thm:linear_regime}, which is proved in Section~\ref{sec:linear-regime}. In other words we assume that there exists a constant $\kappa_2$ such that $\lim_{|x|\to \infty}\pr(\cost{\pi_{0,x}^\star} < \kappa_2 |x|)=1.$
Given this constant $\kappa_2$ we define $\kappa_3:=\kappa_2 30^{d+2}/u^\star$. Let $\pi_{0,x}$ be any path that leaves the ball $B_{\kappa_3|x|}(0)$. We prove now using \eqref{eq:cost-lower-intermed} that $\pi_{0,x}$ cannot be optimal. Indeed, let $v\in \pi_{0,x}$ be any vertex with $|v|\ge \kappa_3|x|$, and apply \eqref{eq:cost-lower-intermed} to $v$, which gives with $\eta=1$ that 
\begin{equation*}
    \calC(\pi_{0,x}) \ge u^\star |v|/30^{d+2} \ge u^\star \kappa_3|x|/30^{d+2} = \kappa_2 |x|.
\end{equation*}
Hence, with probability tending to $1$, any such path $\pi_{0,x}$ cannot be optimal. 
\end{proof}

\subsection{Limit Cases, Lower Bounds}\label{sec:limit-cases-lower}

In this section, we prove the lower bounds in the limit cases, i.e., we prove the lower bounds in Theorem~\ref{thm:threshold_regimes}. We achieve this by coupling a model with $\alpha=\infty$ to a suitable model with $\alpha < \infty$, and likewise for $\beta = \infty$. For $\alpha=\infty$, we will keep the vertex set and the weights identical in the two coupled models, but we use a subset of edges of the $\alpha<\infty$ model to obtain the $\alpha=\infty$ model (with identical costs, where edges are kept). For $\beta =\infty$, we can use the same vertex set and weights, and the same edge set in the two coupled models with $\beta=\infty$ and $\beta<\infty$, but we decrease the edge costs. 

\begin{proof}[Proof of Theorem~\ref{thm:threshold_regimes} (lower bounds)]\label{proof:threshhold_regimes_lower}
\emph{(a)} First consider the case $\alpha = \infty$. For Theorem~\ref{thm:polynomial_regime}, we need to study the case $\mu > \mu_{\log} = (3-\tau)/\beta$ from \eqref{eq:alpha-infty-definitions} and show that for sufficiently large $|x|$,
\begin{equation}\label{eq:limit-poly-rep}
    d_{\mathcal  C}(0,x) \ge|x|^{\eta_{0,\infty}-\varepsilon},
\end{equation}
where, recalling that $\mu_{\mathrm{pol}}=(3-\tau)/\beta + 1/d$ in the $\alpha=\infty$ case (see \eqref{eq:alpha-infty-definitions}),
\begin{align*}
    \eta_{0,\infty} = \begin{cases}
	1 & \mbox{ if $\mu>\mu_{\mathrm{pol}}$,}\\
	d\cdot (\mu-\mu_{\log}) & \mbox{ if $\mu\le\mu_{\mathrm{pol}}$.}
	\end{cases}
\end{align*}
To show \eqref{eq:limit-poly-rep}, we will show that IGIRG (and also GIRG and SFP) are \emph{monotone} in $\alpha$ in the following sense. We first explain it for $\alpha <\infty$. Let $\alpha, \alpha' \in (1,\infty)$ with $\alpha' <\alpha$. Let $\overline c \ge \underline c >0$, and consider a realisation of IGIRG, say $G$, with parameters $\alpha, \overline c, \underline c >0$, and arbitrary other parameters. Set $\overline c' := \underline c' := \overline c$ and consider a second IGIRG $G'$ with parameters $\alpha',\overline c',\underline c'$, and otherwise identical parameters to $G$. Then conditioned on the vertex set $\mathcal V$ and the weight vector $\mathcal W$, by~\eqref{eq:connection_prob}, the edge probabilities of $G$ (of $G'$) are given by a function $h$ (a function $h'$), and $h$ and $h'$ satisfy the relation 
\begin{align*}
    h(x,w_1,w_2) \le \overline c \cdot\min\{1,(w_1w_2)/|x|^d\}^\alpha \le \underline c' \cdot\min\{1,(w_1w_2)/|x|^d\}^{\alpha'} \le h'(x,w_1,w_2).
\end{align*}  Therefore, we can couple $G$ to $G'$ such that $G$ is a subgraph of $G'$. Note that $G'$ has the same model parameters as $G$ except for $\alpha'$, and except for the parameters $\overline c,\underline c$ whose values do not appear in Theorem~\ref{thm:polynomial_regime}, nor in any of the other theorems. For $\alpha=\infty$ and any $\alpha' \in (1,\infty)$, the same coupling is possible, as we show next. In this case, the defining equation~\eqref{eq:alpha_infty} for $h$ includes two parameters $\underline{c}, c' >0$, and requires that $h(x,w_1,w_2) = 0$ for $(w_1w_2)/|x|^d < c'$ and $h(x,w_1,w_2) \geq \underline c$ for $(w_1w_2)/|x|^d \ge 1$. In all other cases, we still have $h(x,w_1,w_2) \leq 1$ because it is a probability. Hence by setting 
\begin{align*}
    h'(x,w_1,w_2) := 
    \begin{cases} 
    \underline c \cdot \min\{1,(w_1w_2)/|x|^d\}^{\alpha'} & \text{ if } \tfrac{w_1w_2}{|x|^d} < c', \\
    1 & \text{ if }\tfrac{w_1w_2}{|x|^d} \ge c',
    \end{cases}
\end{align*} 
we can ensure that $h'(x,w_1,w_2) \ge h(x,w_1,w_2)$ for all $x,w_1,w_2$. Moreover, it can easily be checked that $h'$ satisfies the conditions in~\eqref{eq:connection_prob} for $\alpha'$ with $\overline c' := \max\{1,\underline{c}, (c')^{-\alpha'}\}$ and $\underline c' := \min\{1,\underline c\}$, i.e., we obtain an IGIRG model with parameter $\alpha'$.

To resume the proof of Theorem~\ref{thm:threshold_regimes}, consider IGIRG $G$ for $\alpha = \infty$, and for some $\beta,\mu,\tau$ such that $\mu > \mu_{\mathrm{log}}$, and let $\eps >0$. We claim that there exists $\alpha' <\infty$ such that $\eta_0' := \eta_0(\alpha',\beta,\mu,\tau) > \eta_{0,\infty}-\tfrac\eps2$. Indeed, this follows because we have defined $\eta_{0,\infty}$ such that $\lim_{\alpha'\to\infty} \eta_0(\alpha',\beta,\mu,\tau) = \eta_{0,\infty}$, as can be seen by comparing $\eta_0$ in \eqref{eq:eta_0} (see also\eqref{eq:mu_pol_log}) to $\eta_{0, \infty}$ in \eqref{eq:alpha-infty-definitions}. Now we couple $G$ with an IGIRG $G'$ with parameter $\alpha'$ (and different $\underline c, \overline c$), but identical parameters $\beta,\mu,\tau$. Theorem~\ref{thm:polynomial_regime} applies to $G'$, so we obtain for sufficiently large $|x|$,
\begin{equation}\label{eq:limit-poly-rep2}
     d_{\mathcal  C}^{G'}(0,x) \ge|x|^{\eta_{0,\infty}-\varepsilon}.
\end{equation}
Since $G$ is a subgraph of $G'$, cost-distances in $G$ are larger, so~\eqref{eq:limit-poly-rep2} remains true if we replace $d_{\mathcal  C}^{G'}$ by $d_{\mathcal  C}^{G}$. 

Likewise, for the linear lower bound in Theorem~\ref{thm:linear_regime} with $\alpha=\infty$, for any $\mu > \mu_{\mathrm{pol}}$ (i.e, $\eta_{0,\infty} =1$) we can find $\alpha'$ such that $\eta_0(\alpha',\beta,\mu,\tau) =1$ with the same parameters $\beta,\mu,\tau$. Hence, the same coupling also shows that for sufficiently large $|x|$,
\begin{align*}
		d_{\mathcal{C}}(0,x) > \kappa_1 |x|.
\end{align*}
This implies the lower bound in Theorem~\ref{thm:linear_regime} for $\alpha = \infty$.

Finally, the same argument also holds for GIRG, since for fixed $r>0$, two randomly chosen vertices $u_n,v_n$ in a box of volume $n$ will a.a.s. satisfy $|u_n - v_n| \ge r$ as $n\to \infty$. Thus the lower bound in Theorem~\ref{thm:finite_graph} is also implied. This concludes the case $\alpha = \infty$.
\smallskip

\emph{(b)}\&\emph{(c)} 
Now we come to the case $\beta = \infty$. Note that we have shown part \emph{(a)} already, so in particular the couplings that we construct now are also valid when $\alpha=\beta=\infty$. The idea is the same as for $\alpha=\infty$, but the coupling is much easier. Since the parameter $\beta$ does not influence the graph structure, we can use the same graph, but with different transmission costs. More precisely, consider IGIRG $G$ with $\beta=\infty$, i.e., the cumulative distribution $F_L$ of the cost variables satisfies $\lim_{t\to 0} F_L(t)/t^{\beta} = 0 \mbox{ for all }0<\beta <\infty$. For any $\beta'\in(0,\infty)$, there exists $t_0\in (0,1)$ such that the distribution $F_L'$ given by $F_L'(t) = t^{\beta'}$ for $t<t_0$ and $F_L'(t) = 1$ for $t\geq t_0$  dominates $F_L$. The distribution $F_L'$ satisfies condition~\eqref{eq:F_L-condition}, so it yields an IGIRG model. Hence, we can couple $G$ to an IGIRG $G'$ with any parameter $\beta' \in (0,\infty)$ (and otherwise identical parameters except for the parameters $t_0,c_1,c_2$ that appear in~\eqref{eq:F_L-condition}) such that $G$ and $G'$ are the same graph, and for every edge the transmission cost in $G$ is at least as large as in~$G'$.

We can now turn to proving the lower bound in Theorem~\ref{thm:polynomial_regime} for $\beta =\infty$, so consider IGIRG $G$ for $\beta =\infty$, and let $\mu >0 = \mu_{\mathrm{log}}$. According to~\eqref{eq:beta-infty-definitions}, in this case we set $\eta_{0,\infty} := 1$ for $\mu > \mu_{\mathrm{pol}}$ and $\eta_{0,\infty} := \min\{d\mu,\mu/\mu_{\mathrm{pol},\alpha}\}$ otherwise. Since $\eta_{0,\infty} = \lim_{\beta'\to\infty} \eta_0(\alpha,\beta',\mu,\tau)$, we can find $\beta' \in (0,\infty)$ such that $\eta_0' := \eta_0(\alpha,\beta',\mu,\tau) \geq \eta_{0,\infty} -\tfrac\eps2$, and couple $G$ to an IGIRG $G'$ with parameter $\beta'$, as described above. Moreover, by choosing $\beta'$ large enough, we can also ensure that $ \mu > \mu_{\mathrm{log}}(\beta',\tau,\mu) = (3-\tau)/\beta'$. Then we can apply Theorem~\ref{thm:polynomial_regime} to $G'$, and obtain that for sufficiently large $|x|$,
\begin{equation*}
    d_{\mathcal  C}^{G'}(0,x) \ge|x|^{\eta_0'-\eps/2}.
\end{equation*}
Since cost-distances in $G'$ are less or equal than cost-distances in $G$, the statement remains true if we replace $d_{\mathcal  C}^{G'}$ by $d_{\mathcal  C}^{G}$, and since $\eta_0'-\tfrac\eps2 \geq \eta_{0,\infty}-\eps$, we may replace $\eta_0'-\tfrac\eps2$ by $\eta_{0,\infty}-\eps$. This yields the lower bound of Theorem~\ref{thm:polynomial_regime} for $G$. As before, the proof for SFP is verbatim the same, and the proofs of the linear lower bound (i.e., the lower bound in Theorem~\ref{thm:linear_regime}) and the lower bound for GIRG (the lower bound of Theorem~\ref{thm:finite_graph}) are analogous. This concludes the case $\beta =\infty$, and concludes the proof of the lower bounds of Theorem~\ref{thm:threshold_regimes}.
\end{proof}

\section{Linear Regime, Upper Bound}\label{sec:linear-regime}

In the following section, we prove the upper bound of Theorem~\ref{thm:linear_regime}, that cost-distance in IGIRG and SFP scales at most linearly with Euclidean distance and the corresponding part of Theorem~\ref{thm:finite_graph} for finite GIRGs in a box. We shall make the renormalisation argument outlined on page \pageref{proof:idea-upper} quantitative. We start by classical results on iid bond percolation, but first a definition:
\begin{definition}[Deviation from straight line]\label{def:deviation}
    Given $u,v \in \R^d$, let $S_{u,v}$ denote the line segment between $u,v$. For $x\in \R^d$ we define the deviation $\mathrm{dev}_{uv}(x):=\|x-S_{u,v}\|$. Given a path $\pi=x_1\ldots x_k$ in a graph $G$ with vertices in $\R^d$, we define the \emph{deviation of $\pi$ from $S_{uv}$} as $\mathrm{dev}_{uv}(\pi):=\max\{\mathrm{dev}_{uv}(x_i) \colon i \in [k]\}$. Finally, let  the \emph{deviation of $\pi$} be $\mathrm{dev}(\pi) := \max\{\mathrm{dev}_{x_1x_k}(x_i) \colon i \in [k]\}$, i.e., its deviation from the segment between the endpoints.
\end{definition}
The next two lemmas state two properties for bond percolation with high enough edge-density in $\Z^d$. First, the density of the infinite component is large in every ball $B_{(\log r)^{3/2}}(y)$, where $y$ may vary through $r^d$ vertices around the origin. Second, vertices in the infinite component can be joined by a path of linear length with small deviation. The first statement is implicit in~\cite{DP-percolation}, the second in~\cite{antal1996chemical}. We give proofs in the appendix for completeness on pages \pageref{proof:lemma3.2} and \pageref{proof:lemma3.3} respectively. Recall the notation $a\ll_\star b,c$ and $a\gg_\star b,c$ meaning that $a$ needs to be respectively sufficiently small/large in terms of the parameters $b,c$ from Section \ref{sec:notation}.
\begin{restatable}[Locally high-density infinite cluster in bond-percolation]{lemma}{BondPercolationDensity}
\label{lem:bond-percolation-density}
    Let $d \in \N$ with $d \ge 2$, let $\eps,\delta,\sigma \in (0,1)$, and let $r>0$.  Let $\omega^\star$ be an iid Bernoulli bond percolation on $\Z^d$ with edge-retention probability $p := 1-\eps$. Then whenever $r \ggs \eps, \delta$, and $\eps \lls\sigma,d$, then almost surely $\omega^\star$ has a unique infinite component $\calC_\infty^\star$ with $\pr(0\in \calC_\infty^\star)\ge 1-\sigma$ and
   \begin{equation}\label{eq:bond-percolation-density}
        \forall x\in \Z^d:\quad \pr\Big(\forall y\in B_r(x), \, \frac{|B_{(\log r)^{3/2}}(y) \cap \calC_\infty^\star|}{|B_{(\log r)^{3/2}}(y)\cap\Z^d|} \ge 1-\sigma\Big) \ge 1-\delta.
    \end{equation}
\end{restatable}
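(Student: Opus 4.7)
The lemma combines two classical facts about high-density Bernoulli bond percolation on $\Z^d$ with $d\ge 2$: (a) that $\theta(p):=\pr(0\in\calC_\infty^\star)\to 1$ as $p\to 1$, giving the existence, uniqueness, and $\pr(0\in\calC_\infty^\star)\ge 1-\sigma$ assertions in the first sentence of the lemma; and (b) that the local density of $\calC_\infty^\star$ inside a box of side $R$ concentrates around $\theta(p)$ with surface-order large-deviation probability $\exp(-cR^{d-1})$ (the Deuschel--Pisztora estimate~\cite{DP-percolation}). Ingredient (b), combined with a union bound over the $O(r^d)$ centres $y\in B_r(x)\cap\Z^d$, will yield~\eqref{eq:bond-percolation-density}.

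For (a), the bound $\theta(p)\ge 1-\sigma/2$ follows from a Peierls-type contour count: $\{0\notin\calC_\infty^\star\}$ implies that a closed dual cut-surface surrounds $0$; the number of such surfaces of size $n$ is at most $A(d)^n$ and each forces $n$ independent edges to be closed, contributing $\eps^n$. Summing the geometric series gives $\pr(0\notin\calC_\infty^\star)\le\sigma/2$ once $\eps\lls\sigma,d$. Existence of an infinite cluster is then immediate from translation invariance, and uniqueness is Burton--Keane (or an elementary consequence of the same contour count for $p$ very close to~$1$).

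For (b), fix $y\in B_r(x)\cap\Z^d$ and set $R:=(\log r)^{3/2}$. Deuschel--Pisztora~\cite{DP-percolation} provides constants $c_0,C_0>0$ depending on $p$ and $d$ such that
\begin{equation*}
\pr\Bigl(\tfrac{|B_R(y)\cap\calC_\infty^\star|}{|B_R(y)\cap\Z^d|}\le (1-\sigma/2)\theta(p)\Bigr) \le C_0\exp(-c_0 R^{d-1}).
\end{equation*}
Combining with $\theta(p)\ge 1-\sigma/2$ and $(1-\sigma/2)^2\ge 1-\sigma$, the local density at $y$ falls below $1-\sigma$ with probability at most $C_0\exp(-c_0 R^{d-1})$. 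Since $d\ge 2$, $R^{d-1}\ge R=(\log r)^{3/2}$, so this exceptional probability is superpolynomially small in $r$. A union bound over the $\le(2r+1)^d$ possible $y$'s gives
\begin{equation*}
\pr(\exists\,y\in B_r(x)\cap\Z^d\text{ with local density} <1-\sigma) \le C_0(2r+1)^d\exp\bigl(-c_0(\log r)^{3/2}\bigr),
\end{equation*}
which is $\le\delta$ once $r\ggs\eps,\delta$.

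The main obstacle is ingredient (b): the Deuschel--Pisztora surface-order estimate is itself a substantial theorem. For a self-contained proof of the weaker bound we actually need, I would replace it with a direct block-percolation renormalization: partition $\Z^d$ into $L$-blocks with $L=c\log r$, and call a block $Q$ good if the restriction of $\omega^\star$ to the $3L$-neighborhood of $Q$ has a unique large cluster, of density $\ge 1-\sigma/2$ in $Q$, that crosses all $2d$ faces of $3Q$. By exponential decay of finite cluster diameters at high $p$, a block is bad with probability $\le e^{-c'L}$, and bad-block events at spatial separation $\ge 6L$ are independent since they depend on disjoint edge sets. A Chernoff bound on the independent sub-lattice then controls the bad-block fraction inside $B_R(y)$ and, after the union bound over $y\in B_r(x)\cap\Z^d$, still yields a bound $o(\delta)$ provided $r\ggs\eps,\delta$.
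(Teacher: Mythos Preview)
Your proposal is correct and follows essentially the same strategy as the paper: both reduce to the Deuschel--Pisztora surface-order large-deviation estimate for the cluster density in a box, combined with a union bound over the $O(r^d)$ relevant centres, exploiting that $(\log r)^{3(d-1)/2}\gg \log r$ so that the union bound closes. The paper's execution differs in two minor technical respects. First, it cites the version of Deuschel--Pisztora that speaks only of the \emph{largest} cluster inside a box (Theorem~\ref{thm:DP-percolation}), and then spends a Borel--Cantelli step to identify these large clusters across a nested sequence of boxes with the unique $\calC_\infty^\star$; you instead invoke directly the form that bounds the density of $\calC_\infty^\star$ itself, which is equally standard and saves that step. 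Second, rather than taking the union bound over balls $B_{(\log r)^{3/2}}(y)$ directly, the paper introduces an auxiliary partition of $S_{4r}$ into boxes of side $\ell(r)=(\log r)^{4/3}$, applies the estimate to each of these, and then covers each ball $B_{(\log r)^{3/2}}(y)$ by a union of such boxes; this intermediate scale is not needed in your more direct argument. Your sketched self-contained block-renormalisation alternative is also sound and would work here, though neither you nor the paper carries it out in detail.
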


\begin{restatable}[Linear distances in bond-percolation]{lemma}{BondPercolationLinear}
\label{lem:bond-percolation-linear}
    Let $d \in \N$ with $d \ge 2$, let $\zeta,\eps,c \in (0,1)$, and let $\kappa,r>0$. Let $\omega^\star$ be an iid Bernoulli bond percolation on $\Z^d$ with edge-retention probability $p := 1-\eps$. Let $\calC_\infty^\star$ be the infinite component of $\omega^\star$. For all $x\in\Z^d$, let $\calA^\star_{\mathrm{linear}}(r,\kappa,\zeta,x)$ be the event that for all $y\in\calC_\infty^\star \setminus B_r(x)$ there  is a path $\pi$ from $x$ to $y$ with length at most $|\pi|\le\kappa |x-y|$ and deviation at most $\mathrm{dev}(\pi)\le \zeta |x-y|$. Then whenever $r\ggs \zeta,\eps$, and $\kappa, 1/\eps,1/c \ggs d$,
    \begin{align}\label{eq:bond-percolation-distances}
        \pr(\calA^\star_{\mathrm{linear}}(r,\kappa,\zeta,x) \mid x\in\calC_\infty^\star)\ge 1-e^{-cr}.
    \end{align}
\end{restatable}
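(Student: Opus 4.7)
The plan is to prove Lemma~\ref{lem:bond-percolation-linear} via the Antal--Pisztora~\cite{antal1996chemical} renormalisation scheme, reading off the deviation control directly from the geometry of the renormalised percolation. Fix a block scale $K=K(\eps,d)$ and declare a block $B_K(Kz)$ for $z\in\Z^d$ to be \emph{good} if $\omega^\star$ restricted to $B_{3K}(Kz)$ contains a unique crossing cluster in every coordinate direction and every open cluster of diameter at least $K/10$ meets this crossing cluster. By standard Peierls-type bounds for supercritical bond percolation, when $\eps\lls d$ and $K\ggs \eps,d$, the indicators of goodness form a finite-range dependent site percolation on $\Z^d$ of density $1-\delta(\eps,K)$, with $\delta\to 0$ as $\eps\to 0$ and $K\to\infty$; in particular, the good-site process stochastically dominates independent supercritical site percolation of arbitrarily large density, and within adjacent good blocks the crossing clusters merge into a single path of length $O(K)$.

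For a fixed $y\in\calC_\infty^\star\setminus B_r(x)$, consider the tube
\[
T_{x,y}:=\{z\in\R^d : \mathrm{dev}_{xy}(z)\le \zeta|x-y|/3\},
\]
of length $|x-y|$ and cross-section radius at least $\zeta r/3$. On the renormalised lattice, $T_{x,y}\cap K\Z^d$ is a slab whose cross-section has radius at least $\zeta r/(3K)$, which is as large as desired once $r\ggs \zeta,\eps$. By the standard slab theorem (Grimmett--Marstrand plus supercritical large-deviation bounds), there is a unique crossing good-site cluster across $T_{x,y}\cap K\Z^d$ whose renormalised chemical distance from the end-slice containing $x$ to the end-slice containing $y$ is at most $\kappa_0 |x-y|/K$, and this fails with probability at most $\exp(-c_0|x-y|)$, where $c_0=c_0(\eps,d)>0$. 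Unfolding via the good-block structure produces an $\omega^\star$-path of length at most $\kappa_0 C_1|x-y|$ entirely contained in $T_{x,y}$, joining the crossing clusters of the first and last good blocks on the renormalised backbone.

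It remains to attach $x$ and $y$ to this backbone. Since $x,y\in\calC_\infty^\star$, standard estimates for supercritical bond percolation show that each of them is connected to the local crossing cluster of a nearby good block within $O(\log|x-y|)$ steps except on an event of probability at most $\exp(-c_1|x-y|)$. Concatenating the two short connections with the backbone yields a path $\pi$ from $x$ to $y$ with $|\pi|\le \kappa|x-y|$ and $\mathrm{dev}(\pi)\le \zeta|x-y|$, provided $\kappa\ggs d,1/\eps$ and $r\ggs \zeta,\eps$. Thus the individual failure probability satisfies
\[
\pr\bigl(\text{no good }\pi\text{ for this }y \,\bigm|\, x\in\calC_\infty^\star\bigr)\le \exp(-c_2|x-y|),
\]
for some $c_2=c_2(\eps,d)>0$, using that $\pr(x\in \calC_\infty^\star)$ is bounded away from $0$.

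Finally, to make the bound simultaneous over all $y\in\calC_\infty^\star\setminus B_r(x)$, decompose into annuli $A_j:=\{y\in\Z^d : 2^j r\le |x-y|<2^{j+1}r\}$ for $j\ge 0$, each containing at most $C_3(2^j r)^d$ lattice points. Summing the individual failure bound yields
\[
\sum_{j\ge 0}C_3 (2^j r)^d \exp(-c_2 \cdot 2^j r)\ \le\ \exp(-cr),
\]
once $r\ggs \eps$ and $1/c\ggs d$, which is the required bound. The main obstacle in carrying out this plan rigorously is the slab step in the middle paragraph: we need existence and exponential uniqueness of a linear-length crossing cluster inside $T_{x,y}\cap K\Z^d$ for the dependent good-site percolation, with failure probability exponential in $|x-y|$ rather than only in the cross-section width; this follows from the Antal--Pisztora slab arguments applied to the good-site process after another level of renormalisation, exploiting that the cross-section grows at least linearly with $r$.
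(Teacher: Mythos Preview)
Your approach is workable but takes a genuinely different route from the paper's. The paper does \emph{not} redo the Antal--Pisztora renormalisation inside a tube. Instead, it uses \cite[Theorem~1.1]{antal1996chemical} as a black box to get the linear-scaling estimate $\pr(\{x\leftrightarrow y\}\cap\{d_\star(x,y)>\kappa'|x-y|\})\le e^{-c'|x-y|}$, and then obtains deviation control by a separate, elementary geometric argument: cover the segment $S_{x,y}$ by $k\le 2K$ overlapping cubes $Q^{(1)},\ldots,Q^{(k)}$ of side $|x-y|/K$ with $K=(\kappa'+1)\sqrt{d}/\zeta$; pick intermediate points $s_i\in\calC_\infty^\star\cap Q^{(i)}\cap Q^{(i+1)}$ (which exist by the density estimate~\eqref{eq:percolation-density-1}); and apply the black-box linear scaling to each pair $s_{i-1},s_i$. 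Each sub-path has length at most $\kappa'\sqrt{d}|x-y|/K$, hence deviation from $S_{x,y}$ at most $(\kappa'+1)\sqrt{d}|x-y|/K=\zeta|x-y|$, and concatenating gives total length at most $\kappa|x-y|$.

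The paper's argument is shorter because it reduces to an already-proved statement and avoids any slab machinery; the only non-black-box input is the high-density cluster bound from~\cite{DP-percolation}. Your approach, working entirely inside a tube, yields the length and deviation bounds simultaneously and is conceptually unified, but at the cost of re-deriving (or carefully adapting) the renormalisation and slab-crossing estimates. The obstacle you flag at the end is real but not fatal: since your tube cross-section is $\zeta|x-y|/3$, which scales linearly with $|x-y|$, standard slab/block arguments do give failure probability exponential in $|x-y|$, not merely in a fixed slab width. One quibble: your attachment step claims $O(\log|x-y|)$ steps with failure probability $\exp(-c_1|x-y|)$, but the $\log$ is neither justified nor needed---what you actually want is that, conditional on $x\in\calC_\infty^\star$, the cluster of $x$ reaches the crossing cluster of some good block within distance $o(\zeta|x-y|)$ except with probability $\exp(-c_1|x-y|)$, which follows from the good-block definition (any open cluster of diameter $\ge K/10$ merges with the crossing cluster) combined with a Peierls bound on the radius of the bad-block component containing $x$.
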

\subsection{General high-density random geometric graphs and renormalisation-coupling}
In order to apply Lemmas~\ref{lem:bond-percolation-density} and~\ref{lem:bond-percolation-linear} to IGIRG and SFP, we will need a coupling to bond percolation. We provide this in Lemma~\ref{lem:bond-percolation-coupling} for any graph model satisfying Definition~\ref{def:dense-geometric} below. Given a graph $G = (\mathcal V,\mathcal E)$ with vertex set $\mathcal V \subseteq \R^d$ and a set $A \subseteq \R^d$, we write $G[A]$ for the induced subgraph of $G$ on $\mathcal V[A] := \mathcal V\cap A$. Two (half-open) boxes are called \emph{neighbouring} if their closures have non-empty intersection.
\begin{definition}[Dense geometric random graphs, generally]\label{def:dense-geometric}
    Let $d \in \N$  and let $\calS$ be a partition of $\R^d$ into half-open boxes of the form $[a_1,a_1+R)\times\dots\times[a_d,a_d+R)$ for some $a_1,\dots,a_d \in \R$ and side-length $R>0$.
    When $\calS$ is given, for all $z \in \Z^d$, we write $S_z$ for the unique box in $\calS$ containing $R\!\cdot \!z$.
    Let $G=(\calV,\calE)$ be a random graph whose vertex set is either $\Z^d$ or is given by a homogeneous PPP on $\R^d$. For all $\eps > 0$, we say $G$ is an \emph{$(R,\eps)$-dense geometric graph} with \emph{boxing scheme} $\calS$ if it satisfies the following properties (i)-(iii), and we call $G$ a \emph{strong $(R,D,\eps)$-dense geometric graph} if it satisfies (i)-(iv): 
    \begin{enumerate}[(i)]
        \setlength\itemsep{-0.3em}
        \item\label{item:dense-1} $G[S_1]$ and $G[S_2]$ are independent for any disjoint Lebesgue measurable sets $S_1,S_2$;
        \item\label{item:dense-2} for all boxes $S \in \calS$, $\pr(G[S] \text{ is non-empty and connected}\,) \geq 1-\eps$; 
        \item\label{item:dense-3} for all neighbouring boxes $S_1, S_2 \in \calS$,        
         \[
            \pr(G[S_1 \cup S_2] \text{ contains at least one edge from $S_1$ to $S_2$}) \geq 1-\eps;
        \]
        \item\label{item:dense-4} for all boxes $S \in \calS$, $\pr(G[S] \text{ has diameter at most $D$}\,) \geq 1-\eps$.
    \end{enumerate}
\end{definition}

Observe that $z\mapsto S_z$ is a bijection from $\Z^d$ to $\calS$; this forms the basis of the coupling in Lemma~\ref{lem:bond-percolation-coupling} below, essentially acting as renormalisation. In IGIRG and SFP, the random graph $G_M$ described at the start of the section will take the role of the dense geometric graph (see Corollary~\ref{cor:dense-subgraph}).
\begin{lemma}[Renormalisation-coupling to bond-percolation]\label{lem:bond-percolation-coupling}
    Let $d \in \N$, $\eps \in (0,1)$, and $K,R>0$. 
    Suppose $K,1/\eps \ggs R,d$ and let $G$ be an $(R,\eps)$-dense geometric graph with boxing scheme $\calS$, and let $\omega^\star$ be an iid Bernoulli bond percolation with retention probability $1 - 20d\eps$. Then there exists a coupling between $G$ and $\omega^\star$ such that whenever $z_1z_2$ is open in $\omega^\star$: $G[S_{z_1}]$ and $G[S_{z_2}]$ are non-empty and connected; $G[S_{z_1}]$ and $G[S_{z_2}]$ contain at most $K$ vertices each; and there is an edge from $\calV[S_{z_1}]$ to $\calV[S_{z_2}]$ in $G$. 
\end{lemma}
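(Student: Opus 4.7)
My plan is to build a coupling between $G$ and an iid Bernoulli bond percolation on $\Z^d$ via renormalisation: local ``goodness'' events on each box of $\calS$ will define a $1$-dependent field on the edges of $\Z^d$, which I will then stochastically dominate from above by iid Bernoulli using the Liggett-Schonmann-Stacey (LSS) theorem.

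Concretely, for each $z\in\Z^d$ I would introduce the event $A_z$ that $G[S_z]$ is non-empty, connected, and has at most $K$ vertices, and for each lattice edge $e=z_1z_2$ of $\Z^d$ the event $B_e$ that $G$ contains at least one edge between $\calV[S_{z_1}]$ and $\calV[S_{z_2}]$. Properties (ii) and (iii) of Definition~\ref{def:dense-geometric} immediately give $\pr(A_z)\ge 1-\eps-\pr(|G[S_z]|>K)$ and $\pr(B_e)\ge 1-\eps$. The vertex-count term is zero in the $\Z^d$-vertex case once $K\ge (R+1)^d$, and is handled by a Chernoff bound on $|\calV[S_z]|\sim\mathrm{Poisson}(R^d)$ in the PPP case, so that $\pr(A_z^c)\le 2\eps$ whenever $K$ is large enough relative to $R,d,1/\eps$ (permitted by $K,1/\eps \ggs R,d$). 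Defining the ``bad edge'' indicator $X_e := \mathbf 1(A_{z_1}^c\cup A_{z_2}^c\cup B_e^c)$ and applying a union bound gives $\pr(X_e=1)\le 5\eps$.

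Next I would observe that $X_e$ is measurable with respect to $G[S_{z_1}\cup S_{z_2}]$, so property~(i) immediately forces $X_e$ and $X_{e'}$ to be independent whenever the lattice edges $e,e'$ share no endpoint in $\Z^d$. Hence $(X_e)$ is a $1$-dependent $\{0,1\}$-valued field on the line graph of $\Z^d$, whose maximum degree is $4d-2$. Applying the LSS domination theorem then produces iid Bernoulli variables $(Y_e)$ on a common probability space with $X_e\le Y_e$ a.s.\ and $\pr(Y_e=1)=q$, where $q=q(5\eps,4d-2)$ tends to $0$ with $\eps$; tracking the standard constants shows $q\le 20d\eps$ once $\eps$ is small enough in $d$, which is permitted by $1/\eps\ggs R,d$. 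If necessary I would boost $q$ up to exactly $20d\eps$ by an independent coin flip on each edge (which preserves the domination $X_e\le Y_e$). Declaring $e$ open in $\omega^\star$ exactly when $Y_e=0$ then produces iid Bernoulli bond percolation with retention probability $1-20d\eps$, and the event $\{e\text{ open}\}$ forces $X_e=0$, i.e.\ $A_{z_1}\cap A_{z_2}\cap B_e$, which is precisely the three conclusions of the lemma.

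The main obstacle I expect is pinning down the explicit constant $20d$; qualitative domination is an immediate consequence of LSS, but the exact prefactor needs either the quantitative form of the theorem or a more hands-on construction, for instance by partitioning the lattice edges into the $2d$ direction classes (each consisting of pairwise independent edges under property~(i) alone, once one thins further by a $2$-coset of $\Z^d$) and dominating one independent sub-family at a time via conditional coin flips against independent uniforms attached to each box.
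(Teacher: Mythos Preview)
Your approach is correct and essentially identical to the paper's: define local ``good box'' and ``good edge'' events, observe that the resulting bad-edge indicators form a $1$-dependent field on the line graph of $\Z^d$, and stochastically dominate by iid Bernoulli to produce $\omega^\star$. The only difference is packaging: you invoke LSS as a black box, while the paper proves the domination by hand --- it lower-bounds $\pr(zz'\text{ open}\mid S\text{ open})$ for $S\subseteq N(zz')$ by the crude inequality $\pr(A\mid B)\ge \pr(A\cap B)\ge 1-(|S|+1)\cdot 5\eps\ge 1-20d\eps$, iterates to obtain $\pr(\text{all of }S\text{ open})\ge p^{|S|}$, and then applies Strassen's theorem. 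This is precisely the ``hands-on construction'' you flag as an alternative, and it is how the constant $20d$ actually falls out; the standard LSS constants for $1$-dependent fields are not obviously this clean, so your instinct that pinning down the prefactor is the real content is exactly right.
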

\begin{proof} In Definition \ref{def:dense-geometric}, the vertex set is either a homogeneous PPP or $\Z^d$.  
    If $\calV$ is a homogeneous PPP, there exists $K>0$ such that for all boxes $S \in \calS$,
    \begin{align}\label{eq:dense_geometric_K}
        \pr(|\calV[S]| \le K) \ge 1-\eps.
    \end{align}
    If $\calV=\Z^d$, then $|\calV[S]| \le (R+1)^d$ and so \eqref{eq:dense_geometric_K} holds trivially with $K = (R+1)^d$.
   
    We now carry out a one-step renormalisation and define a site-bond percolation $\omega$ on $\Z^d$. Recall that $\calS$ in Definition \ref{def:dense-geometric} is a boxing scheme with side-length $R$ and $S_z$ is the box containing $R\!\cdot\!z$ for $z\in \Z^d$. In the renormalised lattice, we set a \emph{site} $z\in\Z^d$ \emph{occupied} in $\omega$ if $G[S_z]$ contains at most $K$ vertices and is connected. We set two neighbouring sites $z,z'$ connected by an \emph{open bond} in $\omega$ if both sites are occupied and there is at least one edge in $G$ between $G[S_{z}], G[S_{z'}]$. By Definition~\ref{def:dense-geometric}\eqref{item:dense-1}, sites are occupied independently of each other in $\omega$ with probability at least $1-2\eps$ by \eqref{item:dense-1},\eqref{item:dense-2} and \eqref{eq:dense_geometric_K}, and bonds  that do not share a site are also open independently by \eqref{item:dense-1}. However, bonds that do share a site ($zz'$ and $zz''$) are \emph{not} open independently since they are both influenced by $G[S_z]$.
    
    We now couple $\omega$ to a Bernoulli bond percolation $\omega^\star$ on $\Z^d$. Similar ideas have been used before --- see \cite{andjel1993characteristic, liggett1997domination} in particular. By Definition~\ref{def:dense-geometric}\eqref{item:dense-2} and \eqref{item:dense-3} and by~\eqref{eq:dense_geometric_K}, every bond is open with probability at least $1-5\eps$, where the factor of $5$ comes from a union bound over the two sites being occupied and over \eqref{item:dense-3}. We next show an approximate independence as follows. Consider a bond $zz'$ in $\omega$, and let $N(zz') := \{\text{bonds incident to $z$ or $z'$}\} \setminus \{zz'\}$, with size $|N(zz')| = 4d-2$.
    For any set of bonds $S \subseteq N(zz')$,
    \begin{align*}
    \pr(zz' \text{ open in } \omega\mid \text{all bonds in $S$ open}) \nonumber & \geq \pr(zz' \text{ open} \text{ and } \text{all bonds in $S$ open}) \\ 
    & \geq 1-(4d-1)\cdot 5\eps \geq 1-20d\eps=:p
    \end{align*}
    by a union bound. 
    Using that bonds that do not share a site are independently open, one can iterate this argument to show that for every finite set $S$ of bonds, $\pr(\text{all bonds in $S$ are open}) \geq p^{|S|}$. Hence, by Strassen's theorem~\cite{lindvall1999strassen} we can couple $\omega$ with an independent bond percolation $\omega^{\star}$ on $\Z^d$ with retention-probability $p$, where every open bond in $\omega^{\star}$ is open in $\omega$.
 \end{proof}

\begin{definition}[Bernoulli-induced infinite subgraph]\label{def:dense-geometric-H}
    Let $\eps\in (0,1)$ be such that Lemma~\ref{lem:bond-percolation-density} applies with $\eps_{\ref{lem:bond-percolation-density}}:=20d \eps$ for some $r>0$ and some $\delta,\sigma \in(0,1)$. Let $G$ be an $(R,\eps)$-dense geometric graph with boxing scheme $\calS$ and let $\omega^\star$ be an iid Bernoulli bond percolation process with retention probability $p := 1-20d\eps$ given by the coupling of Lemma~\ref{lem:bond-percolation-coupling}. When $d\ge 2$, write $\calC_\infty^\star$ for the (unique) infinite component of $\omega^\star$ guaranteed by Lemma~\ref{lem:bond-percolation-density}. We define then the following infinite subgraph of $G$:
    \begin{equation}\label{eq:h-infty}
        \calH_\infty(G,\calS,\omega^\star) = \bigcup_{z \in \calC_\infty^\star} \calV[S_z].
    \end{equation}
    When there is no danger of confusion, we write $\calH_\infty := \calH_\infty(G,\calS,\omega^\star)$ and we may not explicitly define the bond percolation process $\omega^\star$.
\end{definition}
 The graph $\calH_\infty$ corresponds to vertices in renormalised boxes that belong to $\calC_\infty^\star$ of the renormalised bond-percolation model, hence we "blow-up" $\calC_\infty^\star$ to correspond to boxes in $G$. By the connectivity of $\calC_\infty$, also $G[\calH_\infty]$ is a.s.\ connected by the properties of the coupling set out in Lemma~\ref{lem:bond-percolation-coupling}. However, $\calH_\infty$ may only be a proper subset of some infinite component $\calC_\infty$ of $G$. Moreover, $G$ need not have a unique infinite component. Indeed, suppose $G$ is a random graph model with a PPP as $\calV$, where each vertex is coloured red or blue independently at random, and there is no edge with endpoints of different colours. If the probability of `red'  is sufficiently high with respect to the box side length $R$, so that with probability at least $1-\eps$ a box $S\in\calS$ contains no blue vertices, then Definition~\ref{def:dense-geometric}\eqref{item:dense-1}--\eqref{item:dense-2} may be satisfied, and there will be an infinite component of red vertices. However, there may also be an infinite component of blue vertices with arbitrary behaviour. Of course, if $\eps$ is small, then 
 $\calH_\infty$ has density larger than $1/2$ and so no other infinite component can have density larger than $1/2$, so $\calH_\infty$ is the uniquely determined as the infinite component with density close to $1$. 
 
 \subsection{Local denseness and linear distances in dense geometric random graphs}
 We now show in Lemma~\ref{lem:dense-geometric-locally-dense} that $\calH_\infty$ is ``locally dense'' in $G$, with density close to one, and in Lemma~\ref{lem:dense-geometric-linear-paths} that $\calH_\infty$ is a well-connected set containing short low-deviation paths between many pairs of vertices.
\begin{restatable}[Locally high-density Bernoulli-induced infinite subgraph]{lemma}{LocallyDense}
\label{lem:dense-geometric-locally-dense}
    Let $d \in \N$ with $d \ge 2$,  let $\eps,\delta,\sigma \in (0,1)$, and let $r,R>0$. 
    Suppose that $r\ggs \eps, \delta,R$, and that $\eps \lls \sigma,d$.
 Let $G$ be an $(R,\eps)$-dense geometric graph with boxing scheme $\calS$. Then a.s.\ $\calH_\infty$ is infinite and $G[\calH_\infty]$ defined in \eqref{eq:h-infty} is connected, for any box $S\in \calS$, $\pr(\calV\cap S\subseteq \calH_\infty)\ge 1-\sigma$ and  
 moreover
    \begin{align}
        \forall x\in\R^d:\quad \pr\left( \forall y\in B_r(x), \, \frac{|B_{(\log r)^2}(y) \cap \calH_\infty|}{|B_{(\log r)^2}(y)\cap\calV|} \ge 1-\sigma\right) \ge 1-\delta.  \label{eq:dense-geometric-density}
    \end{align}
\end{restatable}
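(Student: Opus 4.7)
My plan is to combine the renormalisation coupling of Lemma~\ref{lem:bond-percolation-coupling} with the bond-percolation density result of Lemma~\ref{lem:bond-percolation-density}, applied to the coupled $\omega^\star$ at retention probability $1-20d\eps$. I set $\eps_{\ref{lem:bond-percolation-density}} := 20d\eps$, and pick the internal parameters $\sigma_{\ref{lem:bond-percolation-density}}, \delta_{\ref{lem:bond-percolation-density}}$ as small constant multiples of $\sigma, \delta$. The a.s.\ existence of a unique infinite cluster $\calC_\infty^\star$ from Lemma~\ref{lem:bond-percolation-density} immediately yields both that $\calH_\infty$ is infinite and that $G[\calH_\infty]$ is connected: every site $z \in \calC_\infty^\star$ contributes a non-empty connected subgraph $G[S_z]$, and any two sites $z_1, z_2 \in \calC_\infty^\star$ joined by an open bond in $\omega^\star$ yield an edge between $\calV[S_{z_1}]$ and $\calV[S_{z_2}]$ in $G$, directly by the coupling. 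The single-box statement $\pr(\calV\cap S\subseteq\calH_\infty)\ge 1-\sigma$ for $S=S_z$ follows from the coupling ($z\in\calC_\infty^\star \Rightarrow \calV[S_z]\subseteq\calH_\infty$) together with translation invariance and $\pr(z\in\calC_\infty^\star)\ge 1-\sigma$ from Lemma~\ref{lem:bond-percolation-density}, provided $\eps$ is small enough in terms of $\sigma, d$.

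For the local density bound~\eqref{eq:dense-geometric-density}, I first apply Lemma~\ref{lem:bond-percolation-density} to $\omega^\star$ at $\Z^d$-scale $r':=\lceil r/R\rceil$, internal density parameter $\sigma_0:=\sigma/C_d$ (with $C_d$ a large dimensional constant) and failure probability $\delta/2$. This produces an event of probability $\ge 1-\delta/2$ on which, for every $z_0 \in B_{r'}(x/R)\cap\Z^d$, the density of $\calC_\infty^\star$ in $B_{(\log r')^{3/2}}(z_0)\cap\Z^d$ is at least $1-\sigma_0$. I then upgrade this small-ball density to a larger-ball density by a standard double-counting/averaging argument: since $r \ggs R$ gives $(\log r)^2/R \gg (\log r')^{3/2}$, the larger ball $B_{\lceil(\log r)^2/R\rceil}(z_y)\cap\Z^d$ (where $z_y$ is the integer site with $y\in S_{z_y}$) can be covered by a bounded-overlap family of small balls, yielding density $\ge 1-2\sigma_0$, with boundary terms of relative order $(\log r)^{-1/2}$ absorbed into $\sigma_0$. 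Translated back to $\R^d$, this says that at least a $(1-2\sigma_0)$-fraction of the boxes $S_z$ intersecting $B_{(\log r)^2}(y)$ correspond to sites in $\calC_\infty^\star$, and are thus entirely contained in $\calH_\infty$.

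The last step converts this box-fraction statement to the desired vertex density. For SFP with $\calV = \Z^d$ this is immediate, since every box contains a deterministic $\Theta(R^d)$ integer points. For IGIRG with $\calV$ a PPP, I intersect with an auxiliary Chernoff event $\calA_{\mathrm{conc}}$ on which, simultaneously, the vertex count of $B_{(\log r)^2}(y)\cap\calV$ and that of the union of good (fully contained) boxes inside it each lie within a factor $1\pm\sigma_0$ of their respective expected values of order $(\log r)^{2d}$. Standard Poisson tail bounds give a per-$y$ failure probability of at most $e^{-c(\sigma_0)(\log r)^{2d}}$, which is super-polynomial in $r$, so a union bound over an $r^{O(d)}$ lattice discretisation of the admissible centres $y$ keeps $\pr(\calA_{\mathrm{conc}}^c) \le \delta/2$. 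Combining with the previous event, the surviving boundary-box contributions are of lower order $R^d(\log r)^{2(d-1)}$, and the required vertex-density ratio reduces to $(1-\sigma_0)(1-2\sigma_0)/(1+\sigma_0) \ge 1-\sigma$ after enlarging $C_d$.

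The main obstacle I expect is the scale bookkeeping in the double-counting upgrade: Lemma~\ref{lem:bond-percolation-density} links only a single pair of radii $(r', (\log r')^{3/2})$, whereas the target local radius $(\log r)^2/R$ is merely polylogarithmically larger, so controlling boundary contributions in the averaging and synchronising the PPP Chernoff bound with a sufficiently fine discretisation of the centre $y$ are the most delicate points; the rest of the argument is a clean transfer of the bond-percolation picture to the random-geometric-graph picture via the coupling.
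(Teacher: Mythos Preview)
Your overall strategy is sound and close to the paper's, but there is one genuine gap and one unnecessary detour.

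\textbf{The gap.} In your Poisson concentration step you write that ``the vertex count \ldots of the union of good (fully contained) boxes'' lies within $1\pm\sigma_0$ of its mean by standard tail bounds. But the set of ``good'' boxes is $B_\rho(y)^\star\cap\calC_\infty^\star$, and $\calC_\infty^\star$ is determined by $\omega^\star$, which through the coupling of Lemma~\ref{lem:bond-percolation-coupling} depends on $G$ and hence on $\calV$. So you cannot apply a Chernoff bound to $|\calV\cap(\text{good boxes})|$ as if the region were deterministic. The paper handles this by instead proving a statement uniform over \emph{all} small complements: for every subset $A\subseteq B_\rho(y)^\star$ with $|A|\le 4\sigma^3|B_\rho(y)^\star|$, the vertex count in $\bigcup_{z\in B_\rho(y)^\star\setminus A}S_z$ is close to its mean. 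The union bound over all such $A$ only beats the Chernoff exponent because the density slack is taken as $\sigma^3$ (so the entropy $\binom{|B_\rho(y)^\star|}{4\sigma^3|B_\rho(y)^\star|}$ is small), not $\sigma/C_d$ as you propose. With $\sigma_0=\sigma/C_d$ this union bound would fail.

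\textbf{The detour.} Your double-counting upgrade from radius $(\log r')^{3/2}$ to $(\log r)^2/R$ is unnecessary. The paper simply applies Lemma~\ref{lem:bond-percolation-density} with the outer radius $r^\star:=\exp\big(((\log r)^2/R)^{2/3}\big)$, chosen so that $(\log r^\star)^{3/2}=(\log r)^2/R$ exactly; since $r^\star>2r/R$ for large $r$, the conclusion of Lemma~\ref{lem:bond-percolation-density} at scale $r^\star$ directly gives the required $\calC_\infty^\star$-density in balls of the target radius around every renormalised centre $\lfloor y/R\rfloor$ with $y\in B_r(x)$. This sidesteps your averaging argument and the associated boundary bookkeeping entirely.
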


The statement in \eqref{eq:dense-geometric-density} gets stronger if one decreases the radius $(\log r)^2$, since then the infinite subgraph $\calH_\infty$ is present already at lower radii near every vertex $y$ inside $B_r(x)$. For our proofs later, $(\log r)^2$ is more than strong enough, but one could improve  \eqref{eq:dense-geometric-density} to having $\Theta(\log r)$ as radius around $y$.
If the vertex set of $G$ is $\Z^d$, then Lemma~\ref{lem:dense-geometric-locally-dense} follows from Lemma~\ref{lem:bond-percolation-density} (applied with some $\sigma_{\ref{lem:bond-percolation-density}}$ that is sufficiently small compared to $\sigma$ in \eqref{eq:dense-geometric-density}) by the coupling in Lemma~\ref{lem:bond-percolation-coupling}. If the vertex set comes from a Poisson point process, we use that the number of vertices in the boxes $B_{(\log r)^2}(y) \cap \calH_\infty$ concentrates around its mean and that only at most $\sigma_{\ref{lem:bond-percolation-density}}$-fraction of boxes have too few/too many vertices in $B_{(\log r)^2}(y) \setminus \calH_\infty$. The details are straightforward and we give a proof of Lemma \ref{lem:dense-geometric-locally-dense} in the appendix on page \pageref{proof:lemma3.7}.

\begin{lemma}[Linear graph distances in dense geometric random graphs]\label{lem:dense-geometric-linear-paths}
    Let $d \in \N$ with $d \ge 2$, let $\zeta,\eps,\delta \in (0,1)$, and let $\kappa,r,R,C>0$. 
    Let $G$ be an $(R,\eps)$-dense geometric graph with boxing scheme $\calS$. For $x \in \calV$, let $\calA_{\mathrm{linear}}(r,\kappa,C,\zeta,x)$ be the event that for all $u \in B_r(x)\cap \calH_\infty$ and $v \in \calH_\infty$, there is a path from $u$ to $v$ in $G$ of length at most $\kappa|u-v|+C$ and deviation at most $\zeta|u-v|+C$. Then whenever $C\ggs r \ggs \eps,\delta,\zeta,R,d$ and $\kappa \ggs R,d$ and $\eps\lls d$, then
$\pr(\calA_{\mathrm{linear}}(r,\kappa,C,\zeta,x))\ge 1-\delta$.
\end{lemma}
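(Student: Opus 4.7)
The proof plan is to reduce to the bond-percolation analogue (Lemma~\ref{lem:bond-percolation-linear}) via the renormalisation coupling of Lemma~\ref{lem:bond-percolation-coupling} and then lift paths. First, invoke Lemma~\ref{lem:bond-percolation-coupling} to couple $G$ with an iid Bernoulli bond percolation $\omega^\star$ on $\Z^d$ of retention probability $p = 1 - 20d\eps$, obtaining a constant $K = K(R,d,\eps)$ and the crucial property that whenever a bond $zz'$ is open in $\omega^\star$, the induced graphs $G[S_z], G[S_{z'}]$ are each connected with at most $K$ vertices and are joined by at least one $G$-edge. Since $\eps$ is sufficiently small in $d$, the retention probability $p$ is large enough to trigger both Lemma~\ref{lem:bond-percolation-density} (unique infinite cluster $\calC_\infty^\star$ with local density close to one) and Lemma~\ref{lem:bond-percolation-linear} (linear low-deviation paths in $\omega^\star$); recall that $\calH_\infty = \bigcup_{z \in \calC_\infty^\star} \calV[S_z]$.

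Second, choose an intermediate scale $r^\star := C' \log(r/\delta)$ with $C' = C'(d,c,\eps)$ large, and apply Lemma~\ref{lem:bond-percolation-linear} simultaneously at every renormalised starting vertex $z \in \Z^d$ with $S_z \subseteq B_{3r}(x)$, using parameters $\zeta^\star := \zeta$ and $\kappa^\star := \kappa R/(2(K+1))$. There are $O((r/R)^d)$ such starting vertices; a union bound against the failure probability $e^{-c r^\star}$ yields, for $r \ggs \eps,\delta,\zeta,R,d$, an event $\calE^\star$ of probability at least $1 - \delta/2$ on which every such $z \in \calC_\infty^\star$ admits a path in $\omega^\star$ to every $y \in \calC_\infty^\star \setminus B_{r^\star}(z)$ of length at most $\kappa^\star|z-y|$ and deviation at most $\zeta^\star|z-y|$. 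Apply Lemma~\ref{lem:bond-percolation-density} at $x$ (with a sufficiently small $\sigma$) to secure, with probability at least $1-\delta/2$, the event $\calE^{\mathrm{den}}$ that every ball of radius $(\log r)^{3/2}$ centred in $B_{2r}(x)$ meets $\calC_\infty^\star$.

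On $\calE^\star \cap \calE^{\mathrm{den}}$, fix $u \in B_r(x) \cap \calH_\infty$ and $v \in \calH_\infty$ with renormalised boxes $z_u, z_v \in \calC_\infty^\star$, and consider two cases. If $|z_u - z_v| \ge r^\star$, take the $\omega^\star$-path $\pi^\star$ from $z_u$ to $z_v$ and lift it to a path $\pi$ in $G$: in each visited box $G[S_z]$, traverse a shortest within-box path (at most $K$ edges) between arrival and departure vertex, and between consecutive boxes use the guaranteed inter-box edge. Then $|\pi| \le (K+1)|\pi^\star| + K$, and since every vertex of $\pi$ lies within $R\sqrt{d}$ of the corresponding point of $R \cdot \pi^\star$, while the Hausdorff distance between the segments $S_{uv}$ and $R \cdot S_{z_uz_v}$ is at most $R\sqrt{d}$, a routine triangle-inequality computation combined with $|z_u - z_v| \le |u-v|/R + 2\sqrt{d}$ yields $|\pi| \le \kappa|u-v|+C$ and $\mathrm{dev}(\pi) \le \zeta|u-v|+C$ for $C \ggs R,d$. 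If instead $|z_u - z_v| < r^\star$, then $|u-v| = O(r^\star R)$: use $\calE^{\mathrm{den}}$ to pick $w \in \calC_\infty^\star$ with $|w-z_u|, |w-z_v| \in [r^\star, 3r^\star]$ (this exists as the target shell has radius far exceeding $(\log r)^{3/2}$), note $Rw \in B_{3r}(x)$ so $w$ is one of our starting vertices, concatenate the $\omega^\star$-paths $z_u \to w$ and $w \to z_v$ of total length $O(\kappa^\star r^\star)$, and lift as before; both the resulting length $O((K+1)\kappa^\star r^\star)$ and the deviation $O(r^\star R)$ are absorbed in the additive $C$ since $C \ggs r \gg r^\star$.

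The principal obstacle is the ``close'' case $|z_u - z_v| < r^\star$: the linear-path guarantee of Lemma~\ref{lem:bond-percolation-linear} fails at short scales, so we must detour through an intermediary $w$. This simultaneously requires (i) applying Lemma~\ref{lem:bond-percolation-linear} at \emph{every} potential starting vertex in a neighbourhood of $B_r(x)$, which survives the union bound because $r^\star$ only needs to grow logarithmically in $r/\delta$, and (ii) locating $w \in \calC_\infty^\star$ in a suitable shell around $z_u$, which is precisely what the local density statement of Lemma~\ref{lem:bond-percolation-density} provides. Balancing $r^\star$ against the additive constant $C$ is the delicate part, and is exactly what the quantifier regime $C \ggs r \ggs \eps,\delta,\zeta,R,d$ in the statement permits.
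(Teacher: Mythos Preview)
Your overall architecture---couple to $\omega^\star$ via Lemma~\ref{lem:bond-percolation-coupling}, apply Lemma~\ref{lem:bond-percolation-linear} at every renormalised box meeting $B_{3r}(x)$, union-bound, and lift paths box by box---matches the paper's proof, and your ``far'' case ($|z_u-z_v|\ge r^\star$) is essentially identical. The genuine difference is in how you treat the ``close'' case $|z_u-z_v|<r^\star$. The paper handles this by a soft argument: since $C\ggs r$, one simply observes that the maximum graph-distance and deviation over all pairs in $\calC_\infty^\star\cap B_{3r/R}(x)$ is an a.s.\ finite random variable, so for $C' := \sqrt{C}$ large it is at most $C'$ with probability $\ge 1-\delta/2$. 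This avoids Lemma~\ref{lem:bond-percolation-density} entirely.

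Your detour-through-$w$ idea is more constructive, but as written it has a scale mismatch. You set $r^\star = C'\log(r/\delta) = O(\log r)$, then claim you can locate $w\in\calC_\infty^\star$ in an annulus of radii $[r^\star,3r^\star]$ around $z_u$ ``as the target shell has radius far exceeding $(\log r)^{3/2}$''. This is false: the shell has outer radius $3r^\star = O(\log r)$, which is $o\big((\log r)^{3/2}\big)$, so a single density ball from Lemma~\ref{lem:bond-percolation-density} cannot be fitted inside it, and the density statement gives you no control on whether $\calC_\infty^\star$ meets this thin annulus. The fix is trivial---take instead $r^\star := (\log r)^2$ (or anything $\gg (\log r)^{3/2}$ but $\ll r$); the union bound over $O((r/R)^d)$ boxes still succeeds since $e^{-c(\log r)^2}$ kills any polynomial in $r$, and the detour length $O(K\kappa^\star r^\star)$ and deviation $O(R r^\star)$ are still absorbed in $C\ggs r$. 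With that change your argument goes through; alternatively, the paper's absorb-in-$C$ trick is shorter and bypasses the density lemma.
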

\begin{proof}
    Let $\omega^\star$ be a bond percolation process with retention probability $1-20d\eps$ as required in Lemma \ref{lem:bond-percolation-coupling}, so that $\calH_\infty = \calH_\infty(G,\calS,\omega^\star)$ in \eqref{eq:h-infty} exists. Indeed, since $\eps\lls d$, by Lemma~\ref{lem:bond-percolation-density} 
    $\omega^\star$ has a unique infinite component $\calC_\infty^\star$ a.s.\ and $\calH_\infty$ is well-defined. Let $K>0$ be as in Lemma~\ref{lem:bond-percolation-coupling}, so that $K$ is a function of $d$ and $R$,
     and let $\kappa' = \sqrt \kappa$ and $C' = \sqrt{C}$.

    Denote the event $\calA^\star_{\mathrm{linear}}(r/R,\kappa',\zeta,z)$ of Lemma~\ref{lem:bond-percolation-linear} by $\calA^\star_1(z)$, so if $\calA_1^\star(z)$ occurs then there is a short low-deviation path from $z$ to any site in $\calC_\infty^\star \setminus B_{r/R}(z)$. Let $\calA_2^\star(z)$ be the event that for all $v \in \calC_\infty^\star\cap B_{r/R}(z)$, there is a path in $\omega^\star$ from $z$ to $v$ of length and deviation at most $C'$. Observe that if $\calA^\star_1(z) \cap \calA^\star_2(z)$ occurs, then for all $v \in \calC_\infty^\star$ there is a path in $\omega^\star$ from $z$ to $v$ of length at most $C' + \kappa'|z-v|$ and deviation at most $C' + \zeta|z-v|$.
    Recall that for $z\in \Z^d$, we denote by $S_z$ the box in $\calS$ that contains $R\!\cdot\!z$. We define our last event as
        \begin{align*}
        \calA^+(x) &:= \bigcap_{z\in\Z^d\,:\, S_z \cap B_r(x)\ne\emptyset} \Big(\big(\calA_1^\star(z) \cap \calA_2^\star(z)\big) \cup \{z \notin \calC_\infty^\star\}\Big).
    \end{align*}
    Using the coupling of Lemma \ref{lem:bond-percolation-coupling} we will prove that 
    \begin{align}\label{eq:dense-geometric-linear-main-event}
        \calA^+(x) &\subseteq \calA_{\mathrm{linear}}(r,\kappa,C,\zeta,x),\\\label{eq:dense-geometric-linear-A1-bound}
        \sum_{z \in \Z^d: S_z\cap B_r(x)\ne\emptyset}\pr\big(\neg\calA_1^\star(z) \cap \{z \in \calC_\infty^\star\}\big) &\le \delta/2,\\\label{eq:dense-geometric-linear-A2-bound}
        \pr\bigg(\bigcap_{z \in \Z^d: S_z\cap B_r(x)\ne\emptyset}\big(\calA_2^\star(z) \cup \{z\notin\calC_\infty^\star\} \big)\bigg) &\ge 1 - \delta/2.
    \end{align}
    Given~\eqref{eq:dense-geometric-linear-main-event}--\eqref{eq:dense-geometric-linear-A2-bound}, a union bound gives that $\pr(\calA_{\mathrm{linear}}(r,\kappa,C,\zeta,x)) \ge \pr(\calA^+)\ge  1-\delta$, as required.
We first prove~\eqref{eq:dense-geometric-linear-A1-bound}, starting with
    \[
        \sum_{z \in \Z^d\,:\,S_z\cap B_r(x)\ne\emptyset}\pr\big(\neg\calA_1^\star(z) \cap \{z \in \calC_\infty^\star\}\big) \le \sum_{z \in \Z^d\,:\,S_z\cap B_r(x)\ne\emptyset}\pr\big(\neg\calA_1^\star(z) \mid z \in \calC_\infty^\star\big).
    \]
    By Lemma~\ref{lem:bond-percolation-linear}, since $\kappa'\ggs d \ggs \eps$ and $r\ggs \eps,\zeta,R,d$, there exists $c_{\ref{lem:bond-percolation-linear}}$ depending only on $d$ such that each term in the sum is at most $e^{-c_{\ref{lem:bond-percolation-linear}}r/R}$. Since $S_z$ is a box of side-length $R$, the number of terms $|\{z\in \Z^d: S_z\cap B_r(x)\}|$ is at most $(3r/R)^d$, and $r\ggs \delta, R, d$ (and thus $r\ggs c_{\ref{lem:bond-percolation-linear}}$),
    \[
        \sum_{z \in \Z^d\,:\,S_z\cap B_r(x)\ne\emptyset}\pr\big(\neg\calA_1^\star(z) \cap \{z \in \calC_\infty^\star\}\big) \le (3r/R)^de^{-c_{\ref{lem:bond-percolation-linear}}r/R} \le \delta/2
    \]
for all sufficiently large $r$ given $\delta$.
    We next prove~\eqref{eq:dense-geometric-linear-A2-bound}. The maximum length and deviation of a path between any pair of sites in $\calC_\infty^\star \cap B_{3r/R}(x)$ is a random variable which is a.s.\ finite, so since $C'=\sqrt{C}\ggs \delta,r$ this maximum must be at most $C'$ with probability at least $1-\delta/2$, for all sufficiently large $C'$. Thus~\eqref{eq:dense-geometric-linear-A2-bound} follows.

    Finally, we prove~\eqref{eq:dense-geometric-linear-main-event}. Suppose $\calA^+(x)$ occurs, let $u \in B_r(x) \cap \calH_\infty$, and let $v \in \calH_\infty$ two vertices in $G$. Let $u^- \in \Z^d$ be such that $u \in S_{u^-}$, and let $v^- \in \Z^d$ be such that $v \in S_{v^-}$; thus $u^- = \lfloor u/R \rfloor$ and $v^- = \lfloor v/R \rfloor$, and $u^-,v^-\in\calC^\star_\infty$ per definition of $\calH_\infty$ in \eqref{eq:h-infty}. Since $\calA^+(x)$ occurs and $u^-\in\calC^\star_\infty$, the event $\calA_1^\star(u^-) \cap \calA_2^\star(u^-)$ also occurs; thus there exists a path $\pi^\star = z_1\dots z_m$ (for some $m$) from $u^-$ to $v^-$ in $\omega^\star$ with
    \begin{align*}
        |\pi^\star| \le C'+\kappa'|u^- - v^-|,\qquad \mbox{dev}(\pi^\star) \le C'+\zeta|u^- - v^-|.
    \end{align*}
    The statement of the lemma becomes weaker with larger $\kappa$, so we may just prove it for one fixed value $\kappa=\kappa(R,d)$, and deduce it for all larger values of $\kappa$. We may thus assume for this fixed value of $\kappa$ that $C'\ggs \kappa$ and $C' \ggs \kappa'=\sqrt{\kappa}$. For the same reason we may assume $C'\ggs \zeta$. Since also $C'\ggs R,d$ and the diameter of each box is at most $R\sqrt{d}$, we have
    \begin{align}\label{eq:dense-geometric-pistar}
        |\pi^\star| \le 2C'+ \kappa'|u - v|/R,\qquad \mbox{dev}(\pi^\star) \le 2C'+\zeta|u - v|/R.
    \end{align}
    By the properties of the coupling with $\omega^\star$ set out in Lemma~\ref{lem:bond-percolation-coupling}, for each $z_i\in \pi^\star$, the graph $G[B_{z_i}]$ is connected and contains between $1$ and $K$ vertices, and for all $i \le m-1$ there is an edge between $G[B_{z_i}]$ and $G[B_{z_{i+1}}]$. Since $u \in B_{z_1}$ and $v \in B_{z_m}$, we can thus find a path $\pi^G$ from $u$ to $v$ in $G[B_{z_1} \cup \dots \cup B_{z_m}]$. Since each box contains at most $K$ vertices, it follows from~\eqref{eq:dense-geometric-pistar} that 
    \[
        |\pi^G| \le Km = K(|\pi^\star|+1) \le 2K\kappa'|u-v|/R + K(2C'+1).
    \]
    Likewise, since each box has side length $R$ and diameter at most $R\sqrt{d}$,
    \[
        \mathrm{dev}(\pi^G) \le R\sqrt{d} + R\cdot\mathrm{dev}(\pi^\star) \le \zeta |u-v| + 2RC' + R\sqrt{d}.
    \]
    Recall that $K$ is a function of $d$ and $R$, and that $\kappa,C\ggs d,R$; we therefore recover $|\pi^G| \le \kappa|u-v|+C$ and $\mathrm{dev}(\pi^G) \le \zeta |u-v|+C $. Thus $\calA_{\mathrm{linear}}(r,\kappa,C,\zeta,x)$ occurs, and so $ \calA^+(x)\subseteq	\calA_{\mathrm{linear}}(r,\kappa,C,\zeta,x)$ as claimed.
\end{proof}
We will state the following corollary in a stronger form than necessary here;  part \eqref{item:cor-3} and the `strong' version of part \eqref{item:cor-1} below is only used in our companion paper~\cite{komjathy2022one1} but not in this paper. Recall from \eqref{eq:parameters} that $\mpar$ is the set of model parameters. In order to be able to talk about a number of vertices being in the vertex set, we shall work conditional on one of two events; these conditionings might be trivial for SFP. In particular, we write $\calF := \{x_1, \dots, x_t \in \calV\}$ to mean that the locations $x_1, \dots, x_t \in \R^d$ are part of the vertex set with unknown vertex weights from distribution $F_W$, and $\calF_M := \{x_1, \dots, x_t \in \calV_M\}$ for the event that the locations $x_1, \dots, x_t$ are part of the vertex set with unknown vertex weights in the interval  $[M,2M]$ for some large constant $M$. In other words, we (sometimes) work under certain Palm-measures of the underlying Poisson process.
\begin{corollary}[Linear costs in Bernoulli-induced infinite subgraph in IGIRG and SFP]\label{cor:dense-subgraph}
    Consider $1$-FPP in Definition \ref{def:1-FPP} on a graph $G=(\calV,\calE)$ that is an IGIRG or SFP satisfying assumptions \eqref{eq:power_law}--\eqref{eq:F_L-condition} with $\tau\in(2,3)$. Let $\delta,\eps,\sigma \in (0,1)$, let $r,M,C,\kappa,\zeta,t > 0$, and let $R := M^{2/d}/\sqrt{d}$. Suppose $C\ggs r \ggs M,\zeta,\delta$, and that $\kappa \ggs M \ggs \eps,\sigma,t, \mpar$. Finally, suppose that $\eps \lls\sigma,\mpar$. Let $I_M:=[M,2M]$, and let $G_M=(\calV_M,\calE_M)$ be the subgraph of $G$ formed by vertices $\calV_M:=\{v\in\calV: W_v\in I_M\}$ and edges $\calE_M:=\{uv\in\calE : u,v\in \calV_M, \cost{uv}\le M^{3\mu}\}$. For $t$ as above, let $\{x_1, \dots, x_t\} \subseteq \Z^d$ for SFP and $\{x_1, \dots, x_t \}\subseteq \R^d$ for IGIRG, and let $\calF := \{x_1, \dots, x_t \in \calV\}$ and $\calF_M := \{x_1, \dots, x_t\in\calV_M\}$. 
    Then:
\begin{enumerate}[(i)]
    \setlength\itemsep{-0.3em}
    \item\label{item:cor-1} For all dimensions $d\ge 1$, conditioned on either $\calF$ or $\calF_M$, the graph $G_M$ is a strong $(R,2, e^{-M^{3-\tau-\eps}})$-dense and an $(R,\eps)$-dense geometric graph in the sense of Definition \ref{def:dense-geometric}.
    \item\label{item:cor-2} For all dimensions $d\ge 2$, a.s.\ $\calH_\infty$ is infinite, $G[\calH_\infty]$ is connected, $G$ has a unique infinite component $\calC_\infty$, and $\calH_\infty \subseteq \calV(\calC_\infty) \cap \calV_M$.
    \item\label{item:cor-3} For all dimensions $d\ge 2$, and $x \in \R^d$,
    \begin{equation}\label{eq:dense-C_infinity^M}
        \pr\Big(\forall y\in B_r(x), \, \frac{|B_{(\log r)^2}(y) \cap \calH_\infty|}{|B_{(\log r)^2}(y)\cap\calV_M|} \ge 1-\sigma\,\Big|\,\calF\Big) \ge 1-\delta.
    \end{equation}
    \item\label{item:cor-4} For $x \in \R^d$, let $\calA_\mathrm{linearcost}(r,\kappa,C,\zeta,x)$ be the event that for all $u \in \calB_r(x) \cap \calH_\infty$ and all $v \in \calH_\infty$, there is a path from $u$ to $v$ of cost at most $\kappa |u-v| + C$ and deviation at most $\zeta |u-v| + C$. Then for all dimensions $d\ge 2$, 
    $\pr(\calA_{\mathrm{linearcost}}(r,\kappa,C,\zeta,z)\mid \calF) \ge 1-\delta$.
\end{enumerate}
\end{corollary}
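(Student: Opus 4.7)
My plan is to prove the four parts of the corollary in order, using the fact that restricting to vertex weights in the narrow window $I_M = [M,2M]$ makes $G_M$ behave essentially like a dense random geometric graph with constant connection probability on scale $R = M^{2/d}/\sqrt d$, once $M$ is large. The only new ingredient relative to the abstract lemmas \ref{lem:dense-geometric-locally-dense}--\ref{lem:dense-geometric-linear-paths} is the translation between graph distance (edge-count) in $G_M$ and cost distance in $G$, which is handled by the cutoff $\cost{uv} \le M^{3\mu}$ built into $\calE_M$.

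\textbf{Step 1 (part (i): dense geometric graph).} The box $S\in \calS$ has volume $R^d = M^2/d^{d/2}$, so the number of vertices of $\calV_M$ in $S$ is either Poisson with mean $\lambda_M := R^d\,\pr(W\in I_M)$ (IGIRG) or $\mathrm{Bin}(R^d,\pr(W\in I_M))$ (SFP). By \eqref{eq:power_law}, $\pr(W\in I_M) = \ell(M)M^{-(\tau-1)}(1+o(1))$, hence $\lambda_M = \Theta(\ell(M)M^{3-\tau})$, which exceeds $M^{3-\tau-\eps/2}$ once $M \ggs \eps,\mpar$. Conditioning on $\calF$ or $\calF_M$ changes these counts by at most $t$, which is absorbed. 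For any two positions $u,v$ in the same box or in neighbouring boxes one has $|u-v|\le 2R\sqrt d = 2M^{2/d}$, so $W_uW_v/|u-v|^d \ge (2^d d^{d/2})^{-1}$ and, by \eqref{eq:connection_prob}, $h(u-v,W_u,W_v)$ is bounded below by a positive constant $p_h = p_h(\underline c,\alpha,d)$. The additional cost cutoff requires $L_{uv} \le (M^3/W_uW_v)^\mu \ge (M/4)^\mu$, which occurs with probability $F_L((M/4)^\mu)\to 1$. Hence any candidate edge is present in $G_M$ with probability $\ge p_0 := p_h/2$. Independence across disjoint regions (Definition~\ref{def:dense-geometric}\eqref{item:dense-1}) is inherited from the underlying PPP/lattice model. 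The non-emptiness, connectivity, cross-box-edge and diameter-$\le 2$ properties all follow from standard concentration for dense Erd\H{o}s--R\'enyi-type graphs on $\ge \lambda_M/2$ vertices with edge probability $\ge p_0$: each failure has probability at most $\exp(-cM^{3-\tau-\eps})$ for a constant $c>0$, which suffices for both the strong and the ordinary version.

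\textbf{Step 2 (part (ii): unique infinite component).} With $d\ge 2$ Lemma~\ref{lem:dense-geometric-locally-dense} applied to $G_M$ (using part (i)) yields that $\calH_\infty$ is infinite and that $G_M[\calH_\infty]$ is connected a.s. Since $G_M \subseteq G$, the same holds in $G$, and $\calH_\infty\subseteq \calV_M$ by construction. For $\tau\in(2,3)$ the underlying model is known to have a unique infinite component $\calC_\infty$ of positive density (cited in the paragraph preceding Theorem~\ref{thm:polynomial-upper}); hence $\calH_\infty\subseteq \calV(\calC_\infty)\cap \calV_M$.

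\textbf{Step 3 (parts (iii) and (iv)).} Part (iii) is a direct application of the locally-high-density statement in Lemma~\ref{lem:dense-geometric-locally-dense} to $G_M$ with $\eps_{\ref{lem:dense-geometric-locally-dense}} := \eps$, using the dense-geometric property from Step~1. For part~(iv), apply Lemma~\ref{lem:dense-geometric-linear-paths} to $G_M$ with parameters $\kappa' := \kappa/M^{3\mu}$ and $C' := C/M^{3\mu}$ in place of $\kappa$ and $C$; since $\kappa,C \ggs M \ggs \mpar$ and $R = M^{2/d}/\sqrt d$, the hypotheses $\kappa'\ggs R,d$ and $C'\ggs r,R,d$ are met. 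The lemma produces a path $\pi$ from $u$ to $v$ in $G_M$ with $|\pi| \le \kappa'|u-v|+C'$ and $\mathrm{dev}(\pi)\le \zeta|u-v|+C' \le \zeta|u-v|+C$; since every edge of $G_M$ costs at most $M^{3\mu}$, its total cost is at most $\kappa|u-v|+C$.

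\textbf{Main obstacle.} The only nontrivial point is keeping the parameter dependencies consistent across the four parts: the cutoffs ``$\ggs$'' must be compatible with the bootstrapping $\kappa\ggs M\ggs \eps,\mpar$, $C\ggs r\ggs M,\delta,\zeta$, and with Lemma~\ref{lem:dense-geometric-linear-paths}'s requirement that its $\kappa'$ be large relative to $R$ and $d$ \emph{after} dividing by $M^{3\mu}$. Once this bookkeeping is in place, the random-graph concentration giving the strong $(R,2,e^{-M^{3-\tau-\eps}})$-denseness---in particular the diameter-$2$ bound and the sharp $e^{-M^{3-\tau-\eps}}$ error exponent---is the most computational item, but it reduces to the classical fact that in $G(n,p_0)$ with $p_0 = \Omega(1)$ the probability of diameter exceeding $2$ is $\le n^2(1-p_0^2)^{n-2}$ and decays exponentially in $n = \Omega(M^{3-\tau-\eps})$.
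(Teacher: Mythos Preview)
Your proof is correct and follows essentially the same approach as the paper's: verify that $G_M$ is an $(R,\eps)$-dense (and strong $(R,2,e^{-M^{3-\tau-\eps}})$-dense) geometric graph via an Erd\H{o}s--R\'enyi domination argument on each box, then invoke Lemmas~\ref{lem:dense-geometric-locally-dense} and~\ref{lem:dense-geometric-linear-paths}, and finally convert the edge-count bound from Lemma~\ref{lem:dense-geometric-linear-paths} into a cost bound using the $M^{3\mu}$ cutoff on $\calE_M$. Your write-up is in fact slightly more explicit than the paper's about the cross-box edge probability (condition~\eqref{item:dense-3}) and about scaling $C$ alongside $\kappa$ by $M^{3\mu}$ in part~(iv), but these are minor bookkeeping differences rather than a different strategy.
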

\begin{proof}
    Let $\calS$ be a boxing scheme of side length $R=M^{2/d}/\sqrt{d}$.
       Using Definition \ref{def:dense-geometric}, we now prove that $G_M$ is a strong $(R,2,\eps_{\ref{def:dense-geometric}})$-dense geometric graph for $\eps_{\ref{def:dense-geometric}}:= e^{-M^{3-\tau-\eps}}$ with boxing scheme $\calS$. Note that this automatically implies that  $(R,\eps)$-dense since $M \ggs \eps$. 
    Definition~\ref{def:dense-geometric}(i) follows from the definitions of the models. For the other conditions we first lower-bound the expected number of vertices in a box $S\in \calS$. In both IGIRG and SFP, we use~\eqref{eq:power_law} to bound:
    	\begin{equation}\label{eq:W-in-IM}
\begin{aligned}    	\pr(W\in I_M) &= (1-F_W(M)) - (1- F_W(2M))
    	= \frac{\ell(M)}{M^{\tau-1}} - \frac{\ell(2M)}{(2M)^{\tau-1}} \\
    	&= M^{-(\tau-1)} \ell(M) \left(1- 2^{-(\tau-1)} \frac{\ell(2M)}{\ell(M)}\right)\ge M^{-(\tau-1)-\eps/4},
    	\end{aligned}
	\end{equation}
    where to obtain the last inequality we used that $\ell$ is slowly-varying, so $\ell(2M)/\ell(M)\to 1$, and that $M\ggs \eps$. In IGIRG, $\calV_M$ follows a homogeneous PPP with intensity $\pr(W\in I_M)$ on $\R^d$; since $S$ has side length $R$, $|\calV_M[S]|$ is therefore a Poisson random variable with mean $\lambda_M(\R^d):=R^d\pr(W\in I_M)$, which is at least $R^dM^{-(\tau-1)-\eps/4} = d^{-d/2}M^{3-\tau-\eps/4}$. Since $\tau<3$, the exponent is positive since we assumed $\eps\lls \mpar$ small, and so $\lambda_M \ge M^{3-\tau-\eps/2}$ since $M \ggs d, \eps$. Similarly, in SFP the number of vertices in $\calV_M[S]$ follows a binomial distribution with mean $\lambda_M(\Z^d):=|\Z^d\cap S|\cdot \pr(W\in I_M)\ge d^{-d/2}M^{3-\tau-\eps/4}/2$, (where the factor of $2$ suffices to account for boundary effects since $M$ is large), 
which is again at least $M^{3-\tau-\eps/2}$ for $M\ggs\eps$.    

    We now study the edge probabilities within the box $S$. Consider two vertices $u,v \in \calV_M[S]$.  Their distance is most the diameter of the box, $\sqrt{d}R\le M^{2/d}$, and $W_u,W_v\in I_M=[M, 2M]$ so $W_uW_v/|u-v|^d\ge 1$  holds. Recall that the edges have cost $(W_uW_v)^\mu L_{uv}\le 4M^{2\mu}L_{uv}$ in $1$-FPP, while we keep the edge in $G_M$ only if this edge cost is at most $M^{3\mu}$. Thus for all vertices $u,v\in \calV_M[S]$,
    \begin{align}
    \begin{split}\label{eq:M-mesh-probability}
       \mathbb P\big(uv \in \calE, \cost{uv}\le M^{3\mu}\mid u,v\in \calV_M[S] \big)  
       &\ge \underline{c}\left(1 \wedge \frac{W_uW_v}{|u-v|^d}\right)^{\alpha} \cdot F_L\big((W_uW_v)^{-\mu}M^{3\mu}\big) \\
       &\ge \underline{c}F_L(4^{-\mu}M^{\mu}) \ge \underline{c}/2,
    \end{split}
    \end{align}
    where the last inequality holds since $M \ggs \mpar$. Note that~\eqref{eq:M-mesh-probability} holds \emph{uniformly} over the weights in $I_M$ and locations of vertices in $S$, and is also valid when $\alpha=\infty$ or $\beta=\infty$. With the vertex set $\calV_M$ exposed, the presence of edges in $G_M[S]$ can therefore stochastically dominates an independent collection of Bernoulli$(\underline{c}/2)$ random variables. 
    
   Thus, the graph $G_M[S]$ dominates an Erd\H{o}s-R{\'e}nyi random graph with number of vertices distributed as $\mbox{Poisson}(\lambda_M(\R^d))$ or binomial with mean $\lambda_M(\Z^d)$ with $\lambda_M(\R^d), \lambda_M(\Z^d)$ both at least  $M^{3-\tau-\eps/2}$, and \emph{constant} connection probability $\underline{c}/2$. This Erd\H{o}s-R\'{e}nyi random graph is non-empty and connected with diameter two with probability at least $1-\exp(-\Theta(\lambda_M))\ge 1-\exp(-M^{3-\tau-\eps})=1-\eps_{\ref{def:dense-geometric}}$, see~\cite[Theorem 7.1]{frieze2023random}. Hence, $G_M$ satisfies conditions \eqref{item:dense-2}--\eqref{item:dense-4}
    of Definition~\ref{def:dense-geometric} with $\eps_{\ref{def:dense-geometric}}$ and is 
    thus a strong $(R,2,\eps_{\ref{def:dense-geometric}})$-dense geometric graph, which proves that \eqref{item:cor-1} of Corollary~\ref{cor:dense-subgraph} holds unconditionally. Moreover, since $R\ggs t$, the above argument still holds conditioned on any intersection $\calF$ of at most $t$ events of the form $z \in \calV$ or $z\in\calV_M$; from this point on in the proof, we always condition on $\calF$, and the only property of $G_M$ we use is that it is an $(R,\eps)$-dense geometric graph conditioned on $\calF$.

 When $d\ge 2$,  let $\omega^\star$ be an iid Bernoulli bond percolation  with retention probability $1-20d\eps$, and recall $\calH_\infty := \calH_\infty(G_M, \calS,\omega^\star)$  from \eqref{eq:h-infty} in Definition~\ref{def:dense-geometric-H}. Since $\eps \lls \sigma,d$ and $r\ggs \eps, \delta, R$, Corollary~\ref{cor:dense-subgraph}\eqref{item:cor-2}--\eqref{item:cor-3} are now immediate from Lemma~\ref{lem:dense-geometric-locally-dense}, except the uniqueness of $\calC_\infty$, which was proved earlier in \cite{deijfen2013scale} for SFP and in \cite{deprez2019scale} for IGIRG.  Recall the definition of $\calA_{\mathrm{linear}}(r,\kappa, C, \zeta,x)$ from Lemma~\ref{lem:dense-geometric-linear-paths}: in particular, that the \emph{graph} distance in $G_M$ between two vertices is $\kappa$ times the Euclidean distance. Since every edge of $G_M$ has cost at most $M^{3\mu}$, the \emph{cost} distance then is at most $\kappa M^{3\mu}$ between those vertices in $G_M$. Hence, $\calA_{\mathrm{linearcost}}(r,\kappa,C,\zeta,z) \supseteq \calA_{\mathrm{linear}}(r,\kappa/M^{3\mu},C,\zeta,z)$; thus Corollary~\ref{cor:dense-subgraph}\eqref{item:cor-4} follows from Lemma~\ref{lem:dense-geometric-linear-paths}.
\end{proof}
The above proof works whenever $3-\tau>0$, which guarantees that the expected degree of vertices in $G_M$, i.e.,  $M^{3-\tau +o(1)}$, grows with $M$. We chose the box-size $R=M^{2/d}$ so that it corresponds to the connectivity radius of a vertex in $\calV_M$. For $\tau>3$, the expected degree in $G_M$ tends to $0$ as $M$ increases and so $G_M$ has no infinite component for large $M$ (even if we removed the restriction on edge costs).
\subsection{Connecting to the infinite subgraph of the auxiliary graph}
Corollary~\ref{cor:dense-subgraph}\eqref{item:cor-4} guarantees linear-cost low-deviation paths within $\calH_\infty \subseteq \calC_\infty$ when $d\ge 2$. However, when connecting two arbitrary vertices, e.g. $0$ and $x$, in IGIRG/SFP, we cannot assume that they fall in $\calH_\infty$. We solve this issue using the following two claims.

\begin{claim}[The infinite cluster of IGIRG/SFP is dense]\label{claim:two-in-Cinfty}
    Let $G=(\calV,\calE)$ be an IGIRG or SFP satisfying assumptions \eqref{eq:power_law}--\eqref{eq:F_L-condition} with $d\ge 2$ and $\tau\in(2,3)$. Then there exists $\rho > 0$ such that for all $u,v \in \R^d$ (for IGIRG) or $u,v\in\Z^d$ (for SFP), we have $\pr(u,v \in \calC_\infty \mid u,v \in \calV) \ge \rho$.
\end{claim}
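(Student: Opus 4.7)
The plan is to reduce the joint statement to a uniform marginal lower bound together with the Harris--FKG inequality. I will first establish that $\pr(u \in \calC_\infty \mid u \in \calV) \ge \rho_0$ uniformly in $u$ for some $\rho_0 > 0$, and then deduce $\pr(u, v \in \calC_\infty \mid u, v \in \calV) \ge \rho_0^2 =: \rho$ using FKG combined with the monotonicity of $\{x \in \calC_\infty\}$ in additional vertices.

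For the marginal bound, translation invariance (on $\Z^d$ for SFP, and of the PPP combined with the Palm formula for IGIRG) shows that $\pr(u \in \calC_\infty \mid u \in \calV)$ does not depend on $u$. To show this common value is positive, I would invoke the infinite subgraph $\calH_\infty$ from Corollary~\ref{cor:dense-subgraph}: fixing $M$ sufficiently large and $\sigma,\eps$ sufficiently small so that Lemma~\ref{lem:dense-geometric-locally-dense} applies to the auxiliary graph $G_M$ (which is $(R,\eps)$-dense geometric by Corollary~\ref{cor:dense-subgraph}(i)), for the box $S \in \calS$ containing $u$ we have $\pr(\calV_M \cap S \subseteq \calH_\infty) \ge 1 - \sigma$. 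Combined with $\pr(W_u \in [M, 2M] \mid u \in \calV) > 0$ and with $\calH_\infty \subseteq \calV(\calC_\infty)$ from Corollary~\ref{cor:dense-subgraph}(ii), this yields $\rho_0 > 0$.

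For the FKG step, I would encode the whole random environment as independent coordinates: for each potential edge $xy$ an independent uniform $U_{xy} \in [0,1]$ (so that $xy$ is present iff $U_{xy} \le h(x-y, W_x, W_y)$), for each vertex $x$ its weight $W_x$, and (in the IGIRG case) the PPP itself. The event $\{x \in \calC_\infty\}$ is monotone in this environment: lowering some $U_{xy}$, raising some $W_z$ (using monotonicity of $h$ in $w_1, w_2$ from \eqref{eq:connection_prob}), or adding an extra PPP point can only enlarge the component of $x$. Two applications of Harris--FKG --- first conditionally on the weights for the edge indicators, then to the (still monotone) conditional probabilities as functionals of the weights and PPP --- give
\begin{align*}
\pr(u, v \in \calC_\infty \mid u, v \in \calV) \ge \pr(u \in \calC_\infty \mid u, v \in \calV) \cdot \pr(v \in \calC_\infty \mid u, v \in \calV),
\end{align*}
and the same monotonicity (adding an extra vertex cannot destroy connectivity) gives $\pr(u \in \calC_\infty \mid u, v \in \calV) \ge \pr(u \in \calC_\infty \mid u \in \calV) = \rho_0$, and symmetrically for $v$. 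Setting $\rho := \rho_0^2$ completes the argument.

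The main subtlety is justifying the two applications of FKG cleanly in the Palm-measure setting for IGIRG, where the vertex set is itself a random object; this requires phrasing the environment as a marked point process on a product space and verifying that $\{x \in \calC_\infty\}$ remains an increasing event in this enlarged sample space. This is standard, but it is the one step that needs to be spelled out with care.
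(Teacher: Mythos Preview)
Your proposal has a genuine gap in the FKG step. You claim that $\{x \in \calC_\infty\}$ is increasing in the vertex weights $W_z$, citing ``monotonicity of $h$ in $w_1, w_2$ from \eqref{eq:connection_prob}''. But Definition~\ref{def:girg} does \emph{not} assume $h$ is monotone in the weights; it only requires the two-sided bound
\[
\underline{c}\cdot\min\Big\{1,\tfrac{w_1w_2}{|x|^d}\Big\}^{\alpha}\le h(x,w_1,w_2)\le \overline{c}\cdot\min\Big\{1,\tfrac{w_1w_2}{|x|^d}\Big\}^{\alpha}.
\]
A function $h$ can satisfy these bounds while oscillating between them, so raising some $W_z$ may lower an edge probability. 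Consequently the conditional probability $\pr(u\in\calC_\infty\mid \calV,\calW)$ need not be monotone in $\calW$, and your second FKG application is not justified. The first (conditional-on-weights) FKG step only gives
\[
\pr(u,v\in\calC_\infty\mid u,v\in\calV)\ge \E\big[\pr(u\in\calC_\infty\mid \calV,\calW)\,\pr(v\in\calC_\infty\mid \calV,\calW)\ \big|\ u,v\in\calV\big],
\]
and without positive correlation of the two factors in $(\calV,\calW)$ you cannot pass to the product of expectations.

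The paper sidesteps this entirely: it treats $u$ and $v$ together from the start. Conditioning on $\{u,v\in\calV_M\}$ (which has probability at least $M^{-2\tau}$), Corollary~\ref{cor:dense-subgraph}\eqref{item:cor-1} was explicitly stated to hold under this conditioning (this is the role of the parameter $t$ with $t=2$), so Lemma~\ref{lem:dense-geometric-locally-dense} gives $\pr(u\notin\calH_\infty\mid u,v\in\calV_M)\le\sigma$ and likewise for $v$. A plain union bound then yields $\pr(u,v\in\calH_\infty\mid u,v\in\calV_M)\ge 1-2\sigma$, and multiplying by $\pr(u,v\in\calV_M\mid u,v\in\calV)$ gives $\rho=1/(2M^{2\tau})$. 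No correlation inequality or monotonicity of $h$ is needed.
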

\begin{claim}[Connecting to the Bernoulli-induced infinite subgraph]\label{claim:linear-start-vertices}
    Consider the setting of Corollary~\ref{cor:dense-subgraph} with $d\ge2$, recall $\calH_\infty \subseteq \calV_M$ from \eqref{eq:h-infty}, and let $\calC_\infty$ be the infinite component of $G$ containing $\calH_\infty$. Let $u,v \in \R^d$ (for IGIRG) or $\Z^d$ (for SFP). Then, whenever $r\ggs M, \delta$, 
    \begin{equation}\label{eq:path-contained}
    \begin{aligned}
    \pr\Big( \exists u^\star\in \calH_\infty\!\cap\! B_r(u);  &\ \exists \mbox{ a path }\pi^G_{u,u^\star}\subseteq \calE(G):\\
     &\calV(\pi^G_{u,u^\star}) \subseteq B_r(u),    \cost{\pi^G_{u,u^\star}} \le C \mid u,v \in \calC_\infty\Big)\ge 1-\delta.
     \end{aligned}
    \end{equation}
    \end{claim}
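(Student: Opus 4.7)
The plan is to first remove the conditioning via Claim~\ref{claim:two-in-Cinfty}---writing $\pr^{u,v}$ for the Palm measure conditioning on $u,v\in\calV$, the inequality $\pr^{u,v}(u,v\in\calC_\infty)\ge\rho>0$ reduces the task to showing that the failure event intersected with $\{u\in\calC_\infty\}$ has $\pr^{u,v}$-probability at most $\delta\rho$, which in particular removes the conditioning on $v\in\calC_\infty$ from the analysis---and then to populate $B_r(u)$ with many candidate vertices in $\calH_\infty$ close to $u$ and connect $u$ to one of them by a short cheap path. For the candidates, I would apply Corollary~\ref{cor:dense-subgraph}(iii) at $x=u$ with small choices of $\sigma,\delta_{\ref{cor:dense-subgraph}}$ (depending on $\delta,\rho,\mpar$) to obtain, with probability at least $1-\delta\rho/3$, that $\calH_\infty$ is $(1-\sigma)$-dense in every $(\log r)^2$-ball inside $B_r(u)$; combined with Poisson/binomial concentration for $|\calV_M\cap S_0|$, where $S_0\in\calS$ is the box of side $R=M^{2/d}/\sqrt d$ containing $u$, this yields $k:=M^{3-\tau-o(1)}$ candidates $w_1,\dots,w_k\in\calH_\infty\cap\calV_M\cap S_0$.

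To connect $u$ to one of these candidates I would split on the value of $W_u$. When $W_u\gtrsim M$, equation~\eqref{eq:connection_prob} gives connection probability $\ge\underline c$ between $u$ and each of the $k$ candidates (since $|u-w_i|\le R\sqrt d=M^{2/d}$ and $W_uW_{w_i}/|u-w_i|^d\gtrsim 1$), while Assumption~\ref{assu:L} gives $\pr(\cost{uw_i}\le C\mid uw_i\in\calE)\ge c_1(2^{-\mu}C/(W_uM)^\mu)^\beta$; a Chernoff bound across the $k$ independent candidates then produces a single cheap edge $uw_i$ with $\pr^{u,v}$-probability $1-o(1)$, while the upper tail $\pr^{u,v}(W_u\text{ too large})$ is controlled via the polynomial decay of $F_W$ (taking a threshold $W^{\star}$ with $\pr^{u,v}(W_u>W^{\star})\le\delta\rho/8$, which is permitted since $C\ggs M$). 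When $W_u\ll M$ the per-candidate direct edge probability $\underline c(W_u/M)^\alpha$ is too small for a one-step connection to succeed, so I would instead construct a multi-step path $u\to u_1\to\cdots\to u_j\to u^\star$ routed through an intermediate hub $u_j$ of weight $\gtrsim M^{\tau-2}$ found in a ball $B_{r_1}(u)$ at an intermediate scale $r_1=r_1(M)$; the existence of such a hub, reachable from $u$ by a short cheap path, follows from an exploration of $\calC_\infty\cap B_{r_1}(u)$ using that $u\in\calC_\infty$ guarantees at least one neighbour in the infinite cluster, and once reached the hub plays the role of the $W_u\gtrsim M$ case above. In both cases, since $r\ggs M \ge r_1$, the resulting path stays inside $B_r(u)$, its cost is at most $C$, and the one-vertex target $u^\star=w_i$ lies in $\calH_\infty\cap B_r(u)$ as required.

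I expect the main obstacle to lie in the regime $W_u\ll M$: not only is the per-candidate edge probability tiny, but in fact the expected number of direct edges from $u$ to $\calV_M$ anywhere in $\R^d$ is of order $M^{2-\tau}=o(1)$, which forces a genuine multi-step construction through hub vertices rather than a simple Chernoff argument. The delicate balance between the polynomial tail of $F_W$, the regularly-varying tail of $F_L$ near $0$, the abundance of candidates $k\sim M^{3-\tau}$, and the requirement that every intermediate vertex in the constructed path remains inside $B_r(u)$ while exploiting only the sole conditioning $u\in\calC_\infty$ is where the bulk of the technical effort is concentrated; the generosity $C\ggs r\ggs M$ of the budgets should in the end be enough to absorb all error terms.
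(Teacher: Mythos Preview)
Your approach is substantially harder than necessary, and the hard case you yourself identify is genuinely gappy. The paper's proof is a two-line soft argument that you are missing: fix a small radius $r'$ (depending only on $M,\delta$) such that $\calH_\infty\cap B_{r'}(u)\ne\emptyset$ with conditional probability at least $1-\delta/2$, which follows from Corollary~\ref{cor:dense-subgraph}(iii). Conditioned on $u\in\calC_\infty$, since $\calH_\infty\subseteq\calC_\infty$, a path in $G$ from $u$ to each of the finitely many vertices in $\calH_\infty\cap B_{r'}(u)$ exists \emph{automatically}; the maximum containment radius and maximum cost of any fixed selection of such paths are therefore a.s.\ finite random variables. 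Because the setting of Corollary~\ref{cor:dense-subgraph} grants $C\ggs r\ggs M,\delta$, one simply takes $r,C$ large enough that these random variables exceed $r,C$ with conditional probability at most $\delta/2$. No explicit path construction is needed.

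Your explicit construction, by contrast, has real problems in the regime $W_u\ll M$. First, a hub of weight $\sim M^{\tau-2}$ is of \emph{smaller} weight than $M$ (since $\tau-2\in(0,1)$), so it still cannot connect with constant probability to a weight-$M$ candidate at distance $\sim M^{2/d}$: \eqref{eq:connection_prob} would require $W_{\text{hub}}\cdot M\gtrsim M^{2}$, i.e.\ $W_{\text{hub}}\gtrsim M$. Second, and more fundamentally, the sentence ``the existence of such a hub, reachable from $u$ by a short cheap path, follows from an exploration of $\calC_\infty\cap B_{r_1}(u)$ using that $u\in\calC_\infty$ guarantees at least one neighbour in the infinite cluster'' is circular. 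The event $u\in\calC_\infty$ gives you a single neighbour in $\calC_\infty$, but no control on that neighbour's weight, location, or edge-cost, nor on those of subsequent steps; there is no mechanism preventing the cluster from leaving $B_{r_1}(u)$ long before hitting a vertex of weight $\gtrsim M$, and bounding the cost and spatial extent of the path from $u$ to such a vertex is exactly the statement you are trying to prove. The final appeal to ``the generosity $C\ggs r\ggs M$'' is in fact the whole point of the soft argument and makes the rest of your construction unnecessary.
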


\begin{proof}[Proof of Claim \ref{claim:two-in-Cinfty}]
    Fix two locations $u,v\in \R^d$ or $u,v \in \Z^d$, and let $M\ggs \mpar$, and recall from the calculation in \eqref{eq:W-in-IM}  that
       \[
        \pr(u,v \in \calV_M \mid u,v\in\calV) =\pr(W\in I_M)^2 \ge M^{-2(\tau-1)-\eps/2}\ge  M^{-2\tau}.
    \]
    Further, by Corollary 3.9\eqref{item:cor-1}, $G_M$ is an $(R, \eps)$-dense geometric graph even when conditioned on the event $\calF_M=\{u,v\in \calV_M\}$. This means that the coupling in Lemma \ref{lem:bond-percolation-coupling} remains valid under the conditioning, and the high-density result $\pr(S\cap\calV \subseteq \calH_\infty\mid u,v\in \calV_M) \ge 1-\sigma$ in Lemma \ref{lem:dense-geometric-locally-dense} holds also conditioned on $u,v\in \calV_M$. So, a union bound  yields that 
    \[\begin{aligned}
     \pr(u, v\in \calH_\infty\mid u,v \in \calV_M) \ge 1-\pr(u\notin \calH_\infty \mid u,v \in \calV_M) - \pr(v \notin \calH_\infty \mid u,v \in \calV_M)\\
     \ge 1-2\sigma\ge 1/2,
     \end{aligned}\]
     whenever $\sigma\le 1/4$.
   Hence 
    \[
     \pr(u,v \in \calH_\infty \mid u,v\in \calV) = \pr(u,v \in \calH_\infty \mid u,v\in\calV_M) \cdot  \pr(u,v \in \calV_M \mid u,v\in \calV)\ge 1/(2M^{2\tau}).
    \]
    Since $\calH_\infty \subseteq V(\calC_\infty)$ by Corollary~\ref{cor:dense-subgraph}\eqref{item:cor-2}, the result follows with $\rho = 1/(2M^{2\tau})$.
\end{proof}
\begin{proof}[Proof of Claim \ref{claim:linear-start-vertices}]
    For $r' >0$ let $\calB^\star(r',M,u):= \calH_\infty\cap B_{r'}(u) $. 
    By Corollary~\ref{cor:dense-subgraph}\eqref{item:cor-2}--\eqref{item:cor-3}, applied with $\delta/2$ instead of $\delta$, $\calH_\infty$ has a positive density in $\calV_M$ around $u$. Hence,     for $r'\ggs M,\delta$, 
    \begin{align}\label{eq:start-neighbourhood-nonempty}
        \pr(\calB^\star(r',M,u) = \emptyset \mid u,v \in \calC_\infty) \le \delta/2.
    \end{align}
    Fix such $r'$.
   Because $\calH_\infty\subseteq \calC_\infty$ connected, paths exists within $\calC_\infty$ to vertices in $\calH_\infty$. In particular, 
 conditioned on $u,v\in\calC_\infty$, fix any procedure to uniquely select a path $\pi_{uu^\star}^G$ from $u$ to each $u^\star\in \calB^\star(r',M,u)$ (for example, take the paths with lowest deviation, breaking ties by random coin-flips). Then the random variables
    \[
    \begin{aligned}
    X &:= \inf\{r>0 \mid \forall u^\star \in \calB^\star(r',M,u): V(\pi_{uu^\star}^G) \subseteq B_r(u)\},\\ 
    Y &:= \inf\{C >0 \mid \forall u^\star \in \calB^\star(r',M,u): \cost{\pi_{uu^\star}^G} \le C\},
    \end{aligned}
    \]
    are a.s.\ finite conditioned on $u,v\in \calC_\infty$.
    On choosing $r$ and $C$ suitably large, we have 
    \begin{align}\label{eq:start-neighbourhood-maxcost}
    \pr(X > r \text{ or } Y > C \mid u,v\in\calC_\infty) \le \delta/2,
    \end{align}
    and the result follows from a union bound over~\eqref{eq:start-neighbourhood-nonempty} and~\eqref{eq:start-neighbourhood-maxcost}.
 \end{proof}

The next corollary combines the previous claims and lemmas and constructs a linear-cost low deviation path between two vertices in the unique infinite component of SFP/IGIRG. 
\begin{corollary}[Linear costs in 1-FPP on IGIRG/SFP]\label{lem:linear-regime-join-two}
    Consider $1$-FPP on IGIRG or SFP of Definition \ref{def:girg} satisfying the assumptions given in \eqref{eq:power_law}--\eqref{eq:F_L-condition} with $0\in\calV$. Assume that the dimension $d\ge 2$, and $\alpha>2, \tau\in(2,3)$, $\mu>\mu_{\mathrm{pol}}$. Let $\kappa,r > 0$, and let $\delta,\zeta \in (0,1)$. Suppose $r\ggs \delta, \zeta, \mpar$, and that $\kappa\ggs \mpar$. Let $\calC_\infty$ be the unique infinite component guaranteed by Corollary \ref{cor:dense-subgraph}\eqref{item:cor-2}. Let $u,v \in \R^d$ for IGIRG or $\Z^d$ for SFP with $|u-v| \ge r$. Let $\calA_{\mathrm{linearcost}}(u,v, \kappa, \zeta)$ be the event that there is a path joining $u$ and $v$ in $G$ with cost at most $\kappa|u-v|$ and deviation at most $\zeta|u-v|$. Then
    \[
        \pr(\calA_{\mathrm{linearcost}}(u,v,\kappa,\zeta)\mid u,v\in\calC_\infty) \ge 1-\delta.
    \]
\end{corollary}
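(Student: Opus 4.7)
The plan is to concatenate three paths: a bounded-cost path from $u$ to a vertex $u^\star \in \calH_\infty \cap B_{r'}(u)$, a linear-cost low-deviation path in $\calH_\infty$ from $u^\star$ to some $v^\star \in \calH_\infty \cap B_{r'}(v)$, and a bounded-cost path from $v^\star$ back to $v$. Here $r'$ is a constant chosen large compared to $M$ (which in turn is large compared to the model parameters) but fixed independently of $|u-v|$, so that the ``entry'' and ``exit'' costs are absorbable constants once $|u-v| \ge r$ is sufficiently large.

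More concretely, first fix $M \ggs \mpar$ and $\sigma, \eps$ so that Corollary~\ref{cor:dense-subgraph} and Claims~\ref{claim:two-in-Cinfty}--\ref{claim:linear-start-vertices} apply with the stated parameter requirements. Apply Claim~\ref{claim:linear-start-vertices} twice, once to $u$ and once to $v$ (with radius $r'$ and an auxiliary cost bound $C_0$), to obtain with probability at least $1-2\delta/\rho$ conditional on $u,v \in \calC_\infty$ vertices $u^\star \in \calH_\infty \cap B_{r'}(u)$ and $v^\star \in \calH_\infty \cap B_{r'}(v)$, together with paths $\pi_{u u^\star}$ and $\pi_{v v^\star}$ of cost at most $C_0$ whose vertex sets lie in $B_{r'}(u)$ and $B_{r'}(v)$ respectively. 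Next, apply Corollary~\ref{cor:dense-subgraph}\eqref{item:cor-4} centred at $u$ with parameters $(r', \kappa_1, C_1, \zeta_1)$ for suitable $\kappa_1 < \kappa$ and $\zeta_1 < \zeta$; this produces with probability at least $1-\delta/\rho$ (conditional on $u \in \calV$, and hence also conditional on $u,v \in \calC_\infty$ after the conditioning adjustment described below) the event $\calA_{\mathrm{linearcost}}(r',\kappa_1,C_1,\zeta_1,u)$. Since $u^\star \in \calB_{r'}(u) \cap \calH_\infty$ and $v^\star \in \calH_\infty$, this event furnishes a path $\pi^\star$ from $u^\star$ to $v^\star$ of cost at most $\kappa_1|u^\star - v^\star| + C_1$ and deviation from $S_{u^\star v^\star}$ at most $\zeta_1|u^\star - v^\star| + C_1$.

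Concatenating $\pi_{u u^\star}$, $\pi^\star$ and the reverse of $\pi_{v v^\star}$ yields a walk $\pi$ from $u$ to $v$; by the triangle inequality $|u^\star - v^\star| \le |u-v| + 2r'$, so the total cost is at most $2C_0 + \kappa_1(|u-v|+2r') + C_1$, which is at most $\kappa |u-v|$ provided $r \ggs r', C_0, C_1$ and $\kappa > \kappa_1$. For the deviation, the initial and terminal segments lie in balls of radius $r'$ around $u$ and $v$, hence deviate at most $r'$ from $S_{uv}$; moreover any point on $S_{u^\star v^\star}$ is within $r'$ of $S_{uv}$ (by convexity and $|u-u^\star|, |v-v^\star| \le r'$), so the middle segment deviates from $S_{uv}$ by at most $\zeta_1|u-v| + C_1 + 3r'$, which is at most $\zeta|u-v|$ for $\zeta > \zeta_1$ and $r$ large enough.

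The mild technical obstacle is that Corollary~\ref{cor:dense-subgraph}\eqref{item:cor-4} conditions on the Palm event $\calF = \{u,v \in \calV\}$, whereas we wish to condition on the stronger event $\{u,v \in \calC_\infty\}$. However, by Claim~\ref{claim:two-in-Cinfty} we have $\pr(u,v \in \calC_\infty \mid \calF) \ge \rho$ for some $\rho = \rho(\mpar) > 0$, so any event of $\calF$-conditional probability at least $1 - \delta \rho/4$ has conditional probability at least $1-\delta/4$ given $\{u,v\in\calC_\infty\}$. Applying the three cited results with their error parameters rescaled by $\rho$, a union bound then yields the desired $1-\delta$ lower bound, completing the proof.
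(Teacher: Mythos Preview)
Your proposal is correct and follows essentially the same three-segment concatenation as the paper's proof: bounded entry/exit paths via Claim~\ref{claim:linear-start-vertices} and a linear-cost middle path in $\calH_\infty$ via Corollary~\ref{cor:dense-subgraph}\eqref{item:cor-4}, with the conditioning mismatch resolved through Claim~\ref{claim:two-in-Cinfty}. One small cleanup: since Claim~\ref{claim:linear-start-vertices} is already stated conditional on $u,v\in\calC_\infty$, the $\rho$-rescaling is only needed for the middle event from Corollary~\ref{cor:dense-subgraph}\eqref{item:cor-4}, and your intermediate expressions ``$1-2\delta/\rho$'' and ``$1-\delta/\rho$'' have the $\rho$ on the wrong side (you want to apply the result with error $\rho\delta$ so that division by $\rho$ yields error $\delta$).
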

\begin{proof}
    Let $\rho\in (0,1)$ be as in Claim~\ref{claim:two-in-Cinfty}. Let $r_{\ref{cor:dense-subgraph}}$,  $C_{\ref{cor:dense-subgraph}}>0$ be as in Corollary~\ref{cor:dense-subgraph}\eqref{item:cor-4} applied with $\kappa_{\ref{cor:dense-subgraph}} = \kappa/2$, $\delta_{\ref{cor:dense-subgraph}} = \rho\delta/3$, $\zeta_{\ref{cor:dense-subgraph}} = \zeta/2$ and $t_{\ref{cor:dense-subgraph}} = 2$ (and any suitable values of $\eps$, $M$, $\sigma$). Let $r_{\ref{claim:linear-start-vertices}}$ and $C_{\ref{claim:linear-start-vertices}}$ be as in Claim~\ref{claim:linear-start-vertices} applied with $\delta_{\ref{claim:linear-start-vertices}} = \delta/3$.
In Corollary \ref{cor:dense-subgraph}\eqref{item:cor-4}, the requirement $r_{\ref{cor:dense-subgraph}}\ggs M, \zeta, \delta$ is assumed and  in Claim \ref{claim:linear-start-vertices} $r_{\ref{claim:linear-start-vertices}}\ggs M$,  thus we may increase the first value to assume $r_{\ref{cor:dense-subgraph}}\ge r_{\ref{claim:linear-start-vertices}}$.
    We may assume that $r\ggs r_{\ref{cor:dense-subgraph}}, r_{\ref{claim:linear-start-vertices}}, C_{\ref{cor:dense-subgraph}}, C_{\ref{claim:linear-start-vertices}}$.
    We define events as follows.
    \begin{enumerate}[(a)]
        \setlength\itemsep{-0.3em}
        \item Let $\calA_1$ be the event that there is a path $\pi^1$ from $u$ to some vertex $u^\star \in \calH_\infty \cap B_{r_{\ref{claim:linear-start-vertices}}}(u)$ of cost at most $C_{\ref{claim:linear-start-vertices}}$ and with $\calV(\pi^1) \subseteq B_{r_{\ref{claim:linear-start-vertices}}}(u)$.
        \item Let $\calA_2$ be the event that \emph{every} vertex $u_1 \in \calH_\infty\cap B_{r_{\ref{cor:dense-subgraph}}}(u) $ is joined to every vertex $u_2' \in \calH_\infty$ by a path $\pi^2_{u_1,u_2}$ of cost at most $(\kappa/2)|u_1-u_2| +   C_{\ref{cor:dense-subgraph}}$ and deviation at most $(\zeta/2)|u_1-u_2|+C_{\ref{cor:dense-subgraph}}$.
        \item Let $\calA_3$ be the event that there is a path $\pi^3$ from some vertex $v^\star$ in $\calH_\infty \cap B_{r_{\ref{claim:linear-start-vertices}}}(v)$ to $v$ of cost at most $C_{\ref{claim:linear-start-vertices}}$ and with $\calV(\pi^3) \subseteq B_{r_{\ref{claim:linear-start-vertices}}}(v)$.
    \end{enumerate}
    Observe that if $\calA_1\cap\calA_2\cap\calA_3$ occurs, then since $|u-v|\ge r$ is large,    the path $\pi^1\pi^2_{u^\star,v^\star}\pi^3$ has cost at most $\kappa|u-v|/2 + C_{\ref{cor:dense-subgraph}} + 2C_{\ref{claim:linear-start-vertices}} \le \kappa |u-v|$ and deviation at most $\zeta|u-v|/2 + C_{\ref{cor:dense-subgraph}} + 2r_{\ref{claim:linear-start-vertices}} \le \zeta |u-v|$. We must therefore prove
    \begin{equation}\label{eq:linear-regime-proof-goal}
        \pr(\calA_1\cap\calA_2\cap\calA_3\mid u,v\in \calC_\infty) \ge 1-\delta.
    \end{equation}
    By our choice of $r_{\ref{claim:linear-start-vertices}}$ and $C_{\ref{claim:linear-start-vertices}}$, Claim~\ref{claim:linear-start-vertices} implies that
    \[
        \pr(\neg\calA_1 \mid u,v \in \calC_\infty) \le \delta/3,\qquad \pr(\neg\calA_3 \mid u,v \in \calC_\infty) \le \delta/3.
    \]
    Similarly, by Corollary~\ref{cor:dense-subgraph}\eqref{item:cor-4}  we have $\pr(\neg\calA_2 \mid u,v\in\calV) \le \rho\delta/3$. Thus by Lemma~\ref{claim:two-in-Cinfty},
    \[
        \pr(\neg\calA_2 \mid u,v \in \calC_\infty) \le \frac{\pr(\neg\calA_{2} \mid u,v\in\calV)}{\pr(u,v\in\calC_\infty\mid u,v\in \calV)} \le \frac{\rho\delta/3}{\rho} = \delta/3.
    \]
    Applying a union bound, \eqref{eq:linear-regime-proof-goal} follows as required.
\end{proof}

\begin{proof}[Proof of Theorems~\ref{thm:linear_regime} and \ref{thm:threshold_regimes}]
We have already proven the lower bounds: namely the lower bound of Theorem~\ref{thm:linear_regime} follows directly from~\eqref{eq:linear-lower} in Theorem~\ref{thm:linear_polynomial_lower_bound}, with proof on page \pageref{proof:linear_polynomial_lower_bound}, and the proof of the lower bound in Theorem~\ref{thm:threshold_regimes} can be found on page~\pageref{proof:threshhold_regimes_lower}. The upper bounds of both theorems follow from Corollary~\ref{lem:linear-regime-join-two}. 
\end{proof}

It remains to prove Theorem~\ref{thm:finite_graph}, which states that all results also hold in the finite GIRG model if $u_n,v_n$ are chosen uniformly at random from the largest component $\calC_{\max}^{(n)}$.

\begin{proof}[Proof of Theorem~\ref{thm:finite_graph}]
    Let $\kappa_1 > 0$ be small enough to take the role of $\kappa$ in \eqref{eq:linear-lower} in Theorem~\ref{thm:linear_polynomial_lower_bound}, and let $\kappa_2>0$ be large enough to take the role of $\kappa$ in Corollary~\ref{lem:linear-regime-join-two}.
    Let $G_n = (\calV_n,\calE_n)$ be a GIRG satisfying the assumptions of the theorem statement, let $\calC_{\max}^{(n)}$ be the largest component of $G_n$, and let $u_n$ and $v_n$ be vertices chosen independently and uniformly at random from $\calC_{\max}^{(n)}$. For all $u,v\in \R^d$ and all graphs $H$, let $\calA(H,u,v)$ be the event that $u$ and $v$ have cost-distance between $\kappa_1 |u-v|$ and $\kappa_2 |u-v|$ in the (sub)graph $H$. Then it suffices to prove that for all $\delta \in (0,1)$, whenever $n\ggs \delta$,  $\pr(\neg\calA(G_n,u_n,v_n)) \le \delta$ holds.
    
    Let $x_n,y_n \in \calV_n$ be chosen independently and uniformly at random from $\calV_n$. Whenever $n\ggs \delta$;  it is known~\cite[Theorem~3.11]{komjathy2020stopping} that $\calC_{\max}^{(n)}$ has constant density whp, so with probability at least $1/2$, we have $|\calC_{\max}^{(n)}| \ge \delta^{1/4} |\calV_n|$. Thus $\pr(x_n,y_n \in \calC_{\max}^{(n)}) \ge \sqrt{\delta}/2$, and so 
    \begin{align*}
        \pr(\neg\calA(G_n,u_n,v_n)) 
        &= \pr(\neg\calA(G_n,x_n,y_n) \mid x_n,y_n \in \calC_{\max}^{(n)}) \\
        &\le 2\pr(\neg\calA(G_n,x_n,y_n) \mbox{ and }x_n,y_n\in\calC_{\max}^{(n)})/\sqrt{\delta}.
    \end{align*}
    Recall that for all $n>0$, $Q_n := [-n^{1/d}/2, n^{1/d}/2]^d$. Let $x_n',y_n' \in Q_n$ be random points chosen independently and uniformly at random from the Lebesgue measure in $Q_n$. Let $\calV_n'$ be a Poisson point process of unit intensity conditioned on $x_n',y_n'\in\calV_n'$, i.e.\ a Palm process. It is known that the total variation distance of $(\calV', x_n', y_n')$ from $(\calV, x_n, y_n)$ converges to zero as $n\to\infty$, and in particular is at most 
 $\delta^{3/2}/12$  when $n$ is sufficiently large. Thus on taking $G_n'$ to be a GIRG with vertex set $\calV_n'$, we have
    \begin{align*}
        \pr(\neg\calA(G_n,u_n,v_n)) \le (2/\sqrt{\delta})(\delta^{3/2}/12) + 2\pr(\neg\calA(G_n',x_n',y_n') \mbox{ and }x_n',y_n'\in\calC_{\max}^{(n)})/\sqrt{\delta}.
    \end{align*}
    We may couple $G_n'$ to an IGIRG $G^+$ in such a way that $G_n = G^+[Q_n]$. Let $\calC_\infty$ be the infinite component of $G^+$. Further, the giant component of $G_n'$ is part of $\calC_\infty$ with probability tending to $1$ as $n$ tends to infinity, see \cite{komjathy2020explosion}. So for $n$ large enough, $\pr(\calC_{\max}^{(n)} \nsubseteq \calC_\infty) \le \delta^{3/2}/12$, and hence
    \[
        \pr(\neg\calA(G_n,u_n,v_n)) \le \delta/3 + 2\pr(\neg\calA(G_n',x_n',y_n') \mbox{ and }x_n',y_n'\in\calC_\infty)/\sqrt{\delta}.
    \]
    Let $r$ be large enough for Corollary~\ref{lem:linear-regime-join-two} and Theorem~\ref{thm:linear_polynomial_lower_bound} to apply to $\delta^{3/2}/12$, taking $\zeta = \delta^2$ in Corollary~\ref{lem:linear-regime-join-two}. Let $X$ be the set of pairs $(x,y) \in Q_{(1-d\delta^2)n}$ such that $|x-y| \ge r$, and let $\calX_n$ be the event that $(x_n',y_n') \in X$. Observe that $\pr(\neg\calX_n) \le \delta^{3/2}/12$ if $\delta\lls d$ and $n\ggs r$.
    Thus
    \begin{align*}
        \pr(\neg\calA(G_n,u_n,v_n)) &\le \delta/2 + 2\pr(\neg\calA(G_n',x_n',y_n') \mbox{ and }x_n',y_n'\in\calC_\infty\mid \calX_n\mbox{ and }x_n',y_n' \in \calV_n)/\sqrt\delta\\
        &\le \delta/2 + 2\max_{x,y\in X}\pr(\neg\calA(G_n',x,y) \mid x_n=x,y_n=y,x,y\in\calC_\infty)/\sqrt\delta.
    \end{align*}
    By Theorem~\ref{thm:linear_polynomial_lower_bound}, the lower cost bound in the event $\calA(G_n',x,y)$ fails with probability at most $\delta^{3/2}/8$. Moreover, when $x,y \in Q_{(1-d\delta^2)n}$, the upper cost bound in $\calA(G_n',x,y)$ occurs whenever $x$ and $y$ are joined by a path in $\calC_\infty$ with cost at most $\kappa |x-y|$ and deviation at most $\delta^2 |x-y| \le \delta^2 nd$; thus by Corollary~\ref{lem:linear-regime-join-two}, the upper bound fails with probability at most $\delta^{3/2}/8$. By a union bound, it follows that $\pr(\neg\calA(G_n,u_n,v_n) \le \delta$ as required.
\end{proof}

\appendix
\section{Appendix}

Here we prove Lemmas~\ref{lem:bond-percolation-density},~\ref{lem:bond-percolation-linear} and~\ref{lem:dense-geometric-locally-dense}. For Lemma~\ref{lem:bond-percolation-density} we use the following theorem, which is a simplified version of~\cite[Theorem~1.1]{DP-percolation}.

\begin{theorem}\label{thm:DP-percolation}
    For all $d \in \N$ with $d \ge 2$ and all $\sigma \in (0,1/2)$ there exists $p_0 = p_0(\sigma,d) \in (0,1)$ such that the following holds. Let $\omega^\star$ be an iid Bernoulli bond percolation on $\mathbb{Z}^d$ with edge-retention probability $p \ge p_0$. For $S \subseteq \mathbb{R}^d$, let 
    \begin{equation}\label{eq:A-dense-perc}
    \calA_\mathrm{dense}(S,\sigma):=\{ \mbox{$S$ contains a unique cluster $\calC$ with $|\calC| \ge (1-\sigma)\vol(S)$}\}.
    \end{equation} 
    Let $S_n = [-n/2, n/2]^d$. Then
    \begin{align*}
        \limsup_{n\to\infty} \frac{1}{n^{d-1}}\log\pr(\neg\calA_\mathrm{dense}(S_n,\sigma)) < 0.
    \end{align*}
\end{theorem}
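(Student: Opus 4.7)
The plan is to prove this via Pisztora's renormalisation combined with a Peierls-type contour argument giving surface-order large deviations for site percolation at high density. The starting observation is that for $p$ close to $1$, closed edges are sparse, and the only way for $S_n$ to fail $\calA_\mathrm{dense}(S_n,\sigma)$ is that a dense collection of ``defects'' forms a cut through $S_n$, which costs surface area.

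First I would introduce a renormalisation scale $K=K(\sigma,d)$ and tile $S_n$ by boxes $B_z$ of side $K$ indexed by $z\in\Z^d$. Declare a box $B_z$ \emph{good} if every edge in the $3K$-neighbourhood of $B_z$ is open; then good boxes in $\Z^d$ carry a $3$-dependent percolation and by standard edge-counting $\pr(B_z\text{ good})\ge p^{c(d)K^d}$, which tends to $1$ as $p\to 1$. By Liggett--Schonmann--Stacey, this finite-range dependent process stochastically dominates an i.i.d.\ Bernoulli site percolation $\omega_{\mathrm{site}}$ on $\Z^d$ with density $q=q(p)$, where $q(p)\to 1$ as $p\to 1$. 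Two crucial deterministic facts: (a) adjacent good boxes lie in the same open cluster of the original percolation (because all connecting edges are open), so a cluster $\calC$ of good boxes in $\omega_{\mathrm{site}}$ gives rise to a genuine cluster in $S_n$ containing at least $K^d|\calC|$ vertices; (b) the number of vertices in $S_n$ \emph{not} inside any good box is at most $K^d$ times the number of bad coarse-sites.

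The heart of the argument is then a Peierls-type surface-order bound for $\omega_{\mathrm{site}}$ at density $q$ close to $1$. The idea: if the largest good cluster in the coarse-grained box $B_n^\mathrm{coarse}=[-n/(2K),n/(2K)]^d\cap\Z^d$ occupies less than $(1-\sigma/(2K^d))$-fraction of sites, then by a discrete isoperimetric inequality there must exist a $^*$-connected set of bad (closed) coarse-sites of cardinality at least $c_1(\sigma,d)\,(n/K)^{d-1}$ that either separates two large regions or surrounds a substantial hole. The number of $^*$-connected subsets of $\Z^d$ of size $m$ containing a given site is at most $C(d)^m$, and each is entirely closed with probability $(1-q)^m$. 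Choosing $p$ (and hence $q$) close enough to $1$ so that $C(d)(1-q)<e^{-1}$, summing over all base points in $B_n^\mathrm{coarse}$ and all surface sizes $m\ge c_1(\sigma,d)(n/K)^{d-1}$ gives total probability at most $(n/K)^d\,e^{-c_2(\sigma,d)(n/K)^{d-1}}$, which is $\exp(-\Theta(n^{d-1}))$.

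Finally I would combine the two ingredients: on the complement of the above surface-order bad event, $B_n^\mathrm{coarse}$ contains a unique macroscopic cluster of good boxes whose associated cluster in $S_n$ has at least $(1-\sigma)\vol(S_n)$ vertices, and any competing cluster of good boxes is forbidden by uniqueness of the macroscopic coarse cluster. The main obstacle is handling uniqueness of the giant cluster (not just its size): a second large cluster of good boxes disjoint from the main one would also need to be surrounded by a cut of closed bad coarse-sites, so the same Peierls counting applies once we argue that two competing giant good clusters in $B_n^\mathrm{coarse}$ force a separating cut of size $\Omega(n^{d-1})$. This is where the choice of $K$ and the precise form of ``good'' may need strengthening (e.g.\ requiring a crossing cluster plus absence of second large sub-cluster in each $B_z$, in the style of Pisztora's original formulation) to ensure that uniqueness on the coarse lattice transfers to uniqueness on $\Z^d$.
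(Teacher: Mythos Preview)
The paper does not give a self-contained proof of this theorem. It simply quotes it as a simplified version of Deuschel--Pisztora's Theorem~1.1 (stated there for site percolation) and adds a one-line reduction: since $\calA_{\mathrm{dense}}(S,\sigma)$ is an increasing event, the bond-percolation version follows from the site-percolation version by the standard domination in which a site is declared open if and only if all incident bonds are open. So the paper's ``proof'' is: cite DP, then monotonicity.

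Your proposal, by contrast, attempts to reconstruct the Deuschel--Pisztora argument from scratch via coarse-graining, Liggett--Schonmann--Stacey domination, and a Peierls count. This is much more than the paper does, and if carried out rigorously would amount to re-deriving the cited result rather than using it. The overall architecture you describe is indeed the standard route to surface-order large deviations in high-density percolation.

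That said, there is a genuine gap in your Peierls step. The deterministic implication you state --- that if the largest good coarse-cluster occupies less than a $(1-\sigma')$-fraction of the box then there must exist a single $*$-connected set of bad sites of size $\Omega((n/K)^{d-1})$ --- is not true as written. A box could have many scattered bad sites (total mass $\gg \sigma' (n/K)^d$) with no single large $*$-connected bad component, and then the largest good cluster is small without any large contour existing. You need to first split off the event ``too many bad coarse-sites'' (which has volume-order, not surface-order, decay and is harmless), and only on the complementary event argue about a separating surface. Even then, the passage from ``two macroscopic good components'' to ``a single $*$-connected bad surface of size $\Omega(n^{d-1})$'' requires care in $d\ge 3$: the outer boundary of a cluster need not be $*$-connected globally, and this is precisely where the original Deuschel--Pisztora proof does nontrivial work (using that each \emph{component} of the exterior boundary of a finite cluster in $\Z^d$ is $*$-connected, together with an isoperimetric bound to guarantee at least one such component is large). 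Your final paragraph correctly flags that uniqueness needs a stronger notion of ``good'' box, but the size-of-cut issue is already present at the existence step.
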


The corresponding~\cite[Theorem~1.1]{DP-percolation} is stated for site percolation. Since the event $\calA_\mathrm{dense}(S,\delta)$ is monotone; the same result for bond percolation also holds via the standard domination in which we retain a vertex in a coupled site percolation process if and only if we retain all its edges in bond percolation. We now use Theorem~\ref{thm:DP-percolation} to prove Lemma~\ref{lem:bond-percolation-density}, showing that the infinite cluster $\omega$ of a high-density Bernoulli bond-percolation has locally high density, see also \eqref{eq:bond-percolation-density}.
\begin{proof}[Proof of Lemma \ref{lem:bond-percolation-density}]\label{proof:lemma3.2}
The statement that $\pr(0\in \calC_\infty^\star)\ge 1-\sigma$ as the edge-retention probability $1-\eps$ tends to $1$ follows directly from the continuity of the the theta-function in the supercritical regime (in particular, near $1$), see \cite{grimmett1999percolation}. We move on to showing \eqref{eq:bond-percolation-density}. By translation invariance, it is enough to show the statement for $x=0\in \Z^d$. Recall $\calA_\mathrm{dense}(S_n, \sigma)$ from \eqref{eq:A-dense-perc} and that $S_n = [-n/2,n/2]^d$. By Theorem~\ref{thm:DP-percolation}, taking $p_0:=1-\eps$ and $\eta\lls \sigma$, since the dimension is $d\ge2$,
    \begin{equation} \label{eq:percolation-density-0}
        \forall n \ge n_0\colon \pr(\calA_\mathrm{dense}(S_n,\sigma/2)) \ge 1 - \exp(-\eta n^{d-1}) \ge 1 - \exp(-\eta n).
    \end{equation}
    By the Borel-Cantelli lemma, almost surely $\calA_{\mathrm{dense}}(S_n,\sigma/2)$ occurs for all but finitely many values of $n$. Moreover, if $\calA_{\mathrm{dense}}(S_n,\sigma/2)$ and $\calA_{\mathrm{dense}}(S_{n+1},\sigma/2)$ both occur for suitably large $n$, their respective clusters must intersect by the pigeonhole principle and hence must be equal. Thus almost surely there exists a single cluster $\calC_\infty^\star$ such that for all but finitely many $n$, $|\calC_\infty^\star \cap S_n| \ge (1-\sigma/2)\vol(S_n)$. Such a cluster is necessarily both infinite and unique, as required. Hence, let  
    \begin{equation}\label{eq:a-infty}
    \calA_\infty(S,\sigma/2):=\{ |S \cap \calC_\infty^\star| \ge (1-\sigma/2)\vol(S)\}.
    \end{equation}
     Then using \eqref{eq:percolation-density-0}, and the above intersection property when one moves from $S_i$ to $S_{i+1}$, there exists $n_1 \ge n_0$ such that 
    \begin{equation}\label{eq:percolation-density-1}
        \forall n \ge n_1\colon\pr(\calA_\infty(S_n,\sigma/2)) \ge 1 - \sum_{i\ge n}\pr(\neg\calA_{\mathrm{dense}}(S_i,\sigma/2)) \ge 1 - \exp(-\eta n/2).
    \end{equation}
We now develop a boxing scheme. By definition, $S_{4r}=[-2r, 2r]^d$ which fully contains $B_r(0)$, and further, for each $y\in B_r(0)$, also 
$B_{(\log r)^{3/2}}(y)$ is fully contained in $S_{4r}$.
    For all $r>0$, let $\ell(r) := (\log r)^{4/3}$, so that $r = \exp(\ell(r)^{3/4})$. Let $\calS_{4r}$ be a partition of $S_{4r}$ into at most $(4r/\ell(r))^d$ boxes of (equal) side length $\ell \in [\ell(r),2\ell(r)]$. We may assume that $r\ggs \delta, \sigma,\eta$ such that $\ell(r) \ge n_1$ and furthermore:
    \begin{enumerate}[(i)]
        \item\label{item:box-1} $(4r/\ell(r))^d\cdot \exp(-\eta \ell/2) = 4^d\exp(d\ell(r)^{3/4}-\eta \ell/2)/\ell(r)^{d} \le \delta$ for all $\ell\in[\ell(r), 2\ell(r)]$ and
        \item\label{item:box-2} for all $y \in B_r(x)$, there exist disjoint boxes $S_1^{\scriptscriptstyle{(y)}},\dots,S_t^{\scriptscriptstyle{(y)}} \in \calS_{4r}$ such that $S_i^{\scriptscriptstyle{(y)}} \subseteq B_{(\log r)^{3/2}}(y)$ for all $i\in[t]$ and such that $\vol(S_1^{\scriptscriptstyle{(y)}} \cup \dots \cup S_t^{\scriptscriptstyle{(y)}}) \ge (1-\sigma/2)|B_{(\log r)^{3/2}}(y)\cap\Z^d|$.
    \end{enumerate}
    Here, \eqref{item:box-1} implies that the probability that there is a box $S$ in the boxing scheme $\calS_{4r}$ for which $\calA_{\infty}(S, \sigma/2)$ does not holds is at most $\delta$, by combining \eqref{eq:percolation-density-1} with a union bound over the at most $(4r/\ell(r))^d$ boxes.
 So, introducing $\calA_\mathrm{good}(4r,\sigma/2)$ as the event that for all $S \in \calS_{4r}$, the event $\calA_\infty(S,\sigma/2)$ in \eqref{eq:a-infty} occurs, then
    \begin{equation}\label{eq:bond-percolation-density-prob}
        \pr(\calA_\mathrm{good}(4r,\sigma/2)) \ge 1 - (4r/\ell(r))^d\exp(-\eta \ell/2) \ge 1 - \delta.
    \end{equation}
    Suppose that $\calA_\mathrm{good}(4r,\sigma/2)$ occurs, fix some $y\in B_r(0)$ and let $S_1^{\scriptscriptstyle{(y)}},\dots,S_t^{\scriptscriptstyle{(y)}}$ be the boxes in \eqref{item:box-2} contained in $B_{(\log r)^{3/2}}(y)$. Then by the uniqueness of the cluster $\calC_\infty^\star$,
    \begin{align*}
        \frac{|B_{(\log r)^{3/2}}(y) \cap \calC_\infty^\star|}{|B_{(\log r)^{3/2}}(y) \cap \Z^d|} 
        &\ge \frac{1}{|B_{(\log r)^{3/2}}(y)\cap \Z^d|}\sum_{i \in [t]}|S_i^{\scriptscriptstyle{(y)}} \cap \calC_\infty^\star|\\
        &\ge \frac{1}{|B_{(\log r)^{3/2}}(y)\cap \Z^d|}\sum_{i \in [t]}(1-\tfrac{\sigma}{2})\vol(S_i^{\scriptscriptstyle{(y)}})
        \ge (1-\tfrac{\sigma}{2})^2 > 1-\sigma,
    \end{align*}
    where in the last step we used that the union of the boxes $S_1^{\scriptscriptstyle{(y)}}, \dots, S_t^{\scriptscriptstyle{(y)}}$ covers at least $1-\sigma$ proportion of the vertices in $|B_{(\log r)^{3/2}}(y)\cap \Z^d|$ by \eqref{item:box-2}.
    Hence,~\eqref{eq:bond-percolation-density} follows from~\eqref{eq:bond-percolation-density-prob}.
\end{proof}

We next prove Lemma~\ref{lem:bond-percolation-linear}, which states that the infinite component of highly supercritical Bernoulli bond percolation has linear graph distances realised by paths with sublinear deviation.
\begin{proof}[Proof of Lemma \ref{lem:bond-percolation-linear}]\label{proof:lemma3.3}
    We first recall a result on linear scaling of distances in $\calC^\star_\infty$. Let $\{x\leftrightarrow y\}$ be the event that $x,y\in \Z^d$ lie in the same component of $\omega^\star$, and let $d_\star(x,y)$ denote their graph distance in $\omega^\star$. By~\cite[Theorem~1.1]{antal1996chemical}, there exists $\kappa',\eps_0>0$ such that for all $\eps \le \eps_0$ and for all $a \in \Z^d$,\footnote{The formulation of \cite[Theorem~1.1]{antal1996chemical} allows $\kappa'$ to depend on the edge-retention probability $p=1-\eps$ because it allows $p$ to be arbitrarily close to the criticality threshold. So it is not quite clear from the formulation that $\kappa'$ is independent of $\eps$. However, the condition $\eps\le\eps_0$ means that our $p$ is bounded away from the criticality threshold, and the proof in \cite{antal1996chemical} relies on a domination argument, so we may use the same $\kappa'$ for all $\eps \le \eps_0$.}
    \begin{equation}\label{eq:dense-geometric-perc-0}
        \limsup_{|y|\to\infty} \frac{1}{|x-y|} \log\pr\big(\{x\leftrightarrow y\} \cap \{d_\star(x,y) > \kappa'|x-y|\}\big) < 0.
    \end{equation}
    We say that a site $x\in\Z^d$ has \emph{$r$-linear scaling} if $x \in \calC^\star_\infty$ and
    \begin{equation}\label{eq:r-linear-scaling}
    d_\star(x,y) \le \kappa' |x-y| \quad \mbox{ holds for all  } y \in \calC_\infty^\star \setminus B_r(x). 
    \end{equation}
    By a union bound over all $y\in \Z^d\setminus B_r(x)$, it follows from~\eqref{eq:dense-geometric-perc-0} that there exist $c_1,r_1>0$ (depending on $d$) such that for all $x \in \Z^d$ and $r\ge r_1$,
    \begin{equation}\label{eq:dense-geometric-perc}
        \pr\big(x \mbox{ has $r$-linear scaling or }x \notin \calC^\star_\infty\big) \ge 1 - e^{-c_1r}.
    \end{equation}
    We now use~\eqref{eq:dense-geometric-perc} to show that there exist constants $c_2,r_2>0$ (depending on $\zeta$ and $d$) such that if $x,y \in \calC_\infty^\star$ and $|x-y|\ge r_2$, then with probability at least $1-e^{-c_2|x-y|}$ there exists a linear-length low-deviation path from $x$ to $y$. 
    Let $\kappa := 2\sqrt{d} \kappa'$ and $K := (\kappa'+1)\sqrt{d}/\zeta$, and let $x,y \in \calC_\infty^\star$. Cover the straight line segment $S_{x,y}$ by a sequence of $k \le 2K$ cubes $Q^{\scriptscriptstyle{(1)}},\ldots,Q^{\scriptscriptstyle{(k)}}$ in such a way that (see also Figure~\ref{fig:small-deviation}):
    \begin{enumerate}[(a)]
    \setlength\itemsep{-0.2em}
        \item each cube $Q^{\scriptscriptstyle{(i)}}$ has side length $|x-y|/K$;
        \item $Q^{\scriptscriptstyle{(1)}}$ is the cube covering $x$, $Q^{\scriptscriptstyle{(k)}}$ is the cube covering $y$, and $Q^{\scriptscriptstyle{(i)}} \cap Q^{\scriptscriptstyle{(i+1)}}\cap S_{x,y} \ne \emptyset$ for all $i \in [k-1]$;
        \item for all $i \in [k-1]$, the intersection $Q^{\scriptscriptstyle{(i)}}  \cap Q^{\scriptscriptstyle{(i+1)}}$ contains a cube of side-length $|x-y|/(8K)$;
        \item all pair of sites $a,b$ that fall into distinct sets in the list $\{x\}, Q^{\scriptscriptstyle{(1)}}  \cap Q^{\scriptscriptstyle{(2)}} , Q^{\scriptscriptstyle{(2)}} \cap Q^{\scriptscriptstyle{(3)}} ,\ldots, Q^{\scriptscriptstyle{(k-1)}}  \cap Q^{\scriptscriptstyle{(k)}} , \{y\}$ satisfy $|a-b| \ge |x-y|/(2K)$.
    \end{enumerate}

\begin{figure}[t]
    \centering
    \includegraphics[trim=3.0cm 0.0cm 0.0cm 0.0cm,clip,width=0.95\textwidth]{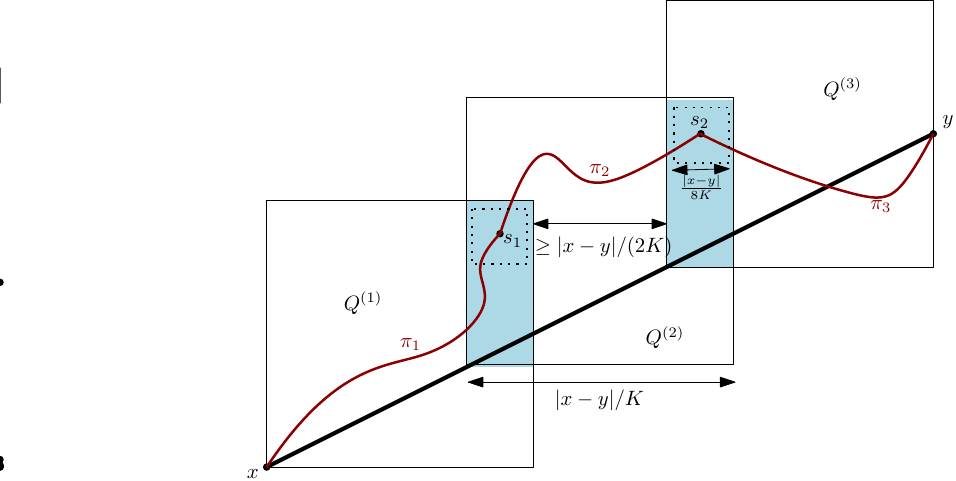}
    \caption[]{\small 
    Example for $k=3$ cubes covering the line from $x$ to $y$. Any two adjacent cubes overlap by at least $4^{-d}\mathrm{Vol}(Q^{\scriptscriptstyle{(i)}})$ and contain a cube of side-length $|x-y|/(8K)$ and any two intersections have distance at least $r/2 = |x-y|/(2K)$, where $r = |x-y|/K$ is the side-length of the cubes. 
    }
    \label{fig:small-deviation}
\end{figure}
    Then the following events occur with probability at least $1-e^{-c_2|x-y|}$ if $|x-y|\ge r_2$ (we specify $c_2$ and $r_2$ below):
    \begin{enumerate}[(i)]
    \setlength\itemsep{-0.2em}
        \item\label{item:event-1} For all $i \in [k-1]$, $Q^{\scriptscriptstyle{(i)}}  \cap Q^{\scriptscriptstyle{(i+1)}} $ contains at least one site $s_i \in \calC^\star_\infty$;
        \item\label{item:event-2} Both $x$ and $y$ have $r$-linear scaling with $r=|x-y|/(2K)$ conditioned on $x,y\in \calC_\infty^\star$;
        \item\label{item:event-3} For all $i \in [k-1]$, all sites $z \in Q^{\scriptscriptstyle{(i)}}  \cap Q^{\scriptscriptstyle{(i+1)}}  \cap \calC^\star_\infty$ have $r$-linear scaling with $r=|x-y|/(2K)$.
    \end{enumerate}
    Indeed, since $\eps\lls d$, $\calC^\star_\infty$ has density at least $1/2$ and since the intersection $Q^{\scriptscriptstyle{(i)}} \cap Q^{\scriptscriptstyle{(i)}} $ contains a cube $S$ of side-length $|x-y|/(8K)$, \eqref{eq:percolation-density-1} applies, i.e., the event in \eqref{eq:a-infty} that $S\cap \calC_\infty$ contains linearly vertices in the volume of $S$ holds with probability $1-\exp(-\eta|x-y|/(8K))$. This event implies that there is at least one vertex in $\calC_\infty^\star\cap S$ and with $c_3:=\eta/8K$, there exists $r_3>0$ depending on $\zeta,d$ such that \eqref{item:event-1} holds with probability at least $1-e^{-c_3|x-y|}$ whenever $|x-y|\ge r_3$. Moreover, using \eqref{eq:dense-geometric-perc} and a union bound over $i\in[k-1]$ and over all sites $z\in Q^{\scriptscriptstyle{(i)}}  \cap Q^{\scriptscriptstyle{(i+1)}}$ (polynomially many in $|x-y|$) and over $x,y$, we deduce that there exists $r_4>0$ depending on $d$ such that \eqref{item:event-2} and \eqref{item:event-3} also hold with probability $1-e^{-c_1|x-y|/(4K)}$ whenever $|x-y|\ge r_4$. Taking $r_2 = \max\{r_3,r_4\}$ and $c_2$ small enough so that $e^{-c_3 r}+e^{-c_1r/(4K)} \le e^{-c_2r}$ for all $r\ge r_2$ guarantees that the above events occur with probability at least $1-e^{-c_2|x-y|}$ whenever $|x-y|\ge r_2$.
    
    Suppose the events in \eqref{item:event-1}--\eqref{item:event-2} all occur. Choose now for all $i\in[k-1]$ sites $s_i\in \calC_\infty^\star \cap Q^{\scriptscriptstyle{(i)}}  \cap Q^{\scriptscriptstyle{(i+1)}}$. Writing $s_0=x, s_k=y$, it holds for all $i\in[k]$ that
     \begin{equation}\label{eq:distance-sisi}
        |s_{i-1}-s_{i}|>|x-y|/(2K) \quad \mbox{and} \quad |s_{i-1}-s_{i}|\le \sqrt{d}|x-y|/K.
    \end{equation}
    Then the $r$-linear-scaling property of $s_i$ and $s_{i-1}$ with $r=|x-y|/(2K)$ in \eqref{eq:r-linear-scaling}, combined with \eqref{eq:distance-sisi}, implies deterministically that for all $i \in [k]$ there exists a path $\pi_i$ from $s_{i-1}$ to $s_i$ in $\calC_\infty^\star$ of length at most $\kappa'\sqrt{d} |x-y|/K$. Since $s_{i-1}$ has distance at most $\sqrt{d}|x-y|/K$ from the segment $S_{x,y}$ and $\pi_i$ contains nearest neighbour edges,  by the definition of $K=(\kappa'+1)\sqrt{d}/\zeta$ at the beginning of the proof and Definition \ref{def:deviation},
    \begin{equation}
    \mathrm{dev}_{xy}(\pi_i) \le \kappa' \sqrt{d} |x-y|/K+\sqrt{d}|x-y|/K = \zeta |x-y|.
    \end{equation} Let $\pi = \pi_1\ldots \pi_k$; then since $k\le 2K$, and $\kappa = 2\sqrt{d} \kappa'$,  $\pi$ has length at most $k\kappa'\sqrt{d} |x-y|/K \le \kappa|x-y|$ and deviation at most $\max(\mathrm{dev}_{xy}(\pi_i))\le \zeta|x-y|$ (see Definition \ref{def:deviation}). Therefore, we have shown that if $x,y\in\calC_\infty^\star$ and $|x-y|\ge r_2$, then with probability at least $1-e^{-c_2|x-y|}$ there exists a path $\pi$ from $x$ to $y$ with length at most $|\pi|\le\kappa |x-y|$ and deviation at most $\mathrm{dev}(\pi)\le \zeta |x-y|$. Taking a union bound over all $y\in\calC_\infty^\star \setminus B_r(x)$ and taking $|x-y|\ge r \ggs \zeta$ yields~\eqref{eq:bond-percolation-distances} with $c=c_2/2$.
\end{proof}

Lastly, we prove Lemma \ref{lem:dense-geometric-locally-dense}, which uses Lemma~\ref{lem:bond-percolation-density} to obtain that the infinite subgraph $\calH_\infty$ of a $(R, \eps)$-dense random geometric graph also has overall high density.

\begin{proof}[Proof of Lemma  \ref{lem:dense-geometric-locally-dense}]\label{proof:lemma3.7}
    Let $\omega^\star$ be a bond percolation process with retention probability $1-20d\eps$.
    We take $\eps$ small enough that Lemma~\ref{lem:bond-percolation-density} applies with $\sigma_{\ref{lem:bond-percolation-density}} = \sigma^3$ and $\delta_{\ref{lem:bond-percolation-density}} = \delta/3$; thus $\omega^\star$ has a unique infinite component $\calC_\infty^\star$ and $\calH_\infty$ is well-defined 
    in \eqref{eq:h-infty}. Then, since $\calH_\infty$ consists of those boxes $S_z$ for which $z\in \Z^d$ belong to $\calC_\infty^\star$ in $\omega^\star$, if $z\in \calC_\infty^\star$, $z$ has at least one open adjacent bond. By the defining coupling in Lemma \ref{lem:bond-percolation-coupling}, such a bond is only open in $\omega^\star$ if $G[S_z]$ is non-empty and connected. This implies that if $z\in \calC_\infty$ then all the vertices in $G[S_z]$ are in $\calH_\infty$. Since 
    $\pr(0\in \calC_\infty^\star)\ge 1-\sigma_{\ref{lem:bond-percolation-density}}$ in Lemma \ref{lem:bond-percolation-density}, the translation invariance of $\Z^d$ readily implies that for any box $S$, $\pr(\calV\cap S\subseteq \calH_\infty) \ge 1-\sigma_{\ref{lem:bond-percolation-density}}\ge 1-\sigma$. 
     
     We turn to prove \eqref{eq:dense-geometric-density}. Throughout, let $\rho = (\log r)^2$ for brevity.
    We set out some preliminary notation and observations. Recall that for all $z\in \Z^d$, $S_z$ is the unique box in $\calS$ such that $Rz \in S_z$ and that boxes have side-length $R$. For all sets $A \subseteq \R^d$ we write $A^\star = \{z \in \Z^d\colon S_z \subseteq A\}$; for the renormalisation of $A$, thus $\mathrm{Boxes}(A):=\bigcup_{z \in A^\star} S_z$ is the union of all boxes fully contained in $A$. Finally, for all $y \in \R^d$, by $\lfloor y/R\rfloor$ we mean taking lower-integer part of each coordinate, here $\lfloor y/R\rfloor$ is the renormalised site, i.e., in $\Z^d$, corresponding to the (box containing) $z\in \R^d$. Clearly then $y\in S_{\lfloor y/R\rfloor}$. Since $r\ggs R$, and the diameter of each box is $\sqrt{d}R$, it is not hard to see that for all $y \in \R^d$, and with $\rho=(\log r)^2$,
    \begin{align*}
  B_{\rho-R\sqrt{d}}(y)& \subseteq  \mathrm{Boxes}(B_{\rho}(y)) \subseteq B_{\rho}(y),\\ 
    B_{\rho/R - 2\sqrt{d}}(\lfloor y/R\rfloor)\cap\Z^d  &\subseteq B_{\rho}(y)^\star \subseteq B_{{\rho}/R + \sqrt{d}}(\lfloor y/R\rfloor),\\
  B_{\rho-R\sqrt{d}}(y)^\star&\subseteq  B_{\rho}(R\lfloor y/R\rfloor)^\star \cap B_{\rho}(y)^\star.
    \end{align*}
Thus there exists $C>0$ depending on $R$ and $d$ (but not on $r$ and thus neither on $\rho$) such that for all $y \in \R^d$,
    \begin{align}\label{eq:dense-geometric-boxes-1}
       \big|\mathrm{Vol}(B_{\rho}(y)) - R^d|B_{\rho}(y)^\star|\big| &\le C{\rho}^{d-1} \le \sigma^4 \rho^d,\\ \label{eq:dense-geometric-boxes-2}
       \big||B_{\rho}(y)^\star| - |B_{{\rho}/R}(\lfloor y/R\rfloor)\cap \Z^d| \big| &\le C{\rho}^{d-1} \le \sigma^4\rho^d,\\
       \label{eq:dense-geometric-boxes-3}
       |B_{\rho}(R\lfloor y/R\rfloor)^\star \setminus B_{\rho}(y)^\star| &\le C \rho^{d-1} \le \sigma^4 \rho^d.
    \end{align}
These all intuitively recover the isoperimetric properties of $\R^d$: for sufficiently large $\rho$, the volume of a ball of radius $\rho$ can be well approximated by a boxing scheme of side-length $R$ \eqref{eq:dense-geometric-boxes-1}, and also the number of boxes we need both have error of order $\rho^{(d-1)/d}$ \eqref{eq:dense-geometric-boxes-2}, and switching from a renormalised set around $y$ to the renormalised set around the renormalised site corresponding to $y$ also causes small error \eqref{eq:dense-geometric-boxes-3}.
  
  Now we relate \eqref{eq:bond-percolation-density} to \eqref{eq:dense-geometric-density}. Note that \eqref{eq:bond-percolation-density} has radius $(\log r)^{3/2}$ inside the probability sign concerning overall high density in $\omega^\star$, while \eqref{eq:dense-geometric-density} has radius $(\log r)^2$ concerning overall high density of $\calH_\infty$ in $G$. Hence, given $r, R$, let $r^\star$ be such that $(\log r^\star)^{3/2} = (\log r)^2/R$, i.e.\ $r^\star := \exp(((\log r)^2/R)^{2/3})$, and clearly for all sufficiently large $r\ggs R$,  $r^\star > 2r/R$. We will now apply Lemma \ref{lem:bond-percolation-density} with $r=r^\star$. Then, the radius inside the fraction in \eqref{eq:bond-percolation-density} is exactly $(\log r^\star)^{3/2}=(\log r)^2/R$. So, for $x\in\R^d$, when we write the event in \eqref{eq:bond-percolation-density} for the site $\lfloor x/R\rfloor\in \Z^d$ and using radius $r^\star$ and $\sigma_{\ref{lem:bond-percolation-density}}=\sigma^3$, we obtain
      \begin{equation}\label{eq:dense-geometric-adense-star}
        \calA_\mathrm{dense}^\star(x, r^\star) = \Big\{\forall y \in B_{r^\star}(\lfloor x/R\rfloor)\colon \frac{|B_{(\log r)^2/R}(y) \cap \calC_\infty^\star|}{|B_{(\log r)^2/R}(y) \cap \Z^d|} \ge 1-\sigma^3\Big\}.
    \end{equation}
    Now applying Lemma~\ref{lem:bond-percolation-density} yields directly that $\pr(\mathcal{A}_{\mathrm{dense}}^\star) \ge 1-\delta_{\ref{lem:bond-percolation-density}} = 1-\delta/3$. When we consider any $y \in B_{r}(x)$ in $\R^d$, clearly $\lfloor y/R\rfloor \in B_{2r/R}(x) \subseteq B_{r^\star}(\lfloor x/R\rfloor)$ because $r^\star\ge 2r/R$; thus if~\eqref{eq:dense-geometric-adense-star} holds, then also
    \[
        \forall y \in B_{r}(x)\colon \frac{|B_{(\log r)^2/R}(\lfloor y/R\rfloor ) \cap \calC_\infty^\star|}{|B_{(\log r)^2/R}(\lfloor y/R\rfloor) \cap \Z^d|} \ge 1-\sigma^3.
    \]
    Note that $B_{(\log r)^2/R}(\lfloor y/R\rfloor )$, when we move from sites in $\Z^d$ to boxes in $\calS$, corresponds roughly to the set of boxes contained in $B_{(\log r)^2}(y)$. In fact \eqref{eq:dense-geometric-boxes-2} exactly quantifies the error being small. 
  Hence, since $r$ is large,  it follows from~\eqref{eq:dense-geometric-boxes-2} that $\calA_\mathrm{dense}^\star(x,r^\star)$ also implies the following event:
    \begin{equation}\label{eq:dense-geometric-adense}
        \calA_{\mathrm{dense}} (x,r):= \Big\{\forall y \in B_{r}(x)\colon \frac{|B_{(\log r)^2}(y)^\star \cap \calC_\infty^\star|}{|B_{(\log r)^2}(y)^\star|} \ge 1-2\sigma^3\Big\}.
    \end{equation}
    We have shown that
    \begin{equation}\label{eq:dense-geometric-adense-bound}
        \pr(\calA_{\mathrm{dense}}(x,r)) \ge \pr(\calA_{\mathrm{dense}}^\star(x,r^\star)) \ge  1-\delta/3.
        \end{equation}
    Interpreting the event on the lhs, $\calA_{\mathrm{dense}}$ says that sites in $\calC_\infty^\star$ form high density in $B_{(\log r)^2}(y)^\star$, or equivalently that boxes containing vertices of $\calH_\infty$ are dense in $B_{(\log r)^2}(y)$; meanwhile, the event in~\eqref{eq:dense-geometric-density} says that the vertices inside these boxes are dense in $B_{(\log r)^2}(y)\cap \calV$. Thus we can achieve the high density in \eqref{eq:dense-geometric-density} if we can control the number of vertices per box.
    
    We now prove a concentration bound for the number of vertices of $G$ in a given collection of boxes; since $\calC_\infty^\star$ is random, we basically will sum the errors over all realisations of $\calC_\infty^\star$ satisfying $\calA_{\mathrm{dense}}(x,r)$.     
We again abbreviate $\rho:=(\log r)^2$. Here below, we denote by $A$ any set in $B_{\rho}(y)^\star$ with number of vertices $|A| \le 2\sigma^3 |B_{\rho}(y)^\star|$ so that $A$ can essentially serve as a possible realisation of the \emph{complement} of $\calC_\infty^\star$ inside the ball $B_{\rho}(y)^\star$ when the event $\calA_{\mathrm{dense}}(x,r)$ holds. Then, define $\calA_{\mathrm{box}}$ as
    \begin{equation}\label{eq:dense-geometric-local-0a}
      \calA_{\mathrm{box}}:= \bigcap_{y \in B_r(x)} \bigcap_{\substack{A\subseteq B_{\rho}(y)^\star\\|A| \le 2\sigma^3 |B_{\rho}(y)^\star|}}\bigg\{  \Big|\bigcup_{z \in B_{\rho}(y)^\star \setminus A} \calV[S_z]\Big| \ge (1-\sigma/5)\mathrm{Vol}(B_{\rho}(y))\Big\}\bigg\},
    \end{equation}
    i.e., that leaving out the boxes in any not-too-large set $A$ from $B_{\rho}(y)^\star$, the remaining boxes still contain enough vertices proportional to the volume. When $\calV = \Z^d$, $\calA_{\mathrm{box}}$ always occurs. For $\calV$ a Poisson point process, we dominate $\calA_{\mathrm{box}}$ by an intersection of events as follows. For all $y \in \R^d$ and $A \subseteq \Z^d$, let 
    \begin{equation}\label{eq:dense-geometric-local-0b}
        \calA_{\mathrm{box}}(y,A) := \bigg\{ \Big|\bigcup_{z \in B_{\rho}(y)^\star \setminus A} \calV[S_z]\Big| \ge (1-\sigma/10)R^d|B_{\rho}(y)^\star|\bigg\}.
    \end{equation}
    Applying~\eqref{eq:dense-geometric-boxes-1} yields that $(1-\sigma/10)R^d|B_{\rho}(y)^\star| \ge (1-\sigma/5)\vol(B_{\rho}(y))$. Moreover, by~\eqref{eq:dense-geometric-boxes-3}, we can lower-bound $B_{\rho}(y)^\star$ in~\eqref{eq:dense-geometric-local-0a} by $B_{\rho}(R\lfloor y/r\rfloor)^\star \setminus A_y$ for some set $A_y$ with $|A_y| \le \sigma^3 |B_{\rho}(y)^\star|$, effectively discretising the continuous intersection over $y \in B_r(x)$. Thus
    \begin{align}
        \calA_{\mathrm{box}} 
        &\supseteq \bigcap_{y \in B_r(x)} \bigcap_{\substack{A\subseteq B_{\rho}(y)^\star\\|A| \le 2\sigma^3 |B_{\rho}(y)^\star|}} \calA_{\mathrm{box}}(y,A)
        \supseteq \bigcap_{y \in B_r(x)} \bigcap_{\substack{A\subseteq B_{\rho}(R\lfloor y/R\rfloor)^\star\\|A| \le 4\sigma^3 |B_{\rho}(R\lfloor y/R\rfloor)^\star|}} \calA_{\mathrm{box}}(R\lfloor y/R\rfloor,A)\nonumber\\ \label{eq:dense-geometric-local-2}
        &\supseteq \bigcap_{z \in B_{2r}(x)^\star} \bigcap_{\substack{A\subseteq B_{\rho}(Rz)^\star\\|A| \le 4\sigma^3 |B_{\rho}(Rz)^\star|}} \calA_{\mathrm{box}}(Rz,A),
    \end{align}
We obtained the last row by noting that the rhs of the first row is the same event for all $y$ with the same renormalised site $\lfloor y/R\rfloor$. 
       When $\calV$ is a PPP, the number of vertices in boxes in $B_\rho(y)^\star\setminus A$ in~\eqref{eq:dense-geometric-local-0b} follows a Poisson distribution with mean $R^d|B_{\rho}(y)^\star \setminus A|$; thus when $|A| \le 4\sigma^3 |B_{\rho}(y)^\star|$, this mean is at least $(1-\sigma/20)R^d|B_{\rho}(y)^\star|$, and so by a Chernoff bound we arrive to
    \[
        \pr(\neg \calA_{\mathrm{box}}(y,A)) \le \exp(-\sigma^2\rho^d/300).
    \]
    Now some combinatorics to deal with the union when taking the complement event in \eqref{eq:dense-geometric-local-2}: since $\sigma$ is small, the number of sets $A \subseteq B_{\rho}(Rz)^\star$ with $|A| \le 4\sigma^3|B_{\rho}(Rz)^\star|$ is at most $\exp(\sigma^2\rho^d/600)$. Moreover, there are at most $4(r/R)^d$ choices of $z \in B_{2r}(x)^\star$ in \eqref{eq:dense-geometric-local-2}.
    Hence using a union bound, $\rho = (\log r)^2$, and $r\ggs \delta, \sigma$, for Poisson $\calV$,
    \begin{align}\label{eq:dense-geometric-avert-bound}
        \pr(\calA_{\mathrm{box}}) \ge 1 - 4(r/R)^d\exp(\sigma^2\rho^d/600) \cdot \exp(-\sigma^2\rho^d/300) \ge 1 - \delta/3.
    \end{align}
 Our final event, $\calA_{\mathrm{ball}}$, says that all radius-$\rho=(\log r)^2$ balls near $x$ contain at most roughly the expected number of vertices, so that 
    \begin{equation}\label{eq:a-ball}
    \begin{aligned}
        \calA_{\mathrm{ball}} &= \bigcap_{y \in B_r(x)} \Big\{|B_{\rho}(y) \cap \calV| \le (1+\sigma/5)\vol(B_{\rho}(y))\Big\} \\
        &\supseteq \bigcap_{y \in B_r(x) \cap \Z^d} \Big\{ |B_{\rho}(y) \cap \calV| \le (1+\sigma/10)\vol(B_{\rho}(y))\Big\}.
        \end{aligned}
    \end{equation}
    where in the second row we discretised the space to obtain a finite intersection at the cost of reducing the error to $\sigma/10$.
    It is then immediate from Chernoff bounds and a union bound over $y$ that
    \begin{equation}\label{eq:dense-geometric-aball-bound}
        \pr(\calA_{\mathrm{ball}}) \ge 1 - \delta/3.
    \end{equation}
    Now by a union bound on their complements in~\eqref{eq:dense-geometric-adense-bound}, \eqref{eq:dense-geometric-avert-bound}, \eqref{eq:dense-geometric-aball-bound}, the intersection $\calA_{\mathrm{dense}}\cap\calA_{\mathrm{box}}\cap\calA_{\mathrm{ball}}$ occurs with probability at least $1-\delta$. Assume the intersection of the three events occur and let $y \in B_r(x)$. Since $\calA_{\mathrm{dense}}$ occurs, $|B_{\rho}(y)^\star \setminus \calC_\infty^\star| \le 2\sigma^3|B_{\rho}(y)^\star|$. Since $\calA_{\mathrm{box}}$ also occurs in \eqref{eq:dense-geometric-adense}, taking $A = B_{\rho}(y)^\star \setminus \calC_\infty^\star$ yields
    \begin{align*}
        |B_{\rho}(y) \cap \calH_\infty| 
        &= \Big|B_{\rho}(y)\cap \bigcup_{z \in \calC_\infty^\star} \calV[S_z] \Big| \ge \Big|\bigcup_{z \in B_{\rho}(y)^\star \cap \calC_\infty^\star}\calV[S_z]\Big|
        \ge (1-\sigma/5)\vol(B_{\rho}(y)).
    \end{align*}
    Finally, since $\calA_{\mathrm{ball}}$ occurs in \eqref{eq:a-ball}, it follows that
    \[
        \frac{|B_{\rho}(y)\cap \calH_\infty|}{|B_{\rho}(y) \cap \calV|} \ge \frac{(1-\sigma/5)\vol(B_{\rho}(y))}{(1+\sigma/5)\vol(B_{\rho}(y))} \ge 1-\sigma.
    \]
    Hence, the event in \eqref{eq:dense-geometric-density} is implied by the intersection of $\calA_{\mathrm{dense}}\cap\calA_{\mathrm{box}}\cap\calA_{\mathrm{ball}}$, which finishes the proof of~\eqref{eq:dense-geometric-density}.
    \end{proof}

\bibliographystyle{abbrv}	
\bibliography{references}

\begin{thebibliography}{10}

\bibitem{adriaans2018weighted}
E.~Adriaans and J.~Komj{\'a}thy.
\newblock Weighted distances in scale-free configuration models.
\newblock {\em Journal of Statistical Physics}, 173:1082--1109, 2018.

\bibitem{andjel1993characteristic}
E.~D. Andjel.
\newblock Characteristic exponents for two-dimensional bootstrap percolation.
\newblock {\em The Annals of Probability}, 21(2):926--935, 1993.

\bibitem{antal1996chemical}
P.~Antal and A.~Pisztora.
\newblock On the chemical distance for supercritical bernoulli percolation.
\newblock {\em The Annals of Probability}, pages 1036--1048, 1996.

\bibitem{auffinger201750}
A.~Auffinger, M.~Damron, and J.~Hanson.
\newblock {\em 50 years of first-passage percolation}, volume~68.
\newblock American Mathematical Soc., 2017.

\bibitem{baumler2023distances}
J.~B{\"a}umler.
\newblock Distances in 1‖ x-y‖ 2 d percolation models for all dimensions.
\newblock {\em Communications in Mathematical Physics}, 404(3):1495--1570,
  2023.

\bibitem{berger2004lower}
N.~Berger.
\newblock A lower bound for the chemical distance in sparse long-range
  percolation models.
\newblock {\em arXiv preprint math/0409021}, 2004.

\bibitem{bhamidi2017universality}
S.~Bhamidi, R.~van~der Hofstad, and G.~Hooghiemstra.
\newblock Universality for first passage percolation on sparse random graphs.
\newblock {\em The Annals of Probability}, 45(4):2568--2630, 2017.

\bibitem{bingham1989regular}
N.~H. Bingham, C.~M. Goldie, and J.~L. Teugels.
\newblock {\em Regular variation}, volume~27.
\newblock Cambridge university press, 1989.

\bibitem{biskup2004scaling}
M.~Biskup.
\newblock On the scaling of the chemical distance in long-range percolation
  models.
\newblock {\em The Annals of Probability}, 32(4):2938--2977, 2004.

\bibitem{biskup2019sharp}
M.~Biskup and J.~Lin.
\newblock Sharp asymptotic for the chemical distance in long-range percolation.
\newblock {\em Random Structures \& Algorithms}, 55(3):560--583, 2019.

\bibitem{bonaventura2014characteristic}
M.~Bonaventura, V.~Nicosia, and V.~Latora.
\newblock Characteristic times of biased random walks on complex networks.
\newblock {\em Physical Review E}, 89(1):012803, 2014.

\bibitem{bringmann2016average}
K.~Bringmann, R.~Keusch, and J.~Lengler.
\newblock Average distance in a general class of scale-free networks with
  underlying geometry.
\newblock {\em arXiv preprint arXiv:1602.05712}, 2016.

\bibitem{bringmann2019geometric}
K.~Bringmann, R.~Keusch, and J.~Lengler.
\newblock Geometric inhomogeneous random graphs.
\newblock {\em Theoretical Computer Science}, 760:35--54, 2019.

\bibitem{chatterjee2016multiple}
S.~Chatterjee and P.~S.~Dey.
\newblock Multiple phase transitions in long-range first-passage percolation on
  square lattices.
\newblock {\em Communications on Pure and Applied Mathematics}, 69(2):203--256,
  2016.

\bibitem{cox1981some}
J.~T. Cox and R.~Durrett.
\newblock Some limit theorems for percolation processes with necessary and
  sufficient conditions.
\newblock {\em The Annals of Probability}, pages 583--603, 1981.

\bibitem{deijfen2013scale}
M.~Deijfen, R.~van~der Hofstad, and G.~Hooghiemstra.
\newblock Scale-free percolation.
\newblock In {\em Annales de l'IHP Probabilit{\'e}s et statistiques}, volume
  49,3, pages 817--838, 2013.

\bibitem{deprez2015inhomogeneous}
P.~Deprez, R.~S. Hazra, and M.~V. W{\"u}thrich.
\newblock Inhomogeneous long-range percolation for real-life network modeling.
\newblock {\em Risks}, 3(1):1--23, 2015.

\bibitem{deprez2019scale}
P.~Deprez and M.~V. W{\"u}thrich.
\newblock Scale-free percolation in continuum space.
\newblock {\em Communications in Mathematics and Statistics}, 7(3):269--308,
  2019.

\bibitem{DP-percolation}
J.-D. Deuschel and A.~Pisztora.
\newblock Surface order large deviations for high-density percolation.
\newblock {\em Probability Theory and Related Fields}, pages 467--482, 1996.

\bibitem{ding2018centrality}
C.~Ding and K.~Li.
\newblock Centrality ranking in multiplex networks using topologically biased
  random walks.
\newblock {\em Neurocomputing}, 312:263--275, 2018.

\bibitem{embrechts1980closure}
P.~Embrechts and C.~M. Goldie.
\newblock On closure and factorization properties of subexponential and related
  distributions.
\newblock {\em Journal of the Australian Mathematical Society}, 29(2):243--256,
  1980.

\bibitem{feldman2017high}
J.~Feldman and J.~Janssen.
\newblock High degree vertices and spread of infections in spatially modelled
  social networks.
\newblock In {\em International Workshop on Algorithms and Models for the
  Web-Graph}, pages 60--74. Springer, 2017.

\bibitem{Fransson1720143}
C.~Fransson.
\newblock {\em Stochastic epidemics on random networks and competition in
  growth}.
\newblock PhD thesis, Stockholm University, Department of Mathematics, 2023.
\newblock ISBN: 978-91-8014-145-1 (electronic).

\bibitem{frieze2023random}
A.~Frieze and M.~Karo{\'n}ski.
\newblock {\em Random Graphs and Networks: A First Course}.
\newblock Cambridge University Press, 2023.

\bibitem{giuraniuc2006criticality}
C.~Giuraniuc, J.~Hatchett, J.~Indekeu, M.~Leone, I.~P. Castillo,
  B.~Van~Schaeybroeck, and C.~Vanderzande.
\newblock Criticality on networks with topology-dependent interactions.
\newblock {\em Physical Review E}, 74(3):036108, 2006.

\bibitem{grimmett1999percolation}
G.~Grimmett.
\newblock {\em Percolation}.
\newblock Grundlehren der Mathematischen Wissenschaften [Fundamental Principles
  of Mathematical Sciences]. Springer, 1999.

\bibitem{gugelmann2012random}
L.~Gugelmann, K.~Panagiotou, and U.~Peter.
\newblock Random hyperbolic graphs: degree sequence and clustering.
\newblock In {\em International Colloquium on Automata, Languages, and
  Programming}, pages 573--585. Springer, 2012.

\bibitem{hao2021graph}
N.~Hao and M.~Heydenreich.
\newblock Graph distances in scale-free percolation: the logarithmic case.
\newblock {\em Journal of Applied Probability}, 60(1):295--313, 2023.

\bibitem{hooyberghs2010biased}
H.~Hooyberghs, B.~Van~Schaeybroeck, A.~A. Moreira, J.~S. Andrade~Jr, H.~J.
  Herrmann, and J.~O. Indekeu.
\newblock Biased percolation on scale-free networks.
\newblock {\em Physical Review E}, 81(1):011102, 2010.

\bibitem{jorritsma2020weighted}
J.~Jorritsma and J.~Komj{\'a}thy.
\newblock Weighted distances in scale-free preferential attachment models.
\newblock {\em Random Structures \& Algorithms}, 57(3):823--859, 2020.

\bibitem{karsai2006nonequilibrium}
M.~Karsai, R.~Juh{\'a}sz, and F.~Igl{\'o}i.
\newblock Nonequilibrium phase transitions and finite-size scaling in weighted
  scale-free networks.
\newblock {\em Physical Review E}, 73(3):036116, 2006.

\bibitem{komjathy2020stopping}
J.~Komj{\'a}thy, J.~Lapinskas, and J.~Lengler.
\newblock Stopping explosion by penalising transmission to hubs in scale-free
  spatial random graphs.
\newblock {\em Annales Institute Henri Poincare: Probabilities et
  Statistiques}, 2021.

\bibitem{komjathy2022one1}
J.~Komj{\'a}thy, J.~Lapinskas, J.~Lengler, and U.~Schaller.
\newblock Four universal growth regimes in degree-dependent first passage
  percolation on spatial random graphs {I}.
\newblock {\em arXiv preprint: arXiv:2309.11840 [math.PR]}, 2023.

\bibitem{komjathy2020explosion}
J.~Komj{\'a}thy and B.~Lodewijks.
\newblock Explosion in weighted hyperbolic random graphs and geometric
  inhomogeneous random graphs.
\newblock {\em Stochastic Processes and their Applications}, 130(3):1309--1367,
  2020.

\bibitem{krioukov2010hyperbolic}
D.~Krioukov, F.~Papadopoulos, M.~Kitsak, A.~Vahdat, and M.~Bogun{\'a}.
\newblock Hyperbolic geometry of complex networks.
\newblock {\em Physical Review E}, 82(3):036106, 2010.

\bibitem{lee2009centrality}
S.~Lee, S.-H. Yook, and Y.~Kim.
\newblock Centrality measure of complex networks using biased random walks.
\newblock {\em The European Physical Journal B}, 68(2):277--281, 2009.

\bibitem{liggett1997domination}
T.~M. Liggett, R.~H. Schonmann, and A.~M. Stacey.
\newblock Domination by product measures.
\newblock {\em The Annals of Probability}, 25(1):71--95, 1997.

\bibitem{lindvall1999strassen}
T.~Lindvall.
\newblock On {S}trassen's theorem on stochastic domination.
\newblock {\em Electronic communications in probability}, 4:51--59, 1999.

\bibitem{miritello2013time}
G.~Miritello, E.~Moro, R.~Lara, R.~Mart{\'\i}nez-L{\'o}pez, J.~Belchamber,
  S.~G. Roberts, and R.~I. Dunbar.
\newblock Time as a limited resource: Communication strategy in mobile phone
  networks.
\newblock {\em Social Networks}, 35(1):89--95, 2013.

\bibitem{penrose2003random}
M.~Penrose et~al.
\newblock {\em Random geometric graphs}, volume~5.
\newblock Oxford university press, 2003.

\bibitem{pu2015epidemic}
C.~Pu, S.~Li, and J.~Yang.
\newblock Epidemic spreading driven by biased random walks.
\newblock {\em Physica A: Statistical Mechanics and its Applications},
  432:230--239, 2015.

\bibitem{schulman1983long}
L.~S. Schulman.
\newblock Long range percolation in one dimension.
\newblock {\em Journal of Physics A: Mathematical and General}, 16(17):L639,
  1983.

\bibitem{slangen19}
T.~Slangen.
\newblock Cost-distances in penalized scale-free random graphs.
\newblock BSc thesis, Eindhoven University of Technology, 2019.

\bibitem{trapman2010growth}
P.~Trapman et~al.
\newblock The growth of the infinite long-range percolation cluster.
\newblock {\em Annals of probability}, 38(4):1583--1608, 2010.

\bibitem{van2017explosion}
R.~van~der Hofstad and J.~Komjathy.
\newblock Explosion and distances in scale-free percolation.
\newblock {\em arXiv preprint arXiv:1706.02597}, 2017.

\bibitem{zlatic2010topologically}
V.~Zlati{\'c}, A.~Gabrielli, and G.~Caldarelli.
\newblock Topologically biased random walk and community finding in networks.
\newblock {\em Physical Review E}, 82(6):066109, 2010.

\end{thebibliography}

\end{document}